\documentclass{amsart}
\usepackage{graphicx} 
\usepackage{amsmath}
\usepackage{amssymb}
\usepackage{amsthm}

\usepackage{tikz}
\usepackage{enumerate} 
\usepackage{hyperref}
\usepackage[dvipsnames]{xcolor}

\newtheorem{theorem}{Theorem}[section]
\newtheorem{lemma}[theorem]{Lemma}
\newtheorem{proposition}[theorem]{Proposition}
\newtheorem{corollary}[theorem]{Corollary}
\newtheorem{question}[theorem]{Question}
\newtheorem{claim}[theorem]{Claim}

\newtheorem{introtheorem}{Theorem}

\newtheorem{introcor}{Corollary}

\theoremstyle{definition}
\newtheorem{definition}[theorem]{Definition}
\newtheorem{assumption}[theorem]{Assumption}

\theoremstyle{remark}
\newtheorem{remark}{Remark}[theorem]

\definecolor{mypurple}{HTML}{A205B3}
\definecolor{myorange}{HTML}{FD7121}
\definecolor{myblue}{HTML}{008DB8}
\definecolor{mygreen}{HTML}{5FB800}
\definecolor{menger1}{HTML}{00e9d8}
\definecolor{menger2}{HTML}{ffe000}
\definecolor{menger3}{HTML}{ff00a1}

\title[Menger Curve and Non-Planar Boundaries of Quotients]{Menger Curve and Non-Planar Boundaries in Quotients of Hyperbolic Groups}
\author{Rafaela Ioannou}
	\address{Maxwell Institute and Department of Mathematics, Heriot-Watt University,     Edinburgh, UK}
	\email{ri4005@hw.ac.uk}

\newcommand{\G}{\Gamma}
\newcommand{\V}{\mathcal{V}}
\newcommand{\Psub}{\mathcal{P}}
\newcommand{\diam}{\mathrm{diam}}
\newcommand{\e}{\varepsilon}
\newcommand{\la}{\lambda}
\newcommand{\bdry}{\partial}
\newcommand{\gbar}{\hat{G}}
\newcommand{\gbardef}{\hat{G}=G\big/ \langle\langle g^n \rangle\rangle}

\newcommand{\normal}{\unlhd}

\begin{document}
\begin{abstract}
We study the persistence of non-planarity in boundaries of hyperbolic groups under quotients, focusing on quotients by large enough powers of infinite order elements. We prove that the non-planarity of the boundary of a one-ended hyperbolic group is preserved under such quotients. As a consequence, we obtain an analogous result about the persistence of Menger curve boundaries. 
%
\end{abstract}
\maketitle
\section{Introduction}
The Menger curve comes up naturally when discussing topological spaces as boundaries of hyperbolic groups. Indeed, by \cite[Theorem 4]{kapovichkleiner} if a group $G$ is hyperbolic with one-dimensional boundary and does not split non-trivially over any elementary subgroup, then one of the following holds:
  \begin{enumerate}
   \item The boundary $\bdry G$ is a Menger curve,
   \item $\bdry G$ is a Sierpi{\'n}ski carpet,
   \item $\bdry G$ is homeomorphic to the circle $\mathbb{S}^1$.
  \end{enumerate}
  Out of the three options, only the Menger curve is non-planar, meaning it does not embed as a subset of the sphere. Hence, if a hyperbolic group satisfies the above conditions and its boundary is not planar, then it is homeomorphic to the Menger curve.
 
 Hyperbolic groups like this exist in abundance; for example, a ``generic" finitely presented group (Gromov density model) is hyperbolic with Menger curve boundary \cite{dgp}. In fact, Menger curves naturally appear as boundaries of groups arising in a number of settings in addition to random groups: examples include small cancellation groups \cite[Theorem 4.18]{champetier1995proprietes}, hyperbolic right-angled Coxeter groups \cite{danielski2022right}, or Bourdon buildings \cite{benakliPolyedresAvecNombre1993,dymaraBoundariesRightangledHyperbolic2007}.
 
 The work in this paper is motivated by the following question: does the property of having a Menger curve boundary pass to hyperbolic quotients? As a step toward this, we study quotients by cyclic subgroups. We prove the following, which follows from a more general result on the topology of the boundary (see Theorem \ref{maintheorem} below):
 \begin{introtheorem}\label{maincorollary}
  Let $G$ be a hyperbolic, torsion-free group, such that the Gromov boundary $\bdry G$ is a Menger curve. Let $g\in G$ be an infinite order element. Then, for all large enough $n$, $\gbar = G\big/ \langle\langle g^n \rangle \rangle$ is hyperbolic and has Menger curve boundary. 
 \end{introtheorem}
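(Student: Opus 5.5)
The plan is to reduce Theorem~\ref{maincorollary} to the Kapovich--Kleiner trichotomy quoted above. We show that $\gbar$ is hyperbolic, torsion-free, one-ended with one-dimensional boundary, does not split over elementary subgroups, and has non-planar boundary; then $\bdry\gbar$ must be a Menger curve.

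Hyperbolicity for large $n$ is standard small cancellation over hyperbolic groups (Olshanskii, Delzant, Gromov). Since $G$ is torsion-free, we may replace $g$ by a root generating the maximal cyclic subgroup $E(g)$; note that $\langle\langle g^n\rangle\rangle$ only involves powers of this root. For $n$ large, $\{g^n\}$ then satisfies the $C'(\lambda)$ condition, so $\gbar$ is hyperbolic, $E(g)\to\gbar$ has image $\mathbb{Z}/n$, and the only torsion in $\gbar$ is conjugate into that finite cyclic image. In particular, the torsion-free hypothesis on $G$ does not pass to $\gbar$, and this has to be handled.

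Next we need that $\bdry\gbar$ is one-dimensional and connected without local cut points, and that $\gbar$ does not split over elementary (virtually cyclic or finite) subgroups. For dimension and connectedness I would use the small cancellation geometry: quasi-convex pieces of $G$ of radius $\ll n|g|$ embed in $\gbar$. Also $\gbar$ is the fundamental group of a complex of groups obtained by coning off the axes of conjugates of $g^n$, and the cohomological dimension computation (Dymara--Osajda type, or Champetier's arguments for small cancellation quotients) bounds $\dim\bdry\gbar\le 1$. For the splitting claim, I would argue that any local cut point or splitting of $\gbar$ would pull back to $G$ via the Bowditch JSJ, contradicting the absence of local cut points in the Menger curve $\bdry G$. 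Since $\gbar$ has torsion, I would invoke the version of Kapovich--Kleiner for hyperbolic groups with torsion (their theorem does not need torsion-freeness), or pass to a torsion-free finite index subgroup if one exists.

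The main step, and the expected obstacle, is non-planarity of $\bdry\gbar$; this is what the announced more general Theorem~\ref{maintheorem} should give. Non-planarity of $\bdry G$ gives an embedded $K_{3,3}$ (or $K_5$) in $\bdry G$, by Claytor's characterization for Peano continua. By ``zooming'' with the group action, I would choose this graph to be a small finite configuration of boundary points together with arcs carried by quasi-geodesic ladders in a bounded region of the Cayley graph. Then I would show that a uniform neighbourhood of this configuration maps quasi-isometrically into $\gbar$ for $n$ large, using the Greendlinger-type lemma that relator cones are far away. The difficulty is transferring an embedded non-planar graph in the boundary, which is an object at infinity, rather than a finite subset. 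To handle this, I would first try to find a quasi-convex subgroup $H\le G$ with planar-obstructing boundary (for example a subgroup whose boundary contains $K_{3,3}$ robustly) that avoids the conjugates of $g^n$, and that therefore injects into $\gbar$ as a quasi-convex subgroup for large $n$. Then $\bdry H\hookrightarrow\bdry\gbar$, which forces non-planarity.
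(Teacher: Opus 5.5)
Your reduction to Kapovich--Kleiner (hyperbolicity, no elementary splittings, one-dimensional boundary, non-planarity) is the same as the paper's. You are also right that torsion in $\gbar$ is harmless there. But the step you flag as the main one is never carried out, and neither of your two strategies works as stated.

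The points of $\bdry G$ that die in $\gbar$ are the limit points $hg^{\pm\infty}$. This set is $G$-invariant, hence dense in $\bdry G$. A ray toward a point near such an obstacle fellow-travels the coset $hE(g)$ for a long stretch, arbitrarily far from the basepoint, and that stretch collapses in $\gbar$. So neither the ``zooming'' choice of a small $K_{3,3}$ nor quasi-isometric control on a bounded region of the Cayley graph shows the graph survives. What is needed is a uniform bound on how long every ray representing a point of the graph can run along any coset $hE(g)$, i.e.\ uniform transversality. A generic embedded graph does not have this.

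Your fallback needs a quasiconvex $H\le G$ with non-planar boundary meeting no conjugate of $E(g)$ nontrivially. Such an $H$ must have infinite index, since a finite-index subgroup contains a power of $g$. Nothing in the hypotheses, and no known general result, produces it, so this trades the theorem for a harder problem.

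The missing idea, which is what the paper does, is to move the graph inside $\bdry G$ itself:
\begin{enumerate}
\item Mackay--Sisto show the obstacle family $\V$ is separated, porous and avoidable at the scales $D(V)=e^{-\varepsilon d_V}$. Hence each edge can be replaced by a nearby quasi-arc that $\lambda$-avoids $\V$.
\item Points that $\lambda$-avoid $\V$ are represented by uniformly transversal rays. These project to uniform quasi-geodesics in $\gbar$ for large $n$ (Mackay--Sisto, Prop.~2.12), giving an injective continuous map from the avoiding set into $\bdry\gbar$.
\item One must then remove the new intersections the perturbation creates near the vertices. This is why the paper only recovers $K_{3,3}$ as a minor, using valence at most $4$.
\end{enumerate}

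Your no-splitting argument also fails. A splitting of $\gbar$ over an elementary $\bar E$ gives an action of $G$ on the Bass--Serre tree with edge stabilisers $\pi^{-1}(\bar E)\supseteq\langle\langle g^n\rangle\rangle$. These are not elementary, so nothing about $G$ is contradicted. There is also no map $\bdry\gbar\to\bdry G$ along which to pull back local cut points. The paper instead uses the Groves--Manning theorem: sufficiently long Dehn fillings of a relatively hyperbolic pair without elementary splittings have none. It applies this to $(G,E(g))$, where $E(g)$ is cyclic because $G$ is torsion-free, so $\langle g^n\rangle\unlhd E(g)$ for every $n$. The hypothesis that $G$ has no elementary splitting comes from their Corollary 7.9, since the Menger curve has no cut points.

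Finally, your dimension sketch via coning off axes is a plausible alternative to the paper's route (Wang's spectral sequence for Dehn fillings plus Bestvina--Mess). As written, though, it rests on an unproved contractibility of the coned-off complex. Since $\gbar$ has torsion, the bound must also be phrased as $H^k(\gbar;\mathbb{Z}\gbar)=0$ for $k\ge 3$ rather than as a cohomological dimension. The Dymara--Osajda and Champetier references do not supply this for quotients of an arbitrary hyperbolic group.
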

A more technical version of the theorem covers groups with torsion, see \ref{maincortorsion}.

 Such quotients have been well-studied, for example, their hyperbolicity was shown in \cite{delzant, olshanskii, gromov1987hyperbolic}. To show that $\bdry \gbar$ is non-planar, it suffices to show that there exists a non-planar graph embedded in the boundary. In turn, to show that a graph is non-planar, it suffices to show that it contains either the complete bipartite graph $K_{3,3}$, or the complete graph $K_5$ (Figure \ref{fig:K33}) as a minor; meaning that we can obtain the $K_{3,3}$ or $K_5$ by contracting edges, deleting vertices or edges of the original graph. This follows directly from a classical result in graph theory called Kuratowski's Theorem \cite{Kuratowski1930}. The Menger curve is \textit{universal}, and as a consequence contains every graph, in particular both $K_{3,3}$ and $K_5$ \cite{menger}.
\begin{figure}
\centering
\scalebox{1.4}{ \begin{tikzpicture}
    \draw (0,-1) -- (2,-1);
     \draw (0,-1) -- (2,0);
      \draw (0,-1) -- (2,1);
       \draw (0,0) -- (2,-1);
     \draw (0,0) -- (2,0);
      \draw (0,0) -- (2,1);
       \draw (0,1) -- (2,-1);
     \draw (0,1) -- (2,0);
      \draw (0,1) -- (2,1);
            \filldraw[myblue] (0,-1) circle (1.42pt);
            \filldraw[myblue] (0,0) circle (1.42pt);
            \filldraw[myblue] (0,1) circle (1.42pt);
            \filldraw[myblue] (2,-1) circle (1.42pt);
            \filldraw[myblue] (2,0) circle (1.42pt);
            \filldraw[myblue] (2,1) circle (1.42pt);
    \end{tikzpicture}
    }
\qquad
\begin{tikzpicture}
\draw (0,0) -- (2,0);
\draw (0,0) -- ({2*cos(108)}, {2*sin(108)});
\draw (2,0) -- ({2+2*cos(72)}, {2*sin(72)});
\draw ({2*cos(108)}, {2*sin(108)}) -- (1,{2*((sin(108)+sin(36))});
\draw ({2+2*cos(72)}, {2*sin(72)}) -- (1,{2*((sin(108)+sin(36))});
\draw (1,{2*((sin(108)+sin(36))}) -- (0,0);
\draw (1,{2*((sin(108)+sin(36))}) -- (2,0);
\draw (0,0) -- ({2+2*cos(72)}, {2*sin(72)});
\draw (2,0) -- ({2*cos(108)}, {2*sin(108)});
\draw ({2*cos(108)}, {2*sin(108)}) -- ({2+2*cos(72)}, {2*sin(72)});
\filldraw[myblue] (0,0) circle (2pt);
\filldraw[myblue] (2,0) circle (2pt);
\filldraw[myblue] ({2*cos(108)}, {2*sin(108)}) circle (2pt);
\filldraw[myblue] ({2+2*cos(72)}, {2*sin(72)}) circle (2pt);
\filldraw[myblue] (1,{2*((sin(108)+sin(36))}) circle (2pt);
\end{tikzpicture}
    \caption{The graphs $K_{3,3}$ and $K_5$.}
    \label{fig:K33}
\end{figure}
With that in mind, our main, more general result on the topology of the boundary gives a way of preserving the non-planarity of a graph under taking the quotient:
\begin{introtheorem}\label{maintheorem}
    Let $G$ be a torsion-free hyperbolic group. 
    Let $\G$ be a finite simplicial graph of valence at most 4, such that $\G$ embeds into $\bdry G$. Then for all infinite order elements $g\in G$, for all sufficiently large $n$, we have that $\gbar=G\big/ \langle\langle g^n\rangle\rangle$ is hyperbolic, 
    and there exists a graph $\hat{\G}$ that topologically embeds into $\bdry \gbar$ and contains $\G$ as a minor.
\end{introtheorem}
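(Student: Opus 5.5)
The plan is to build the graph $\hat\G$ in $\bdry\gbar$ from quasi-geodesic rays in $G$ that survive the quotient, using small cancellation theory for $\langle\langle g^n\rangle\rangle$ in the sense of Delzant, Olshanskii and Gromov. For $n$ large, $\gbar$ is hyperbolic, and the quotient map is injective on balls of radius $R(n)\to\infty$. More precisely, the Cayley graph of $\gbar$ is obtained from a cone-off of $G$ along the translates of the axis of $g$. Any path in $G$ that has only short overlaps with translates of that axis, say overlaps of length much less than $n\,\|g\|$, maps to a uniform quasi-geodesic in $\gbar$. This is the Greendlinger-type lemma for rotating families / very small cancellation.

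First I discretise the embedding of $\G$. Each vertex $v$ of $\G$ becomes a boundary point $\xi_v$, and each edge becomes an arc in $\bdry G$. I approximate each arc by a finite chain of boundary points that are $\e$-close in a visual metric. Then I realise the vertex points and the arcs as unions of geodesic rays from the identity, which meet only along prescribed initial segments. The valence bound $4$ lets me choose these combinatorial "fans" of at most $4$ rays at each vertex point. At that point I keep only finitely many branching rays, so the picture is governed by a finite tree-like configuration up to some radius $r$, plus boundary data.

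Second, I perturb this configuration so that it avoids long fellow-travelling with conjugates of $g$. I use the fact that the limit set of $\langle g\rangle$ and its translates form a null family of pairs of points. So for $n$ large, only translates of the axis whose shadows are small matter, and I can reroute each arc around the two points $h g^{\pm\infty}$ inside small neighbourhoods. This is possible because $\bdry G$ is locally connected and has no local cut points of this kind. The resulting rays are then "$n$-reduced" and project to quasi-geodesic rays in $\gbar$. That gives a continuous map from a graph $\hat\G$ (a subdivision of $\G$, possibly with extra short detour edges) into $\bdry\gbar$.

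Third, I prove injectivity. Distinct endpoints of the rays stay distinct in $\bdry\gbar$ because reduced quasi-geodesics that fellow-travel in $\gbar$ must already fellow-travel in $G$, by lifting via the small cancellation lemma. After deleting or contracting the detours, this recovers $\G$ as a minor. The main obstacle is this rerouting-plus-injectivity step: the infinitely many translates of the axis of $g$ can accumulate on the arcs, and I must make the detours uniformly, with $n$ chosen only after the finite data. I would first try to control this with a compactness argument in $\bdry G$ together with the quasi-convexity constants of $\langle g\rangle$, and fall back on doing the detours at a single scale using a $\delta$-thin-quadrilateral argument.
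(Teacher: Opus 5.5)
Your outline has the same skeleton as the paper's proof. Rays with uniformly bounded overlap with the cosets of $E(g)$ map to quasi-geodesics in $\gbar$ (Proposition \ref{qgeodesictransversal}); you then reroute the graph so all its points have such rays, check injectivity, and extract $\G$ as a minor. However, the step that carries the weight, the rerouting, is not supplied, and neither of your fallbacks can work.

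The obstacle set $G\cdot\{g^{\pm\infty}\}$ is dense in $\bdry G$. Since $n$ is chosen afterwards, you need a \emph{uniform} bound on the overlap of every ray $\gamma_p$, $p$ on the new graph, with \emph{every} coset $hE(g)$. On the boundary this means $p$ stays at distance at least a fixed multiple of $D(V)=e^{-\e d_V}$ from every $V=\{hg^{\pm\infty}\}$, which is a condition at all scales simultaneously. Compactness only handles the finitely many $V$ above a given scale. A single-scale detour ignores the infinitely many smaller obstacles, and each detour runs into new, smaller ones. What you need is an iteration with geometrically shrinking perturbations whose limit is still an arc close to the original.

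That is exactly what the paper imports. Mackay--Sisto give that $\V$ is $L$-separated, $L$-porous and $L$-avoidable with respect to $D$ (Proposition \ref{lem:mssep}, Corollary \ref{Egconditions}). Mackay's iteration (Proposition \ref{Mackay2.1Prop}, Lemma \ref{Mackay2.2}) then produces a $\lambda$-quasi-arc that avoids $\V$ and follows the original edge (Theorem \ref{ms7.2}, Lemma \ref{quasiarcs}).

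That control is reused at the vertices, which your plan also leaves open: a vertex may itself be an obstacle and must move, and the perturbed edges at it can cross each other. In the paper this is handled in three steps:
\begin{enumerate}
\item Lemma \ref{lem:creatine} keeps intersections of adjacent edges inside $N_{d/C}$ of the common vertex.
\item The $\lambda$-quasi-arc bound keeps the middle of each edge out of these neighbourhoods.
\item The last-intersection-point construction of Proposition \ref{tren} creates the short extra edges whose contraction gives $\G$.
\end{enumerate}
Finitely many ``fans'' of rays do not determine arcs, and ``deleting or contracting the detours'' has nothing to act on without this control.

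Second, your claim that $\bdry G$ ``has no local cut points of this kind'' does not follow from the hypotheses as stated. The paper proves the result as Theorem \ref{maintheoremtorsion}, assuming that $G$ is one-ended and does not split over any elementary subgroup. It obtains the absence of local cut points from Theorem \ref{gm7.9}, and avoidability of $\V$ rests on this.

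Without such an assumption both your step and the conclusion fail. Take three compact hyperbolic surfaces $S_1,S_2,S_3$, each with one boundary curve, glued along that curve $c$; let $G$ be the fundamental group and $g=c$.
\begin{itemize}
\item $G$ is torsion-free, hyperbolic and one-ended.
\item $\bdry G$ contains a subdivided theta graph with vertices $c^{\pm\infty}$, with one edge in each of the three branches at the axis of $c$. Each edge is half of the boundary circle of a closed-surface subgroup $\pi_1(S_i\cup_c S_j)$.
\item $\gbar$ is an amalgam of three closed hyperbolic orbifold groups over $\mathbb{Z}/n$. So every component of $\bdry\gbar$ is a circle or a point, and no graph with a theta minor embeds there, for any $n\geq 2$.
\end{itemize}
Rerouting around $c^{\pm\infty}$ is genuinely impossible here. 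Your argument, like the paper's, must assume that $G$ has no elementary splittings.
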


As with Theorem \ref{maincorollary}, a more technical version of this is stated in Theorem \ref{maintheoremtorsion} and covers groups with torsion.

In fact, the above theorem does not only imply non-planarity in the case of the boundary $\bdry G$ being a Menger curve. Indeed, as long as $G$ is one-ended, its boundary has no global cut points \cite{bowditch1998cut,swarup1996cut}. By \cite[Page 835]{claytor1934topological}, a locally connected boundary with no cut points is non-planar if and only if it contains a non-planar graph. We obtain that if $G$ has non-planar boundary, so does $\gbar$:

\setcounter{introcor}{2}
\begin{introcor}\label{generalnonplanar}
Let $G$ be a one-ended, torsion-free hyperbolic group with non-planar boundary. Let $g\in G$ be an infinite order element. Then, for all large enough $n$, the hyperbolic group $\gbar = G\big/ \langle\langle g^n\rangle\rangle$ also has non-planar boundary.
\end{introcor}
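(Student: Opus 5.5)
The plan is to combine Theorem \ref{maintheorem} with the cut-point results quoted just before the statement. Since $\bdry G$ is non-planar, I first need a non-planar finite graph of valence at most $4$ embedded in $\bdry G$. Because $G$ is one-ended and hyperbolic, $\bdry G$ is connected and locally connected (Bestvina--Mess, Bowditch, Swarup), and it has no global cut points \cite{bowditch1998cut,swarup1996cut}. Claytor's criterion \cite{claytor1934topological} then says $\bdry G$ contains a non-planar graph $\G_0$. By Kuratowski's Theorem \cite{Kuratowski1930}, $\G_0$ contains a subdivision of $K_{3,3}$ or $K_5$: a topological minor, not just a minor, since we may pass to a homeomorphic subgraph. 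This subgraph is embedded in $\bdry G$. Subdividing edges where needed, I get a finite simplicial graph $\G$ embedded in $\bdry G$ that is a subdivision of $K_{3,3}$ (valence $\le 3$) or of $K_5$ (valence $\le 4$). Either way the valence hypothesis of Theorem \ref{maintheorem} holds.

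Next I apply Theorem \ref{maintheorem} to $G$, $\G$ and the infinite order element $g$. For all sufficiently large $n$, the quotient $\gbar$ is hyperbolic and some graph $\hat\G$ topologically embeds in $\bdry\gbar$ with $\G$ as a minor. Since $\G$ in turn has $K_{3,3}$ or $K_5$ as a minor (contract the subdivision edges), and the minor relation is transitive, $\hat\G$ has $K_{3,3}$ or $K_5$ as a minor. By Wagner's form of Kuratowski's theorem, $\hat\G$ is therefore non-planar. If $\bdry\gbar$ embedded in $S^2$, then so would $\hat\G$, which is a contradiction. Hence $\bdry\gbar$ is non-planar. This direction only uses that subspaces of planar spaces are planar; it does not need $\gbar$ to be one-ended.

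The main point requiring care is the first step. I must check that Claytor's theorem applies, which means verifying that $\bdry G$ is a Peano continuum without cut points. I must also make sure the embedded graph can be taken finite, simplicial and of valence at most $4$; the reduction to a Kuratowski subdivision handles this. A minor subtlety is the case where $\bdry G$ is a single point or empty, but one-endedness excludes it. Everything else is a formal consequence of Theorem \ref{maintheorem}.
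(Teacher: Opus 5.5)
Your argument is correct and has the same skeleton as the paper's proof of Corollary \ref{corctorsion}. One-endedness makes $\bdry G$ a Peano continuum without cut points, Claytor's criterion yields a non-planar graph in $\bdry G$, and the main theorem transports a graph with that minor into $\bdry\gbar$. You differ from the paper at both ends, in your favour.

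At the start, you pass explicitly to an embedded subdivision of $K_{3,3}$ or $K_5$, which secures the valence $\le 4$ hypothesis of Theorem \ref{maintheorem}. The paper applies the theorem to an unspecified non-planar graph $\G$ and leaves this point implicit.

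At the end, you simply note that a space containing a non-planar graph cannot embed in $S^2$. The paper instead re-establishes that $\bdry\gbar$ is connected without cut points, via Theorem \ref{gm7.9}, Corollary \ref{nosplitbdry} and the fact that $\gbar$ is not virtually Fuchsian. It does this in order to apply Claytor's criterion a second time. Only the trivial direction is needed there, so your version needs nothing about $\gbar$ beyond the conclusion of the main theorem.

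One caveat applies equally to both arguments. Theorem \ref{maintheorem} is stated in the introduction for every torsion-free hyperbolic $G$. The version actually proved, Theorem \ref{maintheoremtorsion}, assumes that $G$ admits no elementary splitting, because the porosity and avoidability in Corollary \ref{Egconditions} require $\bdry G$ to have no local cut points. One-endedness only excludes global cut points; a one-ended group may still split over a cyclic subgroup. The paper tries to derive the missing hypothesis from Theorem \ref{gm7.9}, but that theorem needs the absence of local cut points as input. By using the main theorem as a black box, your proof inherits this dependence rather than resolving it.
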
 

Again, a more technical version of this result also holds for torsion groups, see Corollary \ref{corctorsion}.

Returning to the Menger curve, it remains to show that if $\bdry G$ is one-dimensional, so is $\bdry \gbar$. The dimension of the boundary is related to cohomological properties of the group, by work of Bestvina and Mess in \cite{bestvinamess}. The quotient group can be realised as a group theoretic Dehn filling, the cohomological properties of which have been studied by Wang in \cite{wang2021spectral} and Petrosyan and Sun in \cite{petrosyan2024cohomology}.

If $G$ is torsion-free, we could deduce one-dimensionality from either paper. For technical reasons, the results from \cite{petrosyan2024cohomology} do not allow us to conclude one-dimensionality in general, so we use \cite{wang2021spectral}.

This allows for an even more general version of the main result, allowing for $G$ to have torsion; we refer the reader to the results in Section \ref{sec:proofs}.
\subsection{Open questions}
We have essentially shown that, if $\bdry G$ is a Menger curve, the groups $G$ and $\gbar$ as in Theorem \ref{maincorollary} have homeomorphic boundaries, which could suggest a quasi-isometry between the groups, as no standard invariant can differentiate between them:
\begin{question}
    Can the groups $G$ and $\gbar$ in Theorem \ref{maincorollary} be quasi-isometric to each other for infinitely many $n$?
\end{question}
Another question the paper poses is regarding the non-planarity of the quotient boundary. If $\bdry G$ is homeomorphic to, say, the Sierpi{\'n}ski carpet, there is no suggestion in this paper that $\bdry \gbar$ is also planar.
\begin{question}
    Let $G$ be a hyperbolic, torsion-free group with one-dimensional boundary, and let $\gbar$ be defined as in Theorem \ref{maincorollary}. When does the quotient group $\gbar$ have non-planar boundary? 
\end{question}
Finally, one could hope to generalise this work to apply to a wider array of quotient groups:
\begin{question}
    Do Theorems \ref{maincorollary} and \ref{maintheorem} still hold for $\gbar = G\big/ \langle \langle g \rangle \rangle$, where the element $g$ satisfies some small cancellation condition?
\end{question}
The machinery of Dehn filling pairs, that we use to assert the lack of elementary splittings in the quotient group, does not apply to this choice of quotient, however this does not necessarily imply that a version of the theorems in this paper cannot hold. As a starting point, one could generalise a version of Wang's results in \cite{wang2021spectral} for this choice of $\gbar$, hence asserting the one-dimensionality of the quotient boundary.

\subsection{Outline}
Section \ref{sec:2} asserts the lack of elementary splittings of $\gbar$. To achieve this, we view the theorem's setup as a group-theoretic Dehn filling; this allows the use of machinery on the behaviour of elementary splittings under Dehn fillings from \cite{grovesmanning}.

 To ensure that a graph embedded in $\bdry G$ appears in $\bdry \gbar$, it must remain sufficiently far from points in $\bdry G$ represented by geodesic rays which don't survive the quotient. Namely, these are the orbit of the limit points of the infinite order element $g$ in $\bdry G$. The aim of Section \ref{quasiarcssection} is to obtain a way to shift arcs away from a collection of ``well-behaved" obstacles. To achieve this we make use of results from \cite{mackaysisto}. A significant part of this section is to ensure that we have substantial control of the resulting shifted arc. 

In Section \ref{sec:obstacles} we verify that the work done in the previous section can be applied to arcs in $\bdry G$. This ensures that we can shift arcs in $\bdry G$ away from the obstacles given by the orbit of limit points of $g$.

In Section \ref{bdrymapsection} we construct a ``partial map" between the two boundaries $\bdry G$ and  $\bdry \gbar$, and show that graphs which ``stay sufficiently far" from the obstacles introduced above will appear in the quotient boundary. 

The crux of the proof, and our main contribution, can be found in Section \ref{sec:6}. First, we show that we can shift any graph away from the collections of obstacles in $\bdry G$, 
while keeping any resulting intersections within arbitrarily small neighbourhoods around the vertices. The main result of the section, Proposition \ref{tren}, proves that, given any finite simplicial graph $\G$ of valence at most 4, we can shift it to find a graph $\G'$ which not only avoids the collection of obstacles, but also contains $\G$ as a minor. The main technique we use is to control the intersections, which are contained within the neighbourhoods around the vertices, and minimise them, finding new vertices and edges included in the shifted version of the original graph.

The proofs of the main results can be found in Section \ref{sec:proofs}. The theorems stated in this section are generalised in a way that allows the group $G$ to have torsion, however, torsion restricts our choice of $n$ in Theorems \ref{maincorollary} and \ref{maintheorem} and Corollary \ref{generalnonplanar}. 
\subsection{Notation} 
Throughout this paper, letting $G$ be a group, $\langle\langle g \rangle \rangle$ denotes the normal closure in $G$ of an element $g\in G$. Where $G$ is a hyperbolic group, $\bdry G$ denotes the Gromov boundary of $G$.

For a finitely generated group $G$, when we say $d_{G}(\cdot,\cdot)$ we mean the distance in the Cayley graph of $G$ for a fixed finite generating set $S$.

In a metric space $Z$, $B(z,n)$ denotes the ball of radius $n$ around an element $z\in Z$. For a set $X \subseteq Z$, $N(X,n)$ denotes the $n$-neighbourhood around $X$,

Given an arc $J$ in some metric space $(Z,\rho)$, $J[x,y]$ denotes the subarc of $J$ with endpoints $x,y\in Z$.
\subsection{Acknowledgments} The author would like to thank Nansen Petrosyan and Bin Sun for very helpful conversations on group cohomological dimension, as well as Matthew Cordes and Alessandro Sisto for their constructive comments and suggestions. 

\section{Splittings over elementary subgroups}\label{sec:2}
The behaviour of elementary splittings under group-theoretic Dehn fillings has already been studied by Groves and Manning in \cite{grovesmanning}; we shall outline their relevant results which can be applied to our current setting. Let $G$ be a hyperbolic group and $g\in G$ an infinite order element. Let $E(g)$ denote the unique, maximal elementary subgroup containing the hyperbolic element $g$, see Theorem \ref{elementarysubgroup}. The main result of this section is Corollary \ref{nosplitbdry}, which states that if a hyperbolic group $G$ does not split over $E(g)$ for any infinite order element $g\in G$, for sufficiently large $n$, neither does $\gbardef$. This is done by viewing the quotient group $\gbar$ as a group-theoretic Dehn filling of $G$.  

A somewhat surprising setting we use for both this section as well as some results in Section 4 is relative hyperbolicity. Even though $G$ and $\gbar$ are both hyperbolic, which is a much stronger assumption, the reason we use the machinery of relative hyperbolicity is that the tools used when studying relatively hyperbolic groups are particularly useful in this setting, since it allows us to study the hyperbolic properties of a \textit{pair} of a group and a collection of subgroups. We begin this section with the required results on relatively hyperbolic groups.
\subsection{Relative hyperbolicity} 
Groves and Manning's work on elementary splittings requires the group $G$ to be relatively hyperbolic with respect to a collection of subgroups $\Psub$. There are numerous definitions and approaches to studying relative hyperbolicity. We refer the reader to \cite{mackaysisto2024maps} and \cite{sisto2012metric} for a more explicit study of relatively hyperbolic groups.

It turns out, $G$ is hyperbolic relative to the subgroup $E(g)$ for any $g\in G$ of infinite order. We first provide a more detailed description of $E(g)\leq G$.
\begin{theorem}\cite{osin2006elementary}\label{elementarysubgroup}
    Let $G$ be a hyperbolic group, and $g\in G$ a hyperbolic (or, equivalently, infinite order) element. Then $g$ is contained in a unique, maximal, elementary subgroup $E(g)$. In addition, $G$ is relatively hyperbolic with respect to the collection $\Psub$ containing only $E(g)$ for some infinite order element $g\in G$.
\end{theorem}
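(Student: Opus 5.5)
To prove Theorem \ref{elementarysubgroup}, I would split it into two parts: first the existence and uniqueness of a maximal elementary subgroup $E(g)$ containing $g$, and then the relative hyperbolicity of $G$ with respect to $\Psub=\{E(g)\}$.

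For the first part, I would use the dynamics of $g$ on $\bdry G$. Since $g$ has infinite order, it acts loxodromically and has exactly two fixed points $g^{\pm\infty}\in\bdry G$. Define
\[
E(g)=\{h\in G : h\{g^{+\infty},g^{-\infty}\}=\{g^{+\infty},g^{-\infty}\}\}.
\]
I would check three facts in turn.
\begin{enumerate}[(i)]
\item $E(g)$ is a subgroup containing $g$. This is immediate.
\item $E(g)$ is elementary. Using thinness of quasi-geodesics, each $h\in E(g)$ moves a quasi-axis $\ell$ of $g$ to within bounded Hausdorff distance of itself. Then $E(g)$ acts properly and coboundedly on a neighbourhood of $\ell$, so it is virtually cyclic. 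Equivalently, one can use the standard description $E(g)=\{h: hg^kh^{-1}=g^{\pm k}\text{ for some }k\neq 0\}$.
\item $E(g)$ contains every elementary subgroup $H$ with $g\in H$. An infinite virtually cyclic subgroup has exactly two limit points. Since $g\in H$, these limit points must be $g^{\pm\infty}$, and $H$ preserves its own limit set. Hence $H\le E(g)$.
\end{enumerate}
Maximality and uniqueness both follow from (iii).

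For the relative hyperbolicity, I would show that $E(g)$ is \emph{almost malnormal} and \emph{quasiconvex}, and then invoke Bowditch's criterion: a hyperbolic group is hyperbolic relative to any almost malnormal family of quasiconvex subgroups. This is also \cite[Theorem 7.11]{bowditch2012relatively}-type material, recovered in \cite{osin2006elementary}.
\begin{enumerate}[(i)]
\item Quasiconvexity is standard, since $E(g)$ is virtually cyclic and hence quasi-isometric to the axis $\ell$.
\item For almost malnormality, suppose $x\notin E(g)$ but $|xE(g)x^{-1}\cap E(g)|=\infty$. Then the intersection contains an infinite order element. Its fixed pair in $\bdry G$ is simultaneously $\{g^{\pm\infty}\}$ and $x\{g^{\pm\infty}\}$, which forces $x\in E(g)$, a contradiction.
\end{enumerate}

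The main obstacle is the relative hyperbolicity criterion itself. Verifying fine-ness of the coned-off Cayley graph, or bounded coset penetration, directly from the definitions would be lengthy. I would therefore cite Bowditch's or Osin's result rather than reprove it, so that the real work reduces to the two checks on $E(g)$ above.
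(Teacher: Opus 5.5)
Your proposal is correct. The paper gives no argument for this statement and simply cites \cite{osin2006elementary}.

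\textbf{Comparison with the cited proof.} Osin works in the broader setting of a group hyperbolic relative to a family $\{H_\lambda\}$, with $g$ an infinite order element not conjugate into any $H_\lambda$. He defines $E(g)$ algebraically as $\{f\in G : f^{-1}g^nf=g^{\pm n} \text{ for some } n\geq 1\}$ and shows it is the unique maximal elementary subgroup containing $g$. He then shows it can be added to the peripheral structure by checking three things: relative quasiconvexity, almost malnormality, and finite intersection with conjugates of the $H_\lambda$.

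Your argument is that outline specialised to an empty peripheral family, with two substitutions. You define $E(g)$ dynamically as the stabiliser of $\{g^{+\infty},g^{-\infty}\}$, and you use Bowditch's criterion \cite{bowditch2012relatively} in place of Osin's.

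Osin's route buys a generality the paper never needs. Yours is shorter and uses only standard facts about loxodromic elements acting on $\partial G$. It also makes transparent that the limit set of $hE(g)$ is $h\{g^{\pm\infty}\}$, which is exactly how Definition \ref{obstacles} describes the obstacles. It proves the conclusion for every infinite order $g$, which is what the paper actually uses, not just for ``some'' $g$ as the statement is literally worded.

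\textbf{One step to tighten.} In (ii), $E(g)$ does not literally preserve a neighbourhood of $\ell$. The clean version runs as follows.
\begin{enumerate}[(a)]
\item Each $h\ell$ with $h\in E(g)$ is a quasi-geodesic with the same constants and endpoints as $\ell$. By the Morse lemma, $h\ell\subseteq N(\ell,C)$ for a constant $C$ independent of $h$.
\item Hence the $E(g)$-orbit of a point of $\ell$ lies in $N(\ell,C)$.
\item Along $\ell$, the subgroup $\langle g\rangle$ is coarsely dense.
\item Properness of the action then gives $[E(g):\langle g\rangle]<\infty$.
\end{enumerate}
The same uniform bound, namely that $E(g)$ lies at finite Hausdorff distance from $\ell$, is what gives quasiconvexity in your relative hyperbolicity step. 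Being abstractly quasi-isometric to $\mathbb{Z}$ would not suffice on its own.
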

 Recall that a subgroup is called elementary if it contains a cyclic subgroup of finite index, therefore it is virtually cyclic. 

The above theorem allows for the use of results on relative hyperbolicity on the pair $(G,E(g))$. In particular, in what follows, this will allow us to deduce that $\gbar$ admits no non-trivial elementary splittings.
\subsection{Elementary splittings}
The quotient group $\gbardef$ in Theorems \ref{maincorollary} and \ref{maintheorem} can be realised as a \textit{Dehn filling} of the group $G$, a group-theoretic analogue of Dehn fillings in 3-manifolds. Splittings of groups under Dehn fillings have been studied, and non-trivial elementary splittings persist under the quotient. We begin by introducing the concept of group fillings.
\begin{definition}\cite{osinPeripheralFillingsRelatively2007,grovesDehnFillingRelatively2008}\label{def:dehn}
Suppose that $G$ is a group and $\Psub = \{ P_1 ,\ldots , P_n \}$ is a collection of subgroups.  A Dehn filling (or just filling) of $(G,\Psub)$ is a quotient map: $\pi \colon  G \to G/K$, where $K$ is the normal closure in $G$ of some collection $K_i \unlhd P_i$.  We write
 \[     G/K = G(K_1, \ldots, K_n)       \]
 for this quotient.   We also write $\pi\colon  (G,\Psub)\to (\bar{G},\bar{\Psub})$, where $\Psub$ is the collection of images of the $P\in \Psub$.

 We say that a property holds {for all sufficiently long fillings of $(G,\Psub)$} if there is a finite set $\mathcal{B} \in G \setminus \{ 1 \}$ so that whenever $K_i \cap \mathcal{B} = \emptyset$ for all $i$, the group $G/K$ has the property.
\end{definition}
\begin{remark}\label{dehnsetuptf}
  The setup in Theorems \ref{maincorollary} and \ref{maintheorem} can be realised as a Dehn filling. To see this, take $G$ to be a torsion-free hyperbolic group and $g\in G$ an element of infinite order. Take $\Psub$ to be the collection consisting of only one subgroup, namely $E(g)$, and $K$ to be the collection consisting of the subgroup $\langle g^n \rangle \leq E(g)$ for all large enough $n$. We claim that $\langle g^n\rangle$ is a normal subgroup of $E(g)$. Since the group $G$ is torsion-free, so is $E(g)$. Since $E(g)$ is elementary, this implies that, in particular, it is cyclic, and it follows that $\langle g^n\rangle \unlhd E(g)$. Hence, the quotient map $\pi\colon  G \to \gbar$ is a Dehn filling, where $\gbar=G\big/ \langle\langle g^n \rangle \rangle$.
\end{remark}
We can realise our setup as a group-theoretic Dehn filling regardless of the assumption of $G$ being torsion-free. However, doing so will require us to lose generality in our assumptions for $n$ in $\gbardef$:
\begin{remark}\label{dehnsetup}
A version of the above remark also holds when we drop the torsion-free assumption for $G$. Since $E(g)$ might not be cyclic, $\langle g^n \rangle$ might not be a normal subgroup in $E(g)$. However, there exists $k> 0$ such that for all multiples $n$ of $k$, it is a normal subgroup. To see this, consider $\langle g^k \rangle$. If $\langle g \rangle \normal E(g)$, then so is $\langle g^k\rangle$ for any $k$. If not, it is a standard result that there exists some $k$ such that $\langle g^k\rangle\unlhd E(g)$. Therefore, setting $n$ in the setup of Theorems \ref{maincorollary} and \ref{maintheorem} such that $n$ is a multiple of $k$, we have that $\langle g^n\rangle \unlhd E(g)$. Therefore, we can conclude that there exists some $n$ such that $\gbar = G\Big/ \langle\langle g^n \rangle \rangle$ is a Dehn filling of $G$.
\end{remark}
\begin{definition}
    We say $G$ admits an elementary splitting if $G$ splits over an elementary subgroup $E\leq G$.
\end{definition}
\begin{theorem}\cite[Theorem 1.1]{grovesmanning}\label{gmsplit}
Suppose that $(G,\Psub)$ is relatively hyperbolic, with $\Psub$ consisting of virtually polycyclic subgroups. If G does not admit any non-trivial elementary splittings, then sufficiently long Dehn fillings of $G$ do not admit any non-trivial elementary splittings.
\end{theorem}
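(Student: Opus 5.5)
The statement is \cite[Theorem 1.1]{grovesmanning}, which we are entitled to quote. Still, here is how I would prove it directly; it is a compactness argument by contradiction.

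\emph{Setting up the contradiction.} Suppose the conclusion fails. Then there is a sequence of fillings $\pi_i\colon G\to \bar G_i=G(K_1^i,\dots,K_n^i)$ which become arbitrarily long. Concretely, for every finite set $\mathcal B\subset G\setminus\{1\}$, eventually $K^i_j\cap\mathcal B=\emptyset$. We assume each $\bar G_i$ admits a non-trivial splitting over an elementary subgroup $\bar E_i$.

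\emph{Quotient structure.} By the relatively hyperbolic Dehn filling theorem \cite{osinPeripheralFillingsRelatively2007,grovesDehnFillingRelatively2008}, for $i$ large:
\begin{itemize}
\item $(\bar G_i,\bar{\Psub}_i)$ is relatively hyperbolic;
\item $\pi_i$ is injective on any prescribed finite ball;
\item every elementary subgroup of $\bar G_i$ is either parabolic, i.e.\ contained in a conjugate of some $\bar P_j$, or the injective image of a virtually cyclic subgroup of $G$ acting loxodromically.
\end{itemize}

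\emph{Pulling back and passing to a limit.} Pull back the Bass--Serre tree $T_i$ of the splitting of $\bar G_i$ to an action of $G$ through $\pi_i$. This action has no global fixed point, and its edge stabilisers are $\pi_i^{-1}(\bar E_i)$. These are typically huge, since they contain $\ker\pi_i$, so the splitting cannot be pulled back directly.

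Instead, fix a finite generating set $S$ of $G$. Choose basepoints $x_i\in T_i$ minimising $\max_{s\in S} d(x_i,s x_i)$, and rescale $T_i$ by this displacement $\lambda_i$. Following Bestvina--Paulin, a subsequence then converges in the equivariant Gromov--Hausdorff sense to a non-trivial action of $G$ on an $\mathbb R$-tree $\mathcal T$. In the case where $\lambda_i$ stays bounded, one instead obtains a simplicial limit directly.

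Two properties of the limit come from $\pi_i$ eventually being injective on finite sets:
\begin{itemize}
\item the limit ignores $\ker \pi_i$, because any fixed element of $G$ eventually lies outside it;
\item arc stabilisers of $\mathcal T$ are limits of subgroups that are eventually conjugate into some $P_j$, or are virtually cyclic.
\end{itemize}
Here the hypothesis that the $P_j$ are virtually polycyclic is used: since these subgroups are Noetherian, stabilisers of non-degenerate arcs are virtually polycyclic, hence small, and stable. One must also check that each $P_j$ fixes a point of $\mathcal T$. I would derive this from the peripheral subgroups being elliptic, or nearly so, in each $T_i$, since $\bar P_j$ is a virtually polycyclic quotient.

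\emph{Getting a splitting of $G$ and the contradiction.} Apply the Rips machine, in the relative version of Guirardel for stable actions with small arc stabilisers. This yields a non-trivial splitting of $G$ relative to $\Psub$ over a subgroup that is either virtually cyclic or contained in a parabolic subgroup. Using that $G$ is relatively hyperbolic, I would upgrade this to a splitting over an elementary subgroup. That contradicts the hypothesis, so sufficiently long fillings admit no non-trivial elementary splittings.

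\emph{Main obstacle.} The main obstacle is the last step together with stability: ruling out arc stabilisers that are not elementary. This requires the uniform control of elementary subgroups in long fillings from Groves--Manning's work on Dehn filling. Rather than redo this, for our purposes we simply invoke \cite{grovesmanning}.
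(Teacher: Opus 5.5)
Your proposal agrees with the paper, which gives no argument of its own and simply quotes \cite[Theorem 1.1]{grovesmanning}; invoking that result is all that is required here. Your accompanying sketch would not stand on its own, though, for two reasons. First, as you concede, the substance of the theorem is showing that arc stabilisers of $\mathcal{T}$ are elementary in $G$ itself, not merely that they are approximated by subgroups whose images in $\bar G_i$ are elementary, where the parabolic or virtually cyclic preimages may depend on $i$. Second, your claim that $\bar P_j$ is (nearly) elliptic in $T_i$ because it is virtually polycyclic is unjustified, since for instance $\mathbb{Z}$ acts hyperbolically on a line --- and a splitting relative to $\Psub$ is more than the contradiction needs, as any non-trivial elementary splitting of $G$ already suffices.
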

\begin{corollary}\label{nosplitbdry}
Let $G$ be a hyperbolic group that admits no non-trivial elementary splittings. Let $g\in G$ be an infinite order element. Then, there exists $k$ such that for all multiples $n$ of $k$, $\gbar=G\big/ \langle\langle g^n\rangle\rangle$ does not admit any non-trivial elementary splittings. If, in addition, $G$ is torsion free, the above holds for all sufficiently large $n$.
\end{corollary}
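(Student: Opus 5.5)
The plan is to apply Theorem \ref{gmsplit} to the pair $(G,\Psub)$ with $\Psub=\{E(g)\}$. By Theorem \ref{elementarysubgroup}, $G$ is hyperbolic relative to $E(g)$. Since $E(g)$ is elementary, hence virtually cyclic, it is virtually polycyclic, so the hypotheses of Theorem \ref{gmsplit} hold. Because $G$ admits no non-trivial elementary splittings, Theorem \ref{gmsplit} provides a finite set $\mathcal{B}\subset G\setminus\{1\}$ such that every Dehn filling $G(K)$ with $K\unlhd E(g)$ and $K\cap\mathcal{B}=\emptyset$ admits no non-trivial elementary splittings.

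Next, by Remark \ref{dehnsetup} I choose $k>0$ with $\langle g^k\rangle\unlhd E(g)$. For every multiple $n$ of $k$, the subgroup $\langle g^n\rangle$ is characteristic in the cyclic normal subgroup $\langle g^k\rangle$, hence normal in $E(g)$. Then $\gbar=G/\langle\langle g^n\rangle\rangle$ is the Dehn filling $G(\langle g^n\rangle)$, since the normal closure in $G$ of $\langle g^n\rangle$ is $\langle\langle g^n\rangle\rangle$.

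It remains to ensure the filling is long enough, that is, $\langle g^n\rangle\cap\mathcal{B}=\emptyset$. Since $g$ has infinite order and $\mathcal{B}$ is finite, only finitely many integers $m$ satisfy $g^m\in\mathcal{B}$, and $m\neq 0$ because $1\notin\mathcal{B}$. Let $M=\max\{|m| : g^m\in\mathcal{B}\}$, or $M=0$ if there are none. If $n>M$, every non-trivial element $g^{jn}$ of $\langle g^n\rangle$ has $|jn|\ge n>M$, so it lies outside $\mathcal{B}$. Replacing $k$ by a multiple of $k$ exceeding $M$ then gives the first claim for all multiples $n$ of this new $k$.

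In the torsion-free case, Remark \ref{dehnsetuptf} shows $E(g)$ is cyclic, so $\langle g^n\rangle\unlhd E(g)$ for every $n$. The same length argument then gives the conclusion for all $n>M$.

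The only point needing care is the normality of $\langle g^n\rangle$ in $E(g)$ when $G$ has torsion. Here I rely on Remark \ref{dehnsetup} and on the fact that subgroups of a normal cyclic subgroup are normal. Otherwise the argument is a direct combination of the cited results.
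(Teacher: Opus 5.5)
Your proposal is correct and follows the same route as the paper. You apply Theorem \ref{gmsplit} to the pair $(G,\{E(g)\})$, use Remark \ref{dehnsetup} (resp.\ Remark \ref{dehnsetuptf}) to realise $\gbar$ as a Dehn filling, and then pass to a multiple of $k$ large enough that $\langle g^n\rangle$ avoids the finite set $\mathcal{B}$. Your explicit bound $n>M$, which controls every non-trivial element $g^{jn}$ and so yields the conclusion for all multiples of the enlarged $k$, is a cleaner version of the paper's remark that only finitely many exponents are excluded.
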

\begin{proof}
To see this, we use Theorem \ref{gmsplit}. Taking $\Psub$ to be the collection consisting of $E(g)$, $(G,\Psub)$ is relatively hyperbolic by Theorem \ref{elementarysubgroup}. Moreover, $\Psub$ contains only $E(g)$, which is virtually cyclic, therefore virtually polycyclic. By assumption, $G$ admits no non-trivial elementary splittings, hence by Theorem \ref{gmsplit}, sufficiently long Dehn fillings of $G$ do not admit non-trivial elementary splittings. Hence, by Definition \ref{def:dehn}, there are finitely many $k$ that are not allowed in $\langle g^k \rangle$; we refer to this finite collection as $\mathcal{K}$. Thus, let $k$ be as in Remark \ref{dehnsetup}: in particular, $k$ is chosen such that for all multiples $n$ of $k$, $\langle g^n \rangle\unlhd E(g)$. If $k\in \mathcal{K}$ instead choose $k'$ to be a multiple of $k$. There are only finitely many integers in $\mathcal{K}$, so there exists a multiple for which $\gbar$ admits no non-trivial elementary splittings. 

If $G$ is torsion-free, then by Remark \ref{dehnsetuptf} $\langle g^n \rangle\unlhd E(g)$ for any $n$. Again, the property fails for finitely many choices of $n$, hence picking $n$ to be large enough guarantees that $\gbar$ admits no-non trivial elementary splittings. 
\end{proof}
\section{Quasi-arcs that avoid obstacles}\label{quasiarcssection}
We turn our focus on arcs within a metric space. We aim to find a way to move arcs to ensure they avoid a collection of separated obstacles, as in \cite{mackaysisto}. Indeed, in order for a subset of the boundary to survive the quotient, it must avoid all points in $\bdry G$ which ``disappear'', meaning they correspond to geodesic rays in $G$ that are mapped to bounded sets in the quotient group (see Section \ref{bdrymapsection} for a more detailed discussion of what it means for a subset of the boundary to survive the quotient). The results in this section show that, given any arc in a sufficiently ``nice" metric space, we can shift it to avoid a collection of obstacles while staying close to the original arc and controlling the diameter. More specifically, the purpose of this section is to prove Lemma \ref{quasiarcs}, which elaborates on the arguments in the aforementioned publication. We begin by recalling the machinery of Mackay and Sisto.
\subsection{Basic definitions} Before we precisely state the upcoming results, we provide some necessary definitions. Unless stated otherwise, the discussion that follows uses terminology from \cite{mackay}.

Firstly, the space that we work in must be \textit{doubling}. Letting $(Z,\rho)$ be a metric space, we say it is $N$-doubling if there exists a constant $N$ such that, given any ball in $Z$, we can cover it by no more than $N$ balls of half its radius. Moreover, we say $Z$ is $L$\textit{-linearly connected} for a constant $L\geq 1$, if any two points $x,y \in Z$ can be connected by a compact, connected set $J$ such that $\diam(J)\leq Ld(x,y)$. 

A \textit{quasi-arc} is a stronger notion than that of an arc. More precisely, letting $J$ be an arc in some $N$-doubling, $L$-linearly connected metric space $Z$, we say it is $\la$-quasi-arc if for all $x,y\in J$, the diameter of the subarc with endpoints $x,y$ satisfies $\diam(J[x,y])\leq \la d(x,y)$. $J$ is an $\e$-local $\la$-quasi-arc if, for all $x,y\in J$ with $\rho(x,y)\leq \e$, the diameter of the subarc with endpoints $x,y$ satisfies $\diam(J[x,y])\leq \la d(x,y)$.

The next definitions make the notion of ``avoiding sets" more explicit. First, we define sets which are \textit{porous}, meaning, intuitively, that we can linearly move away from them.

\begin{definition} \cite[Definition 6.8]{mackaysisto} 
    Let $(Z,\rho)$ be a metric space, and $V\subseteq Z$. We say $V$ is $C$-porous on scales below $\delta$ if for any $z\in V$ and $0<r<\delta$, there exists some element $z'\in B(z,r)$ such that $\rho(z',V)\geq r/C$.
\end{definition}
When shifting arcs to avoid obstacles, it is useful to have some control over the resulting arc, in the sense that it ``stays sufficiently close" to the original arc. We use a definition which is stronger than that of an arc being contained within a neighbourhood of another:
\begin{definition}\cite{mackay}
    Let $J,J'$ be arcs in a metric space $(Z,\rho)$. We say $J'$ $\e$-follows $J$ if there exists a map $p\colon J'\to J$ which sends endpoints to endpoints, and for all $x,y\in J'$, $J'[x,y]\subseteq N(J[p(x),p(y)],\e)$. The map $p$ does not have to be continuous.
\end{definition}
Before moving onto the next definition, we introduce notation for the \textit{annular neighbourhood}: in a set $X \subseteq Z$, $A(X,n,m)$ denotes the annular neighbourhood around $X$ for radii $n,m$ for some $n\leq m$. 
\begin{definition}\cite[Definition 6.3]{mackaysisto}
Let $(Z,\rho)$ be a complete, connected metric space. A set $V \subseteq Z$ is $L$-avoidable on scales below $\delta$ for some $L\geq 1$ and $\delta \in (0,\infty ]$, if for any $r \in (0, \delta/2L)$, whenever there is an arc $I\in Z$ and points $x, y \in I \cap A(V, r, 2r)$ such that $I[x, y] \subset
N(V, 2r)$, there exists an arc $J \subset A(V, r/L, 2rL)$ with endpoints $x, y$ so that $J$ $(4rL)$-follows $I[x, y]$.
\end{definition}
Now, let $(Z,\rho)$ be a compact metric space. The following definition makes the notion of a ``collection of separated obstacles" more precise. First, we define a ``modified relative distance function" and call a collection of sets separated if the relative distance between any pair of sets in the collection is sufficiently large:
\begin{definition}\cite[Definition 7.1]{mackaysisto}
    Let $\V$ be a collection of compact subsets $V\subseteq Z$. Let $D\colon \V \to (0,\infty)$ be a map, which we call a scale function. We define $\Delta\colon \V \times \V \to [0,\infty)$ by $$\Delta(V_1,V_2)=\frac{\rho(V_1,V_2)}{\mathrm{min}\{D(V_1),D(V_2)\}}$$ for $V_1,V_2 \in \V$. Then, we say $\V$ is $L$-separated for some constant $L$ if for all $V_1,V_2 \in \V$ such that $V_1 \neq V_2$, $\Delta(V_1,V_2)\geq 1/L$.
\end{definition}

\subsection{Avoiding sets while controlling the quasi-arc}
We now move on to the main content of this section; shifting arcs in a metric space, to ensure that they avoid a collection of obstacles. In what follows, we shall use the following setup:
\begin{assumption}\label{assumptiongeneral}
    Let $(Z,\rho)$ be an $N$-doubling, $L$-linearly connected, compact metric space, with constants $L\geq 10$, $N\geq 1$. Let $\V$ be an $L$-separated collection of compact subsets $V\subseteq Z$ and $D\colon  \V \to (0, \infty)$ be a scale function, such that $V \in \V$ is $L$-porous and $L$-avoidable on scales below $D(V)$.
\end{assumption}
The above assumption guarantees that both the metric space that we work in, as well as the collection of obstacles, are, in a sense, well-behaved. We now define what it means for a set to avoid such a collection.
\begin{definition}\label{avoids}
    Let $(Z,\rho)$ be a metric space, and $\V$ a collection of subsets in $Z$. We say a set $X\subseteq Z$ $\alpha$-avoids $\V$ if $\rho(X,V)\geq 1/\alpha$ for all $V\in \V$. 
\end{definition}
We recall a result from Chapter 7 of \cite{mackaysisto}, which, given a pair of points in $Z$, allows us to obtain points that avoid $\V$ and are not ``too far" from the original points. The following proposition actually holds in a more general setting than our assumption, as $\V$ does not need to be $L$-avoidable.

\begin{proposition}\label{MS7.3}\cite[Proposition 7.3]{mackaysisto}
Let $(Z, \rho)$ and $\V$ satisfy Assumption \ref{assumptiongeneral}. For any $r \leq 1/5L$, given $p \in Z$ there exists $q \in Z$ so that $\rho(p,q) \leq \diam(Z) r$, and that $q$ $8L^2/r$-avoids $\V$.
\end{proposition}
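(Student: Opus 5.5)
We aim for a multi-scale argument: at each scale only boundedly many obstacles matter, and porosity lets us step away from them with a geometrically shrinking correction. Fix $r\le 1/5L$ and $p\in Z$, and normalise so that $\diam(Z)$ is the reference scale. Only obstacles with $\rho(p,V)$ comparable to or smaller than their scale $D(V)$ can be close to a candidate point. We build a sequence $p=q_0,q_1,q_2,\dots$ with $\rho(q_i,q_{i+1})$ decaying geometrically, and take $q=\lim q_i$, which exists by compactness (completeness) of $Z$. The sum of the steps must be at most $\diam(Z)\,r$, and $q$ must lie at distance at least $r/(8L^2)$ from every $V\in\V$. Only the constant $1/\alpha = r/8L^2$ needs checking; no avoidability is required, consistent with the remark preceding the statement.

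\textbf{Key steps.} First, separation makes the obstacles near a point at a given scale essentially unique: if $V_1\neq V_2$ both meet $B(z,s)$ with $D(V_i)\ge 2Ls$, then $\rho(V_1,V_2)\le 2s<\min D(V_i)/L$, contradicting $\Delta(V_1,V_2)\ge 1/L$. Second, we handle the large obstacles. Put $s_0=\diam(Z)\,r/2$. If some $V$ with $D(V)$ large (relative to $s_0$) comes within $s_0/L$ of $p$, it is unique by the first step, and porosity of $V$ at scale $s_0<D(V)$ gives a point $q_1\in B(p,2s_0/L\cdot\ldots)$ with $\rho(q_1,V)\ge s_0/L^2$. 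Third, we iterate at scales $s_k=s_0 L^{-2k}$ (or $s_0 2^{-k}$). At stage $k$, the only obstacles that can come within $s_k/L^2$ of $q_k$ are, by the first step, either a single one with $D(V)\gtrsim s_k$, handled by porosity, or ones with $D(V)$ small. Small-scale obstacles can be arbitrarily close to $p$, since $\V$ may be infinite and accumulate. Here we use that each $V$ is compact and the family is separated: the sets with $D(V)\ge t$ form a finite collection for each $t>0$ (doubling plus separation gives a packing bound). This lets us stop once $s_k$ is below the distance from $q_k$ to all remaining relevant obstacles. Fourth, we sum the steps $\sum \rho(q_k,q_{k+1})\le \sum 2s_k\le \diam(Z)\,r$ and check that later moves do not undo earlier gains: each move at stage $k$ is at most a fraction of the clearance $s_{k-1}/L^2$ achieved before, so the final clearance from each obstacle is at least half its stage value, which is at least $r/8L^2$ after rescaling by $\diam(Z)$. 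This is where $L\ge 10$ and $r\le 1/5L$ are used.

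\textbf{Main obstacle.} The delicate step is the bookkeeping in the third and fourth steps. Porosity moving away from one obstacle might approach another, and obstacles with tiny scale $D(V)$ cannot be handled by porosity at the large scale $r\diam(Z)$. We would first try the cleanest route: only obstacles with $D(V)\ge r\diam(Z)$ need avoiding at distance comparable to $r$. Obstacles with $D(V)$ much smaller may have small diameter, but the required clearance $r/8L^2$ is absolute, not relative to $D(V)$. So we would argue directly that the small obstacles near a point are sparse. Porosity of their union near a point follows from the separation bound $\rho(V_1,V_2)\ge \min D/L$ together with doubling, and a single porosity step then suffices. If this fails, the fallback is the inductive scale-by-scale construction above, with the finiteness of $\{V: D(V)\ge t\}$ guaranteeing that it terminates.
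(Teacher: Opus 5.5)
The paper gives no argument here beyond citing Mackay--Sisto. Your plan has a genuine gap. You are aiming for an absolute clearance $\rho(q,V)\ge r/8L^2$ for every $V\in\V$, which is the literal reading of Definition \ref{avoids}. With that reading the statement is false, so no bookkeeping can close it.

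First, Assumption \ref{assumptiongeneral} is unchanged under $\rho\mapsto t\rho$, $D\mapsto tD$, but your target is not. Once $\diam(Z)<r/8L^2$, every point is closer than $r/8L^2$ to any nonempty $V$. Second, consider the setting where the paper applies the proposition, with $\V$ the limit sets of cosets of $E(g)$. There $\bigcup\V\supseteq G\cdot g^{+\infty}$ is dense in $\bdry G$, so $\inf_{V\in\V}\rho(q,V)=0$ for every $q$.

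The step that breaks is your claim that the small obstacles near a point are sparse with porous union. Separation only gives $\rho(V_1,V_2)\ge \min\{D(V_1),D(V_2)\}/L$, which tends to $0$ with the scales, so small obstacles may accumulate everywhere. The finiteness of $\{V : D(V)\ge t\}$ is true but does not help. The infinitely many $V$ with $D(V)<t$ would each still need the same absolute clearance, so your induction never terminates. Correspondingly, the clearance your fourth step actually produces for an obstacle treated at stage $k$ is of order $s_k/L^2\to 0$; the phrase ``after rescaling by $\diam(Z)$'' hides this.

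What is true, and what Mackay--Sisto prove, is clearance proportional to $D(V)$. Their avoidance is measured against the scale function, which is also what the transversality in Lemma \ref{transversal} requires. For that version your fallback scheme works without any termination:

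\begin{enumerate}[(1)]
\item Put $s_k=s_0r^k$ with $s_0=L\diam(Z)$; porosity below $D(V)$ forces $D(V)\le L\diam(Z)$. Say $V$ has class $k$ if $D(V)\in(rs_k,s_k]$.
\item By separation, at most one class-$k$ obstacle meets $B(q_k,rs_k/2L)$.
\item If $V$ does, take $z\in V$ nearest to $q_k$ and apply porosity at $z$ at scale $rs_k/2L<D(V)$. This gives $q_{k+1}$ with $\rho(q_{k+1},V)\ge rs_k/2L^2$ and $\rho(q_k,q_{k+1})<rs_k/L$. Otherwise set $q_{k+1}=q_k$.
\item Every other class-$k$ obstacle is at distance more than $rs_k/L$ from $z$, hence more than $rs_k/2L$ from $q_{k+1}$. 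This settles your worry about stepping towards a neighbouring obstacle.
\item Since $r\le 1/5L$, all later steps together have length at most $rs_k/4L^2$. So $q=\lim q_k$ satisfies $\rho(q,V)\ge rs_k/4L^2\ge rD(V)/4L^2$ for every $V$ of class $k$.
\item Finally $\rho(p,q)<r\diam(Z)/(1-r)$, which matches the required bound up to a harmless constant.
\end{enumerate}
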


The next lemma ensures that, given an arc in $Z$, even if its endpoints do not avoid $\V$, it is possible to move the endpoints slightly and obtain a new arc that remains close to the original. 

\begin{lemma}\label{endpoints}
   Let $(Z,\rho)$ and $\V$ satisfy Assumption $\ref{assumptiongeneral}$. For all $\varepsilon >0$, there exists $\alpha$ such that for all $x,y\in Z$, there exist $x',y'\in Z$ that satisfy the following:\\ For every arc $J=J[x,y]$ there exists an arc $J'=J'[x',y']$ which lies within the $\varepsilon$-neighbourhood of $J$, such that the endpoints $x',y'$ $\alpha$-avoid $\V$. 
\end{lemma}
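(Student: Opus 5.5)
We use Proposition \ref{MS7.3} to move the endpoints, and linear connectedness of $Z$ to attach them back to $J$. Fix $\varepsilon>0$. If $\diam(Z)=0$ the statement is vacuous, so assume $\diam(Z)>0$. Choose
\[
r=\min\Big\{\frac{1}{5L},\ \frac{\varepsilon}{2L\,\diam(Z)}\Big\},
\]
so that $r\le 1/5L$ and Proposition \ref{MS7.3} applies. Then set $\alpha=8L^2/r$. This $\alpha$ depends only on $\varepsilon$, $L$ and $\diam(Z)$, not on $x,y$ or $J$, as the statement requires.

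Given $x,y\in Z$, Proposition \ref{MS7.3} gives points $x',y'$ with $\rho(x,x')\le \diam(Z)\,r$ and $\rho(y,y')\le\diam(Z)\,r$, and with $x',y'$ both $\alpha$-avoiding $\V$. These depend only on $x,y$, as the quantifier order in the lemma requires. Now let $J=J[x,y]$ be any arc. By $L$-linear connectedness there is a compact connected set $K_x$ containing $x$ and $x'$ with $\diam(K_x)\le L\rho(x,x')\le \varepsilon/2$, and similarly a set $K_y$ for $y,y'$. The union $K=K_x\cup J\cup K_y$ is compact and connected, since each piece meets $J$. It is also contained in $N(J,\varepsilon)$, because every point of $K_x$ lies within $\varepsilon/2$ of $x\in J$, and likewise for $K_y$.

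The one genuine obstacle is that $K$ is only a continuum, while we need an arc. First, if $x'=y'$ the statement degenerates, and we may either allow a constant arc or perturb $y'$ slightly using Proposition \ref{MS7.3} again. Otherwise $x'\neq y'$. Since $Z$ is compact, $K$ is a compact metric continuum. If $K$ is locally connected, it is a Peano continuum, hence arcwise connected, and it contains an arc $J'$ from $x'$ to $y'$. This arc automatically lies in $N(J,\varepsilon)$. To guarantee local connectedness, I would not take arbitrary continua $K_x,K_y$. Instead I would use the standard fact that in a complete, doubling, linearly connected space, points can be joined by arcs, indeed quasi-arcs, with diameter controlled by a constant multiple of their distance (compare \cite{mackay}). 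Replacing $L$ by that constant in the choice of $r$, the sets $K_x,K_y$ become arcs, so $K$ is a union of three arcs and is a Peano continuum. Extracting an arc from $x'$ to $y'$ inside this union (follow $K_x$ from $x'$ until its first hit of $J\cup K_y$, then continue inside $J\cup K_y$) completes the proof.

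The main point to check is therefore this regularity step: that linear connectedness can be realised by arcs, so that the arc extraction is legitimate. The rest of the argument is the straightforward choice of constants above.
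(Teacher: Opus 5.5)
Your proof is correct and follows the paper's argument: move the endpoints with Proposition~\ref{MS7.3}, join them back to $J$ using linear connectedness, and extract an arc from $x'$ to $y'$ inside the union. Your extra care fixes two points the paper passes over. The paper simply asserts that linear connectedness yields arcs, and your appeal to Mackay's quasi-arc theorem justifies this. Its choice $r=\min\{\varepsilon/\diam(Z),1/5L\}$ only gives $\diam(Z)Lr\le L\varepsilon$, which your choice $r\le \varepsilon/(2L\diam(Z))$ repairs.
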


\begin{proof}
    Let $\varepsilon>0$. By Proposition \ref{MS7.3}, for any $r\leq 1/5L$, we can find points $x',y'\in Z$ that $\alpha$-avoid $\V$, where $\alpha = 8L^2/r$. In addition, $\rho(x',x)$, $\rho(y',y)\leq \mathrm{diam}(Z)r$. Since $Z$ is $L$-linearly connected, the points $x,x'$ are connected by an arc of diameter at most $Ld(x,x')\leq \mathrm{diam}(Z)Lr$. Let this arc be denoted by $J_x$. The same argument yields an arc $J_y$ from $y$ to $y'$. Now, we have a path given by the concatenation $J_x\cup J \cup J_y$. It is a standard topological fact that we can find an arc inside any path, implying that there exists an arc $J'\subseteq (J_x\cup J \cup J_y)$. We claim that this arc lies within the $(\diam(Z)Lr)$-neighbourhood of $J$. To show this, we note that, since $\diam(J_x)$, $\diam(J_y)\leq \diam(Z)Lr$, any point $j \in J'$ satisfies $\rho(j,J)\leq \diam(Z)Lr$. Now, this is true for any $r\leq 1/5L$, in particular we can choose $r=\mathrm{min}\{\frac{\varepsilon}{\diam(Z)}, \frac{1}{5L}\}$. Therefore, we have that $J'$ lies within the $(\diam(Z)Lr)$-neighbourhood of $J$, but $r\leq \frac{\varepsilon}{\diam(Z)L}$ which implies that $\diam(Z)Lr\leq \varepsilon$, and thus $J'$ lies within the $\varepsilon$-neighbourhood of $J$.
\end{proof}
Proving our next result relies on the proof of the following theorem of \cite{mackaysisto}, which allows us to build new quasi-arcs that avoid obstacles.
\begin{theorem}\cite[Theorem 7.2]{mackaysisto}\label{ms7.2}
    Let $(Z, \rho),\V$ satisfy Assumption \ref{assumptiongeneral}. 
For any $\nu \leq 1$ there exists a constant $\lambda = \lambda(N, L, \nu)$ so that, given any two points $x, y \in Z$ that $\nu$-avoid $\V$, 
then $x$ and $y$ can be joined by a $\lambda$-quasi-arc which $\lambda$-avoids $\V$. 
\end{theorem}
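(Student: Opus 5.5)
This is Theorem 7.2 of Mackay--Sisto. Since the paper cites it and will rely on its proof, I would reproduce their argument in outline rather than prove it from scratch. The strategy is to build the quasi-arc as a limit of finite chains of points at geometrically decreasing scales, in the style of Tukia/Mackay. At each scale the chain is modified so that it stays away from the obstacles in $\V$ whose scale $D(V)$ is comparable to the current scale.

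\textbf{Step 1: the initial chain.} Since $Z$ is $L$-linearly connected and $N$-doubling, Mackay's results give that $x,y$ can already be joined by a $\lambda_0$-quasi-arc $J_0$ with $\lambda_0=\lambda_0(N,L)$. By iterative straightening, one then builds at each scale $\e_k = 2^{-k}$ a discrete $\e_k$-chain from $x$ to $y$ that $\e_k$-follows the previous chain.

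\textbf{Step 2: avoiding the obstacles.} At scale $\e_k$, consider the obstacles $V$ with $D(V)\gtrsim \e_k$ that come within distance about $\e_k$ of the chain. Because $\V$ is $L$-separated, these obstacles are at relative distance at least $1/L$ from one another, so their neighbourhoods $N(V,2r)$, with $r$ comparable to $\min(\e_k, D(V))/L$, are pairwise disjoint. For each such $V$, take the first and last points of the chain in $A(V,r,2r)$. Using $L$-avoidability of $V$, replace the subchain between them by an arc inside $A(V,r/L,2rL)$ that $(4rL)$-follows the original subchain. Porosity of $V$ is what lets us choose chain points at distance at least $r/L$ from $V$ in the first place. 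The hypothesis that $x,y$ $\nu$-avoid $\V$ ensures the endpoints never need to be moved; this is where $\lambda$ acquires its dependence on $\nu$.

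\textbf{Step 3: passing to the limit.} By the following property, the total displacement is a geometric series in $\e_k$. Hence the chains converge in the Hausdorff sense to a compact connected set. Mackay's local-to-global quasi-arc criterion, together with doubling, shows that the limit is an arc with $\diam(J[a,b])\leq \lambda\rho(a,b)$. Each $V$ is handled at the scale corresponding to $D(V)$, and later modifications move the arc by at most a fixed multiple of that scale. So the limit stays at distance $\gtrsim \min(D(V),\nu^{-1})/L^{O(1)}$ from each $V$, which gives $\lambda$-avoidance after enlarging $\lambda$.

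\textbf{Main obstacle.} The hard step is keeping the quasi-arc constant uniform while infinitely many detours are made. Two things are needed: the avoidability detours at a given scale must not interact, which follows from $L$-separation, and they must not destroy the quasi-arc property established at coarser scales. I would handle this as Mackay--Sisto do. Their argument uses the $(4rL)$-following bound to show that each detour is a $C(L)$-local quasi-arc at its own scale, and then applies the local-to-global quasi-arc criterion for doubling, linearly connected spaces.
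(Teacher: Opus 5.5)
Your overall architecture matches the Mackay--Sisto scheme that the paper recalls in the proof of Lemma \ref{quasiarcs}: modify at geometrically decreasing scales, detour around each $V$ by avoidability at the scale of $D(V)$, use separation to decouple the detours, and pass to a limit. However, the mechanism you give for the step you call the main obstacle does not work.

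A $(4rL)$-following bound constrains the detour only at scale $4rL$. The arc supplied by avoidability is an arbitrary arc in $A(V,r/L,2rL)$ and may be wild at every smaller scale, so it need not be a local quasi-arc at any scale. Moreover, a local quasi-arc need not be a quasi-arc, since it can wander off and come back. So no local-to-global criterion rescues this without a global normalisation.

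The missing ingredient is re-straightening. After the detours at stage $n$, one applies Proposition \ref{Mackay2.1Prop} with $\iota_n=\e\delta^n$, so that $J_n$ $\iota_n$-follows $J_{n-1}$ and satisfies (\ref{mackay2.1}) at scale $\iota_n$. Lemma \ref{Mackay2.2} then makes the Hausdorff limit an $\e\delta^2$-local quasi-arc.

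The passage to a global $\lambda$-quasi-arc comes from normalising the starting scale. One takes $\e$ comparable to $\rho(x,y)$ (this is the choice of $M$ in the proof of Lemma \ref{quasiarcs}) and an initial arc of diameter at most $L\rho(x,y)$. Then the limit has diameter $\lesssim\rho(x,y)$, which bounds $\diam J[a,b]$ whenever $\rho(a,b)\ge\e\delta^2$. Starting from the absolute scale $\e_0=1$, as you do, gives no constant independent of $\rho(x,y)$.

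The parameters also need changing, and your last inference fails. The ratio $\delta$ between scales must be a small constant depending on $N,L,\nu$, not $1/2$, for two reasons:
\begin{enumerate}
\item Lemma \ref{Mackay2.2} requires $\delta\le\min\{s/(4+2S),1/10\}$.
\item For the margin $r/L$ gained around $V$ to survive, all later displacement must be a small fraction of it. This includes the re-straightening at the same stage and the tail $\sum_{j>n}\iota_j\approx\delta\iota_n$.
\end{enumerate}
With $\e_k=2^{-k}$ the tail equals the current scale, so ``later modifications move the arc by at most a fixed multiple of that scale'' does not give what you claim.

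There are two smaller problems with the detours. They reach out to radius $2rL$, so separation decouples them only if $2rL$ is small compared with $\min\{D(V),D(V')\}/L$; disjointness of the sets $N(V,2r)$ is not enough. Also, avoidability is a statement about arcs $I$ with $I[x,y]\subset N(V,2r)$. So it must be applied excursion by excursion, not between the first and last points of a chain in $A(V,r,2r)$.

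Finally, what your argument produces is $\rho(J,V)\gtrsim D(V)$. Enlarging $\lambda$ cannot turn this into the uniform bound $\rho(J,V)\ge 1/\lambda$ of Definition \ref{avoids}. In a compact space only finitely many members of an $L$-separated family satisfy $D(V)\ge c$, so $\inf_V D(V)=0$ whenever $\V$ is infinite. The theorem has to be read with Mackay--Sisto's scale-relative notion of avoidance, where the distance to $V$ is bounded below by a multiple of $D(V)$. With the uniform notion it is false: take singletons in $S^2$ accumulating on a circle that separates $x$ from $y$.
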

Now, we build such quasi-arcs that avoid obstacles, with the addition that we control their distance from the original arc.
\begin{lemma}\label{quasiarcs}
    Let $(Z,\rho)$ and $\V$ satisfy Assumption $\ref{assumptiongeneral}$. For all $\alpha$, for all arcs $J=J[x,y]$ such that the endpoints $x,y$ $\alpha$-avoid $\V$, there exists $\lambda$ and $\lambda$-quasi-arc $J'$ such that $J'=J'[x,y]$ $\lambda$-avoids $\V$ and $\varepsilon$-follows $J$ for all $\varepsilon >0$.
\end{lemma}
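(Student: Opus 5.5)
The quantifiers in Lemma \ref{quasiarcs} should be read as follows: for every $\alpha$ and every $\varepsilon>0$ there is a $\lambda=\lambda(N,L,\alpha,\varepsilon,\diam(Z))$ such that every arc $J=J[x,y]$ with $\alpha$-avoiding endpoints is $\varepsilon$-followed by a $\lambda$-quasi-arc $J'[x,y]$ which $\lambda$-avoids $\V$. My plan is to rebuild $J$ at a small scale. I would chop $J$ into short pieces, push the division points off $\V$ using Proposition \ref{MS7.3}, and join consecutive points by the quasi-arcs of Theorem \ref{ms7.2}. Then I would extract an arc from the resulting chain and check the three properties.

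\emph{Step 1 (chain of good points).} Fix a small scale $s\ll\varepsilon$, to be chosen in terms of $\varepsilon$, $L$ and $N$. Choose points $x=x_0,x_1,\dots,x_m=y$ in order along $J$ with $\diam(J[x_{i},x_{i+1}])\le s$. Apply Proposition \ref{MS7.3} with $r=\min\{s/\diam(Z),1/5L\}$ to replace each interior $x_i$ by a point $x_i'$ with $\rho(x_i,x_i')\le s$ that $\nu$-avoids $\V$. Here $\nu=\min\{\alpha,8L^2/r\}$, with an adjustment so that $\nu\le 1$ as Theorem \ref{ms7.2} requires. The endpoints are left unchanged, since they already $\alpha$-avoid $\V$. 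Consecutive points then satisfy $\rho(x_i',x_{i+1}')\le 3s$.

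\emph{Step 2 (local quasi-arcs).} By Theorem \ref{ms7.2}, each pair $x_i',x_{i+1}'$ is joined by a $\lambda_0$-quasi-arc $J_i$ which $\lambda_0$-avoids $\V$, where $\lambda_0=\lambda(N,L,\nu)$ depends only on $\varepsilon$ (through $s$) and on $\alpha$. In particular $\diam(J_i)\le 3\lambda_0 s$. Concatenating gives a path $P$, and I extract an arc $J'\subseteq P$ from $x$ to $y$. Avoidance is immediate, since $J'\subseteq\bigcup J_i$ and each $J_i$ $\lambda_0$-avoids $\V$. For following, define $p\colon J'\to J$ by sending a point lying on $J_i$ to $x_i$. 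A subarc $J'[u,v]$ only uses pieces $J_j$ with $j$ between the indices of $u$ and $v$, because the extracted arc traverses the pieces in order after loop removal. Hence $J'[u,v]$ lies within $3\lambda_0 s+2s$ of $J[p(u),p(v)]$. Choosing $s$ with $(3\lambda_0+2)s<\varepsilon$ gives $\varepsilon$-following. Note that $\lambda_0$ depends on $s$ through $\nu$. To avoid circularity, I would first fix the avoidance level $\nu$ using a scale $r$ bounded below independently of how fine the subdivision is, and only afterwards refine the subdivision. This is possible because Proposition \ref{MS7.3} moves points by $\diam(Z)r$, which can be fixed while $s$ shrinks separately. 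I expect to have to check this bookkeeping carefully.

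\emph{Step 3 (quasi-arc property) — the main obstacle.} Concatenating quasi-arcs does not in general give a quasi-arc: points on non-adjacent pieces may be very close while the subarc between them is long. I would proceed in two parts. First, show that $J'$ is an $s'$-local $\lambda_1$-quasi-arc for some $s'$ comparable to $s/\lambda_0$. If $u,v\in J'$ are close and lie on the same or adjacent pieces, the quasi-arc property of the $J_i$ bounds $\diam(J'[u,v])$. If they lie on far-apart pieces, I would shortcut: extract the arc differently, by cutting out the intermediate pieces and inserting a new short quasi-arc. This is done greedily, from a Theorem \ref{ms7.2} arc between good points near $u$ and $v$, while keeping the ordering needed in Step 2. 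Second, the local property globalises trivially by compactness. If $\rho(u,v)\ge s'$ then $\diam(J'[u,v])\le\diam(Z)\le(\diam(Z)/s')\rho(u,v)$. So $J'$ is a $\lambda$-quasi-arc with $\lambda=\max\{\lambda_0,\lambda_1,\diam(Z)/s'\}$, and this $\lambda$ also serves as the avoidance constant. Making the shortcut procedure terminate while preserving the following map is the delicate part. If it fails, I would instead go back into the proof of Theorem \ref{ms7.2} (Mackay's iterative construction) and run it starting from $J$ itself, so that following is built in at every scale.
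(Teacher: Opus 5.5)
Your main route has a genuine gap in Step 3, and it already appears in the case you treat as easy. The quasi-arc property of the individual $J_i$ does not control $\diam(J'[u,v])$ when $u,v$ lie on adjacent pieces. If $J_i$ and $J_{i+1}$ meet tangentially at $x'_{i+1}$, then $\rho(u,v)$ can be much smaller than $\rho(u,x'_{i+1})$. For example, in the plane, the arc that runs along $[0,1]\times\{0\}$ to the origin and back out along $\{(t,t^2)\}$ is a union of two quasi-arcs. Yet $(t,0)$ and $(t,t^2)$ are $t^2$ apart while the subarc between them has diameter at least $t$, so the union is not even a local quasi-arc. Your greedy shortcut, a Theorem \ref{ms7.2} arc between good points near $u,v$, repairs only one scale. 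The inserted arc can meet the old pieces in the same way at a smaller scale, so the procedure need not terminate, and nothing controls the quasi-arc constant, the avoidance, or the following map of whatever limit you take. Controlling exactly this, scale by scale, is the purpose of condition (\ref{mackay2.1}) in Proposition \ref{Mackay2.1Prop} and of the limit Lemma \ref{Mackay2.2}.

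Step 2 also does not close the way you propose. If $r$ is frozen while $s\to 0$, then $\rho(x_i',x_{i+1}')\le s+2\diam(Z)r$, not $3s$. The only control Theorem \ref{ms7.2} gives is $\diam(J_i)\le\lambda_0(s+2\diam(Z)r)$, which does not shrink with $s$. You would need $r\,\lambda(N,L,\nu(r))\to 0$, and the black box gives no such rate. In fact it cannot: $\lambda_0$ is also the avoidance constant for arcs whose endpoints are only known to lie $r/8L^2$ from $\V$. Points at exactly that distance exist by connectedness, which forces $\lambda_0\ge 8L^2/r$ and hence $\lambda_0\cdot 2\diam(Z)r\ge 16L^2\diam(Z)$.

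The route that works is the fallback in your last sentence, and it is what the paper does. The paper does not use Theorem \ref{ms7.2} as a black box. It reruns the Mackay--Sisto construction with the seed arc $J_M$ of Mackay's sequence taken to be $J$ itself. To get the required bound $\diam(J)\le L\rho(x,y)$ it enlarges $L$. This is harmless for Assumption \ref{assumptiongeneral}, since every condition there weakens as $L$ grows, but it makes $\lambda$ depend on $J$. The statement allows that, so the uniformity in $J$ you were aiming for is not needed. The arcs $J_{M+n}$ then $(D_0r^M/4L)r^n$-follow their predecessors and satisfy (\ref{mackay2.1}), and Lemma \ref{Mackay2.2} produces the limit quasi-arc. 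Its following distance from the seed $J$ is pushed below $\varepsilon$ by choosing $r$ small, while avoidance comes from the Mackay--Sisto argument. So the missing idea in your main line is to obtain the following property from inside the multiscale limit construction seeded by $J$, rather than by assembling black-box quasi-arcs along $J$.
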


Theorem \ref{ms7.2} will allow us to obtain a quasi-arc $J'$, as required for Lemma \ref{quasiarcs}, however the statement alone does not guarantee that for all $\varepsilon>0$, $J'$ will $\varepsilon$-follow the original arc $J$. In order to show this, we need the following machinery from \cite{mackay}, which is used in the original proof of Theorem \ref{ms7.2}:
\begin{proposition}\label{Mackay2.1Prop}{\cite[Proposition 2.1]{mackay}} 
Given $(Z,\rho)$ that satisfies Assumption \ref{assumptiongeneral}, there exist constants
 $s=s(L,N)>0$ and $S=S(L,N)>0$ with the following property:
 for each $\iota > 0$ and each arc $A \subset Z$, there exists an arc
 $J$ that $\iota$-follows $A$, has the same endpoints as $A$,
 and satisfies
 \begin{equation} \label{mackay2.1}
  \forall x,y \in J,\  \rho(x,y) < s\iota \implies
   \diam(J[x,y]) < S\iota 
 \end{equation}
where $J[x,y]$ denotes the subarc of $J$ with endpoints $x,y$.
\end{proposition}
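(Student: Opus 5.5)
The plan is to discretize $A$ at scale comparable to $\iota$, remove all ``near returns'' of the discretization by greedy shortcutting, and then rebuild an arc from the shortened chain using $L$-linear connectedness. Throughout, I fix a small constant $c=c(L)$, to be chosen so that all pieces built below have diameter at most $\iota/10$ (the exact numerical choices are not essential).

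First, choose points $a_0,a_1,\dots,a_m$ along $A$, in the order of $A$, with $a_0,a_m$ the endpoints of $A$ and $\diam(A[a_{i},a_{i+1}])\le c\iota$. This is possible since $A$ is compact. Next, run a greedy shortcut. Start at $i_0=0$. Given $i_k$, let $i_{k+1}$ be the largest index $j>i_k$ with $\rho(a_{i_k},a_j)\le c\iota$; this exists since $j=i_k+1$ qualifies. Stop once $i_k=m$. Write $b_k=a_{i_k}$. The resulting chain $b_0,\dots,b_K$ has the following properties.
\begin{enumerate}[(i)]
\item Consecutive points satisfy $\rho(b_k,b_{k+1})\le c\iota$.
\item For $l\ge k+2$ we have $\rho(b_k,b_l)>c\iota$, by maximality of $i_{k+1}$, since $i_l>i_{k+1}$.
\end{enumerate}
By linear connectedness, join $b_k$ to $b_{k+1}$ by a continuum $C_k$ with $\diam C_k\le Lc\iota$. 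Inside the continuum $\bigcup_k C_k$, which is arcwise connected because it is a finite chain of Peano continua (I would first replace each $C_k$ by an arc, which is possible by the standard fact that linearly connected complete spaces are locally connected), extract an arc $J$ from $a_0$ to $a_m$, exactly as in the proof of Lemma \ref{endpoints}.

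The \emph{following} property is then straightforward. Define $p\colon J\to A$ by sending a point of $J\cap C_k$, taking the smallest such $k$, to $b_k$, and endpoints to endpoints. The arc $J$ traverses the pieces in a way that is monotone up to backtracking within bounded index windows. Hence $J[x,y]$ lies in the union of the $C_k$ with $k$ between the relevant indices, up to one step. Each $C_k$ lies within $Lc\iota$ of $b_k\in A[p(x),p(y)]$, which gives $J[x,y]\subseteq N(A[p(x),p(y)],\iota)$ once $c$ is small. One has to be careful when $J$ revisits an earlier $C_k$. I would handle this by choosing $J$ so that it meets the $C_k$ ``in order'': build it inductively, cutting at the last point of $C_{k}$ before moving on. This makes $p$ essentially monotone.

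The main obstacle is the local quasi-arc estimate \eqref{mackay2.1}. The key input is doubling combined with property (ii). If $x\in C_k$ and $y\in C_l$ satisfy $\rho(x,y)<s\iota$, then $\rho(b_k,b_l)<s\iota+2Lc\iota$. I want to show this forces $|k-l|$ to be bounded by a constant $M=M(L,N)$. Then $J[x,y]\subseteq\bigcup_{k\le j\le l}C_j$, which has diameter at most $M\cdot Lc\iota=:S\iota$. To bound $|k-l|$, I would not use (ii) alone. Instead I would run the shortcut at a \emph{coarser} threshold: replace ``$\rho(a_{i_k},a_j)\le c\iota$'' by ``$\le\iota/10$'', while keeping consecutive $b$'s joined via the fine chain $a_{i_k},a_{i_k+1},\dots$ only up to that point. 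Then every pair of chain points whose indices differ by at least $2$ is at least $\iota/10$ apart. Choosing $s\ll 1/10$ then gives $|k-l|\le 1$ directly. The diameters of the connecting pieces are now controlled by a bound of order $\iota$ rather than $c\iota$. This forces the chosen $s$ and $S$ to depend only on $L$ and $N$, with doubling used to bound how many fine pieces a single coarse piece spans. Balancing these two scales is the step I expect to require the most care.
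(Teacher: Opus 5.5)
The paper does not prove this statement. It quotes Mackay's Proposition~2.1 and uses it as a black box in Lemma~\ref{quasiarcs}. Judged on its own, your proposal has a genuine gap, exactly at the step you flag: the estimate \eqref{mackay2.1}.

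Your chain construction, properties (i)--(ii) and the following argument are fine. The problem is that separation of the chain points does not control the connecting pieces. Whatever greedy threshold $t$ you use ($c\iota$ or $\iota/10$), (ii) only makes non-consecutive $b_k$ $t$-separated, while linear connectedness only gives $\diam C_k\le Lt$ with $L\ge 10$. From $x\in C_k$, $y\in C_l$ and $\rho(x,y)<s\iota$ you get only $\rho(b_k,b_l)<s\iota+2Lt$. This is larger than $t$ however small $s$ is, so (ii) cannot be invoked, and the claim that $s\ll 1/10$ gives $|k-l|\le 1$ ``directly'' is false. Two further problems with the coarse version:
\begin{itemize}
\item At threshold $\iota/10$ the pieces may stray $L\iota/10\ge\iota$ from $A$, which breaks $\iota$-following.
\item If ``joined via the fine chain'' means following $A$ from $a_{i_k}$ to $a_{i_{k+1}}$, that piece has no diameter bound at all, since the greedy jump exists precisely to skip long excursions of $A$.
\end{itemize}

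What doubling plus (ii) does give is bounded multiplicity. The even-indexed and the odd-indexed $b_k$ are each $t$-separated, so a ball of radius comparable to $Lt$ meets at most $M=M(L,N)$ of the $C_k$. But bounded multiplicity is not a bounded index gap. The chain can leave a ball and come back after arbitrarily many steps, with $C_k$ and $C_l$ passing within $s\iota$ of each other without meeting. The arc you extract from $\bigcup_k C_k$ is then forced to contain the whole excursion between $x$ and $y$, so $\diam J[x,y]$ is not bounded by any multiple of $\iota$.

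The missing ingredient is a procedure that modifies the arc itself at such close returns, replacing $J[x,y]$ by a short arc from $x$ to $y$. You also need to control the resulting cascade: a shortcut at threshold $r$ inserts a piece of size up to about $Lr$, which can create new close returns at that larger scale, including with the part of the arc before $x$. Doubling has to enter here, through a bound such as $M(L,N)$ on how many modifications can pile up near a point, and the result must still $\iota$-follow $A$. This is the real content of the proposition and is where $s$ and $S$ acquire their dependence on $L$ and $N$. Your closing ``balancing of two scales'' does not supply it.
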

The following result allows us to take limits of sequences of arcs, which is how the quasi-arc $J'$ is created in the proof of Theorem \ref{ms7.2} in \cite{mackaysisto}.
\begin{lemma}\label{Mackay2.2}{\cite[Lemma 2.2]{mackay}}
Let $(Z,\rho)$ satisfy Assumption \ref{assumptiongeneral}. Let $s,S,\varepsilon,\delta$ be fixed, positive constants satisfying $\delta \leq \mathrm{min}\{\frac{s}{4+2S},\frac{1}{10}\}$. If we have a sequence of arcs $J_0,J_1,...,J_n,...$ in $Z$ such that $\forall n\geq 1$
\begin{itemize}
    \item $J_n$ $\varepsilon\delta^n$-follows $J_{n-1}$, 
    \item $J_n$ satisfies (\ref{mackay2.1}) with $\iota=\varepsilon\delta^n$ and $s,S$ as fixed above,
\end{itemize}
then the Hausdorff limit $J=\mathrm{lim}_\mathcal{H}(J_n)$ exists, and is an $\varepsilon \delta^2$-local
 $\frac{4S+3\delta}{\delta^2}$-quasi-arc. Moreover, the endpoints of $J_n$ converge to the endpoints of $J$,
 and $J$ $\varepsilon$-follows $J_1$.
\end{lemma}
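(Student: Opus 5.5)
The plan is to build $J$ as a limit of parametrisations rather than only as a Hausdorff limit of sets, since the parametrisation is what will show that $J$ is an arc and that it follows $J_1$. Write $\iota_n=\varepsilon\delta^n$, and let $p_n\colon J_n\to J_{n-1}$ be the following maps, which send endpoints to endpoints.

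\emph{Step 1: the Hausdorff limit and the endpoints.} Applying the following property to the endpoints of $J_n$ shows $J_n\subseteq N(J_{n-1},\iota_n)$. For the reverse inclusion, take consecutive points of $J_n$ and their images under $p_n$. Because $p_n$ fixes endpoints and $J_n$ is connected, I expect the images of short subarcs of $J_n$ to sweep out all of $J_{n-1}$ up to an error controlled by $\iota_{n-1}$, using (\ref{mackay2.1}) for $J_{n-1}$; this gives $J_{n-1}\subseteq N(J_n,C\iota_{n-1})$ for some $C=C(S)$. Since $\sum_n\iota_n<\infty$, the sequence $(J_n)$ is Cauchy in the Hausdorff metric. $Z$ is compact, so the hyperspace of compact subsets of $Z$ is complete, and $J=\lim_{\mathcal H}J_n$ exists. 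The endpoints satisfy $\rho(a_n,a_{n-1})\le\iota_n$, so they form Cauchy sequences and converge to points $a,b\in J$.

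\emph{Step 2: parametrisation and arc structure.} Using (\ref{mackay2.1}), choose on each $J_n$ a finite ordered chain of points whose consecutive subarcs have diameter less than $S\iota_n$. Refine these chains compatibly along the maps $p_n$, indexing them by dyadic-type parameters in $[0,1]$. The following property together with the geometric decay of $\iota_n$ makes the induced maps $[0,1]\to J_n$ uniformly Cauchy. Their limit is a continuous surjection $\phi\colon[0,1]\to J$ with $\phi(0)=a$ and $\phi(1)=b$. The same construction also gives the map $J\to J_1$ witnessing that $J$ $\varepsilon$-follows $J_1$: compose the $p_k$ and sum the tail $\sum_{k\ge2}\iota_k\le\varepsilon\delta^2/(1-\delta)<\varepsilon$.

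\emph{Step 3: local quasi-arc estimate and injectivity.} Let $x,y\in J$ with $\rho(x,y)\le\varepsilon\delta^2$. Choose $n$ such that $\iota_{n+2}<\rho(x,y)\le\iota_{n+1}$, and approximate $x,y$ by points $x_n,y_n\in J_n$ at distance at most $\sum_{k>n}\iota_k\le 2\delta\iota_n$. The hypothesis $\delta\le s/(4+2S)$ ensures $\rho(x_n,y_n)<s\iota_n$, so (\ref{mackay2.1}) gives $\diam J_n[x_n,y_n]<S\iota_n$. Passing to the limit, the subarc of $J$ between $x$ and $y$ (equivalently, the $\phi$-image of the corresponding parameter interval) has diameter at most about $(4S+3\delta)\iota_n$. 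Dividing by the lower bound $\rho(x,y)>\delta^2\iota_n$ yields the constant $\frac{4S+3\delta}{\delta^2}$.

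Injectivity of $\phi$ follows from the same estimate. If two distinct parameters had the same image, the estimate at ever smaller scales would force the whole parameter interval between them to map to a set of arbitrarily small diameter, so to a single point. This contradicts the fact that the chain points on each $J_n$ are spread out, because $J_n$ is an arc and the points are distinct. Hence $\phi$ is a homeomorphism onto $J$, and $J$ is an arc. I expect this step to be the main obstacle. Making the compatible choice of chains work, so that the limit map is genuinely injective and not merely continuous, is where the precise constraint $\delta\le\min\{s/(4+2S),1/10\}$ has to be used carefully.
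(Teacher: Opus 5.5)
The paper gives no proof of this lemma; it is quoted from \cite[Lemma 2.2]{mackay}. So your argument has to stand on its own.

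Its overall architecture works, and the compatible choice of chains, which you flag as the main obstacle, is actually the easy part. Since $\rho(z,p_n(z))\le\iota_n$, for nearby $z,z'\in J_n$ you have $\rho(p_n(z),p_n(z'))\le\rho(z,z')+2\iota_n<s\iota_{n-1}$. By (\ref{mackay2.1}) for $J_{n-1}$, the subarc between the images then has diameter $<S\iota_{n-1}$. This coarse continuity gives the Hausdorff--Cauchy property. It also gives a monotone choice of partners: take the infimum, in the order of $J_n$, of the points whose $p_n$-image lies at or beyond a given chain point of $J_{n-1}$. So monotone parametrisations $\phi_n$ of $J_n$ converge uniformly to a continuous surjection $\phi\colon[0,1]\to J$.

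The genuine gap is injectivity. From $\phi(s)=\phi(t)$ you correctly get that $\phi$ is constant on $[s,t]$, but this contradicts nothing: points that are distinct on every $J_n$ can converge to a common limit. And $\phi$ really can be constant on intervals. The following condition allows $J_n$ to cut across a small detour of $J_{n-1}$ whose ends are $\iota_n$-close, with $p_n$ jumping over it. Then all chain points of $J_{n-1}$ inside the detour receive partners in a set of diameter $O(\iota_n)$, and with the monotone choice above they can even coincide. The missing idea is that injectivity is not needed. Your estimate says exactly that every fibre $\phi^{-1}(x)$ is a closed interval, i.e.\ $\phi$ is monotone. A monotone continuous surjection from $[0,1]$ with $\phi(0)\neq\phi(1)$ induces a homeomorphism onto $J$ from the quotient of $[0,1]$ collapsing these intervals, and that quotient is an arc. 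Then $J[x,y]=\phi([s,t])$ for any $s\in\phi^{-1}(x)$, $t\in\phi^{-1}(y)$.

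The passage to the limit in Step 3 must also be done differently. The drift $\|\phi_n-\phi\|_\infty$ is only controlled at order $S\iota_n$, since a partner is merely within about $S\iota_{n-1}$ of its chain point. That is too big to feed into (\ref{mackay2.1}) at level $n$, and too big to give the constant $\varepsilon$ in ``$J$ $\varepsilon$-follows $J_1$''.

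Instead, take $m\gg n$ with $\|\phi_m-\phi\|_\infty\ll\delta\iota_n$. Then $\phi([s,t])$ is Hausdorff-close to $\phi_m([s,t])=J_m[\phi_m(s),\phi_m(t)]$. Push down with the composite $q=p_{n+1}\circ\cdots\circ p_m$, along which $J_m$ follows $J_n$ with constant $\sum_{k>n}\iota_k\le\tfrac{10}{9}\delta\iota_n$. If $\rho(x,y)\le\iota_{n+1}$, then $\rho(q\phi_m(s),q\phi_m(t))\le\tfrac{29}{9}\delta\iota_n+o(1)<s\iota_n$. So (\ref{mackay2.1}) gives $\diam\phi([s,t])\le(S+3\delta)\iota_n$. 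This yields the local quasi-arc constant and, letting $n\to\infty$ when $\rho(x,y)=0$, the interval-fibre property.

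The same composite with $n=1$ gives the map $J\to J_1$ with constant $\varepsilon\delta^2/(1-\delta)<\varepsilon$. Take limits in the compact arc $J_1$ along a fixed ultrafilter, so that $p(x)$ is chosen consistently for all pairs.

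Finally, your bound $2\delta\iota_n$ on the approximation error only gives $\rho(x_n,y_n)\le5\delta\iota_n$. That is $<s\iota_n$ only when $S>1/2$; the sharper tail bound $\tfrac{10}{9}\delta\iota_n$ avoids this.
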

\begin{proof}[Proof of Lemma \ref{quasiarcs}]
    Let $\varepsilon>0$. Let $x,y$ be points that $\alpha$-avoid $\V$ for some $\alpha$. 
    By Theorem 7.2 of \cite{mackaysisto} (Theorem \ref{ms7.2}), there exists a $\lambda =\lambda(N,L,\alpha)$ and $\lambda$-quasi arc joining the points $x,y$ that $\frac1{\lambda}$-avoids $\V$. Call this quasi-arc $J'$.
    
    It remains to show that, for all $\varepsilon >0$, $J'$ $\varepsilon$-follows $J$. In the original proof of Theorem \ref{ms7.2} seen in \cite{mackaysisto}, they make use of the machinery from Proposition \ref{Mackay2.1Prop} and Lemma \ref{Mackay2.2}, in particular, the quasi-arc $J'$ is built as follows:
    
    First, we set constants as seen in the original proof: Let $D_0=\mathrm{sup}(D(V):V\in\V)$, and set $M\in Z$ be maximal such that $\rho(x,y)<D_0r^M/4L\nu$, where $\nu$ is as seen in Theorem \ref{ms7.2}, and $r$ will be fixed to be sufficiently small later on. 
    
    Following the inductive construction as in Theorem \ref{ms7.2}, we obtain a sequence of arcs $J_M,J_{M+1},...$ where each arc $J_{M+n}$ $(D_0r^M/4L)r^n$-follows $J_{M+n-1}$. Moreover, each arc in this sequence satisfies Equation (\ref{mackay2.1}) for constants $\iota, s',S'$, where $\iota = (D_0r^M/4L)r^n$, $s'=s/\nu$, and $S'=S$. The constants $s',S'$ depend only on $L,N$ given in Assumption \ref{assumptiongeneral}. In other words, the sequence of arcs satisfies the conditions of Lemma \ref{Mackay2.2}, with constants ``$\varepsilon$" replaced by $D_0r^M/4L$ and ``$\delta$" replaced by $r$.

    Next, we can take $J_M=J$ in the above sequence. In the original proof, $J_M=J_M[x,y]\in Z$ is taken to be an arc of diameter at most $L\rho(x,y)$. $J$ satisfies this property. To see this, note that if $\diam(J)> L\rho(x,y)$, we can set the constant $L$ to be big enough such that this does not occur. This is because if a space is $L$-linearly connected for some constant $L$, then by definition it will also be $L'$-linearly connected for any $L'\geq L$. 
    
    Now, as in the conclusion of Theorem \ref{ms7.2}, we obtain the quasi-arc $J'$ as the limit arc of this sequence by applying Lemma \ref{Mackay2.2}: with our new constants, this makes $J'$ into a $\frac{4S+3r}{r^2}$-quasi arc, the endpoints of $J'$ converge to $x,y$ and it $(D_0r^M/4L)r^2$-follows $J$. Since the above arguments are true for any $\nu$, we pick $\nu=1/\alpha$. Now, the above arguments hold for any $r$ that satisfies $r\leq 1/4\nu$ and $r \leq \mathrm{min}\left\{\frac{s'}{4+2S},\frac{1}{10}\right\}$. In particular, we can set $r$ to be small enough that $r^{M+2}\leq \frac{4L\varepsilon}{D_0}$, which implies that $J'$ $\varepsilon$-follows the original arc $J$.
    
\end{proof}
\section{Collections of obstacles in the boundary}\label{sec:obstacles}
In this section we verify that the boundary $\bdry G$, where $G$ is a hyperbolic, one-ended group, satisfies all properties of Assumption \ref{assumptiongeneral}. It is a standard fact that the boundary of a hyperbolic group is compact and doubling \cite[Theorem 9.2]{bonkschramm}. Hence, we must discuss the linear connectedness of the boundary, and ensure the results on the avoidable collection of obstacles; namely, that $\V$ is $L$-separated and each compact subset $V\in \V$ is $L$-porous and $L$-avoidable on scales below a scale function $D(V)$. These technical assumptions on the boundary are important, because in our setup we have the following subsets of the boundary we wish to avoid: since $g\in G$ is a infinite order, or equivalently, hyperbolic element, it has two limit points on the boundary, denoted $\{g^{\pm \infty}\}$. In order for a subset of $\bdry G$ to ``survive" the quotient and appear as a subset of $\bdry \gbar$, as required for Theorem \ref{maintheorem}, we must make sure that we can avoid the limit points $\{g^{\pm \infty}\}$ and their orbit in the boundary. Therefore, the next step will be to verify that the collection of these limit points is ``well-behaved" enough to apply the work of the previous sections.

\begin{definition}\label{obstacles}
    Let $G$ be a hyperbolic group, and let $g\in G$ be an infinite order element. Recall that $E(g)$ is the unique, maximal elementary subgroup of $G$ containing $g$. Let $\V$ be the collection consisting of all limit sets of left cosets of $E(g)$ in $\bdry G$. In particular, $\V$ consists of subsets of the form $\{hg^{\pm\infty}\}$ where $h\in G$. 
\end{definition}
The following Proposition is a special case of results by Mackay and Sisto. We assume that $G$ is a hyperbolic group, and $g\in G$ an infinite order element. The pair $(G,E(g))$ is relatively hyperbolic by Theorem \ref{elementarysubgroup}. The results of \cite{mackaysisto} require all peripheral subgroups to be hyperbolic themselves. In our case, the only peripheral subgroup is $E(g)$, which is hyperbolic since it is virtually $\mathbb{Z}$. Moreover, since we are working on the boundary, we fix a basepoint $w\in G$ for the equivalence classes of geodesic rays that define the Gromov boundary. 
\begin{proposition}\cite[Lemma 3.2, Proposition 6.4, Lemma 6.9]{mackaysisto}\label{lem:mssep}
    Let $G$ be a hyperbolic group, and $g\in G$ an infinite order element. Consider the relatively hyperbolic pair $(G, E(g))$, where $E(g)$ denotes the unique, maximal elementary subgroup containing $g$. Let $\V$ denote the collection of limit sets of left cosets of subgroups of $E(g)$ in $\bdry G$. For $V\in \V$, consider the left coset of $E(g)$ whose limit points make up $V$. Let $d_V$ denote its distance in $G$ from the basepoint $w$. Then, the following hold:
    \begin{itemize}
        \item there exists a constant $L_1$ 
          such that, for each distinct $V,V'\in \mathcal{V}$ with $d_V\geq d_{V'}$, we have that $\rho(V, V')\geq e^{-\e d_V}/L_1$, where $\rho$ denotes the distance on the boundary.\end{itemize}
    If, in addition, $\bdry G$ is connected and locally connected with no local or global cut points, then: 
    \begin{itemize}
        \item there exists a constant $L_2$ such that, for every $V\in \mathcal{V}$, $V \subset \bdry G$ is $L_2$-avoidable on scales below $e^{-\e d_V}$,
        \item there exists a constant $L_3$ such that, for every $V\in \mathcal{V}$, $V \subset \bdry G$ is $L_3$-porous on scales below $e^{-\e d_V}$.
    \end{itemize}
        \item  \label{propavoidable}

   %
%
\end{proposition}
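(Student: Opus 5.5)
The plan is to get all three bullet points by specialising the cited results of Mackay and Sisto to the relatively hyperbolic pair $(G,\{E(g)\})$; no new geometric argument is intended. The work is to check that their standing hypotheses hold here and that their objects and constants match ours. That their framework applies comes from Theorem \ref{elementarysubgroup}: $(G,\{E(g)\})$ is relatively hyperbolic. Their setting also requires hyperbolic peripheral subgroups, and $E(g)$ is virtually $\mathbb{Z}$, hence hyperbolic.

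\textbf{Identifying the objects.} Since every peripheral subgroup is hyperbolic, the space Mackay and Sisto work on is the Gromov boundary $\bdry G$. They equip it with a visual metric $\rho$ with parameter $\e$ based at $w$, so that $\rho(a,b)\asymp e^{-\e (a|b)_w}$. Their family of peripheral limit sets is the family $\{\Lambda(hE(g))\}_{h\in G}$. Because $E(g)$ is virtually cyclic, each such set is the pair $\{hg^{\pm\infty}\}$, so this family is exactly $\V$ of Definition \ref{obstacles}. Their scale $e^{-\e d_V}$ is defined using the distance $d_V$ from $w$ to the coset, as in the statement.

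\textbf{Separation.} The first bullet is the specialisation of \cite[Lemma 3.2]{mackaysisto}. To check it I would bound $(a|b)_w$ for $a\in V$, $b\in V'$. Distinct cosets are uniformly quasiconvex with uniformly bounded coarse intersection, since $E(g)$ is almost malnormal. So a geodesic $[a,b]$ passes within a uniform constant of the nearest-point projection of the coset of $V'$ onto the coset of $V$, and of the reverse projection. Using $d_V\ge d_{V'}$ and a thin-triangle argument, this yields a point of $[a,b]$ within $d_V+C$ of $w$. Hence $(a|b)_w\le d_V+C'$, and exponentiating gives $\rho(V,V')\ge e^{-\e d_V}/L_1$.

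\textbf{Avoidability and porosity.} Under the extra hypothesis that $\bdry G$ is connected, locally connected and has no local or global cut points, the second and third bullets are \cite[Proposition 6.4]{mackaysisto} and \cite[Lemma 6.9]{mackaysisto}. Porosity uses the fact that near each $V$ on scale $r<e^{-\e d_V}$, $\bdry G$ looks, after rescaling by a group element, like a neighbourhood of the two points $g^{\pm\infty}$ in a non-degenerate continuum. Avoidability uses the absence of local cut points to reroute arcs around $V$ within an annulus.

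\textbf{Main obstacle.} I expect the delicate point to be the bookkeeping. Their constants are stated for the Bowditch or Gromov boundary with their own normalisation of the visual parameter and basepoint. I must confirm that, once $w$ and $\e$ are fixed, the scale function is exactly $D(V)=e^{-\e d_V}$ up to multiplicative constants. Those constants are then absorbed into $L_1,L_2,L_3$.
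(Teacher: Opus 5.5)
Your proposal is correct and takes the same approach as the paper. The paper also gives no new argument: it specialises \cite[Lemma 3.2, Proposition 6.4, Lemma 6.9]{mackaysisto} after noting that $(G,E(g))$ is relatively hyperbolic by Theorem \ref{elementarysubgroup}, that the only peripheral subgroup $E(g)$ is virtually $\mathbb{Z}$ and hence hyperbolic, and fixing the basepoint $w$; your extra sketch of the separation bound (bounded coarse intersection of distinct cosets plus $d_{V'}\le d_V$) is sound but goes beyond what the paper supplies.
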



To ensure the properties of connectedness, local connectedness and the absence of cut points, we appeal to the work of \cite{bowditch2012relatively,grovesmanning}, as summarised in \cite{grovesmanning}:
\begin{theorem}\cite[Corollary 7.9]{grovesmanning}\label{gm7.9}
Let $G$ be hyperbolic and not virtually Fuchsian. Then, $\bdry G$ is connected with no (local or global) cut point 
if and only if $G$ has no elementary splitting.
\end{theorem}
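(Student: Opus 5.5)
This is the result cited from \cite{grovesmanning}, so I would not rebuild the theory. The plan is to assemble it from the structure theory of hyperbolic boundaries: Stallings' theorem, the results of Bestvina--Mess, Bowditch and Swarup on connectedness and global cut points, and Bowditch's JSJ decomposition read off from local cut points of $\bdry G$. I would prove the two implications separately. Throughout, ``elementary splitting'' means a non-trivial splitting over a finite or virtually cyclic subgroup.

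For the direction ``no elementary splitting $\Rightarrow$ good boundary'', I would argue in four steps.
\begin{enumerate}
\item Since $G$ has no splitting over a finite subgroup, Stallings' theorem shows $G$ has at most one end. The finite and two-ended cases are degenerate and are set aside. (A two-ended group is virtually cyclic, hence virtually Fuchsian; a finite group has empty boundary and is excluded by hypothesis or convention.) So $G$ is one-ended.
\item For one-ended $G$, $\bdry G$ is connected. It is also locally connected, by Bestvina--Mess combined with Bowditch and Swarup.
\item By \cite{bowditch1998cut,swarup1996cut}, $\bdry G$ has no global cut points.
\item It remains to rule out local cut points. Here I invoke Bowditch's theorem: if $\bdry G$ is connected with no global cut points, and is not homeomorphic to $\mathbb{S}^1$, then the presence of a local cut point yields a non-trivial splitting of $G$ over a virtually cyclic subgroup. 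This splitting comes from the canonical JSJ tree built from cut pairs. Since $G$ has no such splitting, there is no local cut point.
\end{enumerate}
The circle case is exactly where the hypothesis enters. A hyperbolic group with boundary $\mathbb{S}^1$ is virtually Fuchsian, by Tukia, Gabai and Casson--Jungreis. On the circle every point is a local cut point, yet some cocompact Fuchsian groups (e.g.\ triangle groups) have no elementary splitting. This is why ``not virtually Fuchsian'' is needed.

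For the converse, suppose $G$ splits non-trivially over an elementary subgroup $E$, and consider two cases.
\begin{enumerate}
\item If $E$ is finite, $G$ has more than one end (or is itself finite or two-ended). Then $\bdry G$ is disconnected or empty, contradicting the assumption.
\item If $E$ is virtually cyclic, its limit set $\{a,b\}$ is a pair of points. I would use the Bass--Serre tree to show that $\bdry G\setminus\{a,b\}$ is disconnected: the two sides of the edge stabilised by $E$ give a partition of the boundary. Then $\{a,b\}$ is a cut pair. In a connected, locally connected continuum, a point of a cut pair is a local cut point in Bowditch's sense, because $\bdry G\setminus\{a\}$ has more than one end at $b$. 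This again contradicts the assumption.
\end{enumerate}

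The main obstacle is the step ``local cut point $\Rightarrow$ virtually cyclic splitting''. It is a deep theorem of Bowditch (the cut-point JSJ), and I would cite it rather than reprove it. The only care needed is in checking its hypotheses: connectedness and the absence of global cut points come from the one-endedness established above, and the non-circle condition comes from the not-virtually-Fuchsian assumption.
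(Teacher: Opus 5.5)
Your proposal is correct. The paper gives no argument of its own here. It quotes \cite[Corollary 7.9]{grovesmanning}, presented as a summary of Bowditch's work. You instead unpack that black box into its standard ingredients:
\begin{itemize}
\item Stallings' theorem;
\item Bestvina--Mess, Bowditch and Swarup for connectedness, local connectedness and the absence of global cut points;
\item Bowditch's cut-pair theorem \cite{bowditch1998cut} for local cut points;
\item Tukia, Gabai and Casson--Jungreis to exclude the circle.
\end{itemize}
The citation buys brevity; your version buys transparency. In particular, it shows that the not-virtually-Fuchsian hypothesis is used only in the forward direction, to exclude $\bdry G\cong\mathbb{S}^1$, while the converse holds for every hyperbolic $G$.

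There are three small points to tidy.
\begin{enumerate}
\item Two-ended groups need no separate discussion. They split over finite subgroups, so your appeal to Stallings already excludes them, and the remark that they are virtually Fuchsian is unnecessary.
\item In the converse, the Bass--Serre partition of $\bdry G\setminus\{a,b\}$ needs two inputs you leave implicit. The first is quasiconvexity of $E$: the coboundary of the half-tree partition of $G$ lies in a bounded neighbourhood of $E$. A geodesic ray from a basepoint in $E$ to a point outside $\Lambda E$, once far enough from $E$, never returns to that neighbourhood. So each such point has a well-defined side, and both sides are open. The second is non-triviality of the splitting: each half-tree is infinite, so both sides are non-empty. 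Alternatively, you can quote the easy direction of Bowditch's theorem.
\item The end count is stated at the wrong point. It should say that $\bdry G\setminus\{b\}$ has at least two ends. This holds because both pieces of $\bdry G\setminus\{a,b\}$ accumulate at $b$, since $\bdry G\setminus\{a\}$ is connected.
\end{enumerate}
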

In fact, if the hyperbolic group $G$ is one-ended, then the boundary $\bdry G$ is always locally connected \cite{bestvinamess, bowditch1999treelike, levitt1998non, swarup1996cut}. In our setup, $G$ is one-ended by Stallings theorem; we saw in Corollary \ref{nosplitbdry} that $G$ does not admit a non-trivial elementary splitting, since $G$ does not split over elementary subgroups by assumption.

Putting everything together, we obtain the following result, which asserts that the collection of obstacles on the boundary is indeed ``well-behaved enough". This is particularly useful, as it guarantees that we can use all the results from Section \ref{quasiarcssection} to prove Theorems \ref{maincorollary}, \ref{maintheorem} and Corollary \ref{generalnonplanar}.
\begin{corollary}\label{Egconditions}
     Let $G$ be a one-ended hyperbolic group such that $\bdry G$ is non-planar, and let $g\in G$ be an infinite order element. Let $\V$ be the collection consisting of all limit sets of left cosets of $E(g)$ in $\bdry G$. Then, $\V$ is $L$-separated, where $L$ denotes the $L$-linear connectedness constant of the Gromov boundary $\bdry G$. 
     
     Moreover, each compact subset $V\in \V$ is $L$-porous and $L$-avoidable on scales below $D(V)$, with scale function $D(V)\colon \mathcal{V}\to \bdry G$ defined by $D(V)=e^{-\varepsilon d_V}$ for all $V \in \mathcal{V}$. Here, $d_V$ denotes the distance between the basepoint $w$ of the geodesic rays that define $\bdry G$ and the left coset of $E(g)$ whose limit points make up $V\in \V$. 
\end{corollary}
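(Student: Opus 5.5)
The plan is to check the hypotheses of Proposition \ref{lem:mssep} and then unify the constants. First, the boundary needs to be connected, locally connected, and free of local or global cut points. Since $G$ is one-ended, $\bdry G$ is connected and locally connected by \cite{bestvinamess, bowditch1999treelike, levitt1998non, swarup1996cut}, and has no global cut points by \cite{bowditch1998cut,swarup1996cut}. To rule out local cut points I would use Theorem \ref{gm7.9}, which needs $G$ not virtually Fuchsian. That holds because $\bdry G$ is non-planar: a virtually Fuchsian group has boundary a circle (or a Cantor set, or empty), and all of these are planar. So Theorem \ref{gm7.9} applies once we know $G$ has no elementary splitting. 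I expect this to be the main obstacle. Non-planarity together with one-endedness does not immediately forbid a splitting over a two-ended subgroup: such a splitting only produces local cut points, and a local cut point alone does not obviously make the boundary planar. Following the remark after Theorem \ref{gm7.9}, I would try first to use the standing assumption (as in Corollary \ref{nosplitbdry}) that $G$ admits no non-trivial elementary splitting, and treat the corollary as carrying that assumption implicitly. Failing that, I would try to argue that non-planarity of $\bdry G$ forces the absence of elementary splittings in the intended setting, namely Menger curve boundaries via \cite{kapovichkleiner}. Either way, once there are no elementary splittings, Theorem \ref{gm7.9} gives no local or global cut points.

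Next, apply Proposition \ref{lem:mssep} to the relatively hyperbolic pair $(G,E(g))$ from Theorem \ref{elementarysubgroup}; its peripheral subgroup $E(g)$ is virtually $\mathbb{Z}$ and hence hyperbolic. This yields constants $L_1,L_2,L_3$. It also yields $L$-linear connectedness of $\bdry G$ for some $L$, which I would quote from \cite{mackaysisto} (one-ended hyperbolic boundaries with no cut points are linearly connected, Bonk--Kleiner). Each $V=\{hg^{\pm\infty}\}$ is finite, hence compact.

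For separation, set $D(V)=e^{-\e d_V}$. Take distinct $V,V'$ and, without loss of generality, $d_V\geq d_{V'}$. Then $\min\{D(V),D(V')\}=e^{-\e d_V}$, so
\[
\Delta(V,V')=\frac{\rho(V,V')}{e^{-\e d_V}}\geq \frac{1}{L_1}.
\]
Hence $\V$ is $L_1$-separated.

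Finally, replace every constant by the single constant $L:=\max\{L,L_1,L_2,L_3,10\}$. All of the relevant properties are monotone in their constant. Linear connectedness with constant $L$ implies it for any $L'\geq L$, as already used in the proof of Lemma \ref{quasiarcs}. A $C$-porous set is $C'$-porous for $C'\ge C$, since $r/C\ge r/C'$. Separation weakens as $1/L$ decreases. For avoidability, enlarging $L$ shrinks the allowed range $r<\delta/2L$ and enlarges the annulus $A(V,r/L,2rL)$ and the following constant $4rL$, so an arc produced with $L_2$ also works for larger $L$. This gives all the conclusions with one common constant $L$ and scale function $D(V)=e^{-\e d_V}$.
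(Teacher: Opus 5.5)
Your proposal follows the paper's proof: non-planarity rules out virtually Fuchsian, Theorem \ref{gm7.9} gives no local or global cut points, Proposition \ref{lem:mssep} supplies $L_1,L_2,L_3$, and $L$ is enlarged to dominate them. Your explicit computation of $\Delta$ and your monotonicity checks for porosity, separation and avoidability are more careful than the paper, which only remarks that enlarging $L$ preserves linear connectedness.

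You are also right that the absence of elementary splittings does not follow from one-endedness and non-planarity. It must be imported as the standing hypothesis, which is exactly what the paper does implicitly. The statement fails without it: take an amalgam of two torsion-free Menger-curve groups along maximal cyclic subgroups and let $g$ generate the edge group. Then $\{g^{\pm\infty}\}$ is a cut pair in $\bdry G$ and cannot be avoided. So your fallback only works when $\bdry G$ is already known to have no local cut points, as for the Menger curve.
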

\begin{proof}
   First, every subset of $\V$ is compact. To see this, recall that a subset in $ \V$ consists of two points, namely $\{hg^{\pm\infty}\}$ for $h\in G$. Proposition \ref{lem:mssep} ensures that there is a constant for which the limit sets of left cosets of $E(g)$ are $L_1$-separated in the boundary of $G$. In addition, $G$ is not virtually Fuchsian, since by assumption $\bdry G$ is non-planar; it is a standard result that virtually Fuchsian groups have planar boundary. Using Theorem \ref{gm7.9}, $G$ is locally connected with no (local or global) cut points. Hence, we can again use Proposition \ref{lem:mssep} to obtain that there exist constants $L_2,L_3$ such that every $V\in \V$ is $L_2$-avoidable and $L_3$-porous on scales below $D(V)$. We require the collection to be $L$-separated, $L$-porous and $L$-avoidable, where $L$ is the linear connectedness constant. To ensure this, we can safely assume $L \geq \mathrm{max}\{L_1,L_2,L_3\}$, since increasing $L$ does not affect the $L$-linear connectedness condition.
\end{proof}
\section{Mapping graphs into the quotient boundary}\label{bdrymapsection}
In the previous two sections, we defined a collection of obstacles on the boundary and established a notion of avoiding them. Namely, given a hyperbolic group $G$ and an infinite order element $g$, the obstacles are defined as the points on the boundary corresponding to the left cosets of $E(g)\leq G$, where $E(g)$ is the maximal elementary subgroup containing $g$. This was done to ensure that graphs on the boundary of $G$ can survive the quotient and appear in the boundary of the quotient group. In this section, we prove that avoiding the above set of obstacles is indeed enough to appear as a graph in the quotient boundary. 

Following the notation set in Definition \ref{obstacles}, let $G$ be a hyperbolic, one ended group, and $g\in G$ an infinite order element. We will use the notation $\gbar$ for $G\big/ \langle \langle g^n \rangle\rangle$ for some $n$ and $\pi\colon G\to \gbar$ for the quotient map. Recall that $(G,E(g))$ is a relatively hyperbolic group by Theorem \ref{elementarysubgroup}. 
Let $\mathcal{V}$ denote the representatives on the boundary of left cosets of $E(g)$, namely sets of the form $\{hg^{\pm\infty}\}$ for $h\in G$. 

Next, we give a precise definition of the subset of the boundary that consists of the points that avoid obstacles. For some fixed $\lambda$, let $\bdry_{\lambda}G$ denote the subset of $\bdry G$ that consists of points that $\lambda$-avoid $V$, for all $V\in \V$ (Definition \ref{avoids}). By definition of the Gromov boundary, these points correspond to a collection of geodesic rays in $G$, based at some fixed basepoint $w$.

In general, there is no map between $\bdry G$ and $\bdry \gbar$ induced by the quotient. However, we will show that there exists a topological embedding from $\bdry_{\la}G$ to $\bdry \gbar$. As we will see in Section \ref{sec:6}, given a finite simplicial metric graph $\G$ of valence $\leq 4$ in $\bdry G$, we can construct a new topological embedding of (a graph with minor) $\G$ in $\bdry_{\la}G$ and thus, using the results in this section, in $\bdry \gbar$.
\subsection{Transversal maps}
We first describe the notion of a \textit{transversal} quasi-isometric embedding; this will ensure that the rays in $G$ corresponding to $\bdry_{\lambda}G$ ``have bounded interactions'' with left cosets of $E(g)$. 

\begin{definition}\cite[Definition 2.8]{mackaysisto}
    Let $(G,\Psub)$ be a relatively hyperbolic group, and $Y$ a metric space. Given some function $\eta\colon[0,\infty)\to[0,\infty)$, we say that a quasi-isometric embedding $f\colon Y\to G$ is an $\eta$-transversal if for all $M\geq 0$, $g\in G$, $P\in \Psub$, $$\diam (f(Y)\cap N(gP,M)\leq \eta(M).$$
\end{definition}
Since $G$ is hyperbolic relative to $E(g)$, transversal geodesics in this setting correspond to exactly the geodesics with bounded interactions with the left cosets of $E(g)$. The following lemma is implicit in the proof of \cite[Proposition 3.5]{mackaysisto}, see, in particular, Equation (3.6) therein.
\begin{lemma}\label{transversal}
    For all $\lambda$, there exists $\eta\colon [0,\infty)\to[0,\infty)$ such that all $p\in \bdry_\lambda G$ are represented by an $\eta$-transversal geodesic ray in $G$ starting at $w$.
\end{lemma}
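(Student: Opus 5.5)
To prove Lemma \ref{transversal}, I would fix $\lambda$ and $p \in \bdry_\lambda G$, together with a geodesic ray $\gamma$ from $w$ representing $p$. The task is to bound $\diam(\gamma \cap N(hE(g), M))$ by a quantity $\eta(M)$ that depends only on $M$ and $\lambda$.

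The first step uses the visual metric. Fix a visual parameter $\e$ with $\rho(a,b) \asymp e^{-\e (a\cdot b)_w}$, which is the same $\e$ appearing in Proposition \ref{lem:mssep}. Suppose $\gamma$ passes within distance $M$ of two points $u, v$ of a coset $hE(g)$, and these points are far apart. Because $hE(g)$ is quasi-convex and lies uniformly close to a bi-infinite geodesic $\ell$ from $hg^{-\infty}$ to $hg^{+\infty}$, thin triangles imply that $\gamma$ fellow-travels $\ell$ along a segment of length roughly $d(u,v) - C(M)$. Let $t$ be the time at which $\gamma$ leaves this segment. One endpoint of $\ell$, call it $\xi \in V = \{hg^{\pm\infty}\}$, is the endpoint in the direction in which $\gamma$ travels along $\ell$. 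Then the Gromov product satisfies $(p \cdot \xi)_w \geq t - C(M)$.

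The second step turns this into a bound. Since $p$ $\lambda$-avoids $V$, we have
\[ e^{-\e (p\cdot \xi)_w} \gtrsim \rho(p,\xi) \geq 1/\lambda, \]
so $t \leq \frac{1}{\e}\log\lambda + C'(M)$. The fellow-travelling segment lies in $\gamma[0,t]$, so its length is at most $t$. This bounds $\diam(\gamma \cap N(hE(g),M))$ by $\frac{1}{\e}\log\lambda + C''(M)$, which is uniform in $h$. This is essentially Equation (3.6) of \cite[Proposition 3.5]{mackaysisto}.

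The main obstacle is the orientation subtlety: $\gamma$ could run along $\ell$ toward one endpoint and then diverge. Avoiding that endpoint controls the divergence time, and hence the overlap, so this case is handled. The constants from quasi-convexity and Morse stability must depend only on $M$ and $\delta$, never on $h$; equivariance under left multiplication by $h$ guarantees this. Finally, I would set $\eta(M)$ equal to the resulting bound, taking the supremum over both directions of travel.
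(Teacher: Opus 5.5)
Your mechanism matches the argument behind Equation (3.6) of \cite{mackaysisto}, which is all the paper gives as a proof. A long overlap of $\gamma$ with the axis $\ell$ of $hE(g)$ forces a large Gromov product $(p\cdot\xi)_w$ with the endpoint $\xi\in V$ toward which $\gamma$ travels, and avoidance caps that product. Your treatment of orientation and of uniformity in $h$ via left translation is fine.

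The gap is in the last step. You use the uniform bound $\rho(p,\xi)\ge 1/\lambda$ from Definition~\ref{avoids} and then bound the overlap length by the exit time $t$. Read literally, that definition makes the lemma vacuous. Since $G$ is non-elementary, the orbit $G\cdot g^{+\infty}\subseteq\bigcup\V$ is dense in $\bdry G$. Hence $\rho(p,\bigcup\V)=0$ for every $p$, so $\bdry_\lambda G=\emptyset$; for the same reason Proposition~\ref{MS7.3} could not hold as written. The notion the paper actually needs (presumably the one Mackay--Sisto use) must be measured against the scale function: $\rho(p,V)\ge D(V)/\lambda=e^{-\e d_V}/\lambda$.

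Under that hypothesis your inequality gives only $t\le d_V+\frac{1}{\e}\log\lambda+C'(M)$. This grows with $d_V$, so it is not uniform in $h$, and ``the overlap has length at most $t$'' no longer produces a transversality function. What is missing is a lower bound on where the overlap starts. If $\gamma(s)$ is the first point of $\gamma\cap N(hE(g),M)$, then $s=d(w,\gamma(s))\ge d(w,hE(g))-M=d_V-M$. So the overlap has length at most $t-s\le\frac{1}{\e}\log\lambda+C'(M)+M$, uniformly in $h$. With this one addition your argument proves the non-vacuous version of the lemma, which is the one needed for Proposition~\ref{prop:bdrymap} and the main theorem.
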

The following special case of \cite[Proposition 2.12]{mackaysisto} shows that the image of an $\eta$-transversal geodesic ray in $G$ will be a quasi-geodesic ray in $\gbar$, which we will use to define the boundary map. Note, the constant for the quasi-isometric embedding in \cite[Proposition 2.12]{mackaysisto} is not quantified in the statement of the result, but careful tracking of the constant dependencies throughout the proof supply the quantifiers written below.
\begin{proposition}\label{qgeodesictransversal}
For all hyperbolic groups $G$, all infinite order $g\in G$, for all transversality functions $\eta\colon [0,\infty)\to [0,\infty)$, there exist $K, N_0>0$ such that for all multiples $n$ of $N_0$, for $\gbardef$ and $\gamma$ an $\eta$-transversal geodesic ray (resp. line) in $G$, we have that $\pi \circ \gamma$ is a $K$-quasi geodesic (resp. line) in $\gbar$.
\end{proposition}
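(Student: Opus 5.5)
This is a special case of \cite[Proposition 2.12]{mackaysisto}, so the plan is to reproduce that argument while tracking which constants depend on what. The tool is a local-to-global principle for quasi-geodesics in the hyperbolic group $\gbar$. Fix a hyperbolicity constant for $\gbar$ that is uniform in $n$; this is available from the Dehn filling theory of Section \ref{sec:2} for all sufficiently long fillings, which by Remark \ref{dehnsetup} means all multiples $n$ of a suitable $N_0$. It then suffices to show that $\pi\circ\gamma$ is a $C$-local $K'$-quasi-geodesic, where $C$ is as large as the local-to-global lemma requires and $C,K'$ depend only on $G,g,\eta$. That lemma then upgrades $\pi\circ\gamma$ to a global $K$-quasi-geodesic.

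\textbf{Step 1: the Lipschitz bound.} The map $\pi$ is $1$-Lipschitz with respect to the word metrics on $G$ and on $\gbar$ (the latter using the generating set $\pi(S)$). Hence $d_{\gbar}(\pi\gamma(s),\pi\gamma(t))\le |s-t|$, which is one side of the quasi-geodesic inequality.

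\textbf{Step 2: local injectivity of $\pi$ on neighbourhoods of $\gamma$.} By the standard Dehn filling results (Osin; Groves--Manning), for every $R$ there is a finite set $\mathcal B$ such that, for fillings avoiding $\mathcal B$, the map $\pi$ is injective on every ball $B(x,R)$ in $G$. I would take $R$ of order $C$, and enlarge $N_0$ so that no nontrivial power $g^{kn}$ lies in $\mathcal B$. Then any subsegment of $\gamma$ of length at most $C$ maps isometrically on scale $C$. The remaining issue is shortcuts in $\gbar$ that are invisible in balls of $G$ of radius $R$.

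\textbf{Step 3: ruling out shortcuts, which I expect to be the main obstacle.} Suppose $\bar\sigma$ is a $\gbar$-geodesic between $\pi\gamma(s)$ and $\pi\gamma(t)$ with $|s-t|\le C$, and $\bar\sigma$ is much shorter than $|s-t|$. Lift the geodesic triangle formed by $\bar\sigma$ and $\pi\gamma[s,t]$ to $G$. The difference between the lift and $\gamma$ is a product of conjugates of $g^n$, so a van Kampen / Greendlinger-type argument for the filling applies. This is the ``large pieces'' lemma of Groves--Manning or Dahmani--Guirardel--Osin: some relator coset $hE(g)$ has a long subpath of $g^n$-type that fellow-travels a long piece of $\gamma$. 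Quantitatively, $\gamma$ must come within a uniform constant $M$ of $hE(g)$ along a segment of length comparable to $n$ times the translation length of $g$. Transversality bounds that length by $\eta(M)$. So once $n$ is a multiple of $N_0$ chosen so that $n\,\tau(g)\gg\eta(M)$, no such shortcut exists. This gives the lower bound $d_{\gbar}\ge |s-t|/K'-K'$ locally. The delicate part is keeping $M$ independent of $n$, which requires the uniform hyperbolicity and the relative hyperbolicity of $(G,E(g))$ from Theorem \ref{elementarysubgroup}.

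\textbf{Step 4: conclusion.} Apply the local-to-global lemma to obtain the uniform constant $K$. The argument is identical for bi-infinite geodesic lines, since transversality is used only on finite subsegments.
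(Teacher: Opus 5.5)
Your Step 4 relies on $\mathrm{Cay}(\gbar,\pi(S))$ being $\delta$-hyperbolic with $\delta$ independent of $n$. This is false, and the argument breaks there.

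Take $G=F(a,b)$ and $g=a$. Then $\gbar\cong\mathbb{Z}/n*\mathbb{Z}$, and the image of $\langle a\rangle$ is an isometrically embedded cycle of length $n$ in the Cayley graph. So the hyperbolicity constant of that Cayley graph is at least of order $n$. The local-to-global threshold $C$ must therefore grow at least linearly in $n$. Step 2, however, can never make $\pi$ injective on balls of radius $\geq n|g|_S$, because $g^n\mapsto 1$. So you cannot fix $C$ first and then choose $N_0$.

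A quick check shows the architecture itself is broken. Once $\pi$ is injective on $B(1,2C)$, path lifting already makes $\pi\circ\gamma$ a $C$-local geodesic for \emph{every} geodesic $\gamma$, transversal or not. So Step 3 is vacuous at that scale. If Step 4 were valid, it would make the image of the axis of $g$ a quasi-geodesic, yet that image is bounded. Transversality has to be used at scales comparable to $n\,\tau(g)$, and your argument never reaches those scales.

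The paper gives no argument of its own beyond citing Proposition 2.12 of Mackay--Sisto with the constants tracked. Here is one way to make your plan work. Run the local-to-global step in a space where the fillings are uniformly hyperbolic, for instance the Groves--Manning cusped space $X(\gbar)$ of $(\gbar, E(g)/\langle g^n\rangle)$. For sufficiently long fillings this space is $\delta'$-hyperbolic with $\delta'$ independent of $n$. The quotient map $X(G)\to X(\gbar)$ is also an isometry on $R$-balls centred at group elements, once the filling is long enough relative to $R$.

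Transversality is then exactly what makes $\gamma$ a uniform quasi-geodesic in $X(G)$. A cusped-space geodesic between points of $\gamma$ can only gain by cutting through a horoball along a stretch where $\gamma$ stays $M$-close to a coset $hE(g)$, and such stretches have length at most $\eta(M)$. This is where $\eta$ enters, and it fixes the order of choices: $\eta$, then the quasi-geodesic constants, then $R$, then $N_0$. Local-to-global in $X(\gbar)$ gives $d_{X(\gbar)}(\pi\gamma(s),\pi\gamma(t))\geq |s-t|/K-K$. Since $d_{\gbar}\geq d_{X(\gbar)}$ on group elements, this together with your Step 1 gives the statement.

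Alternatively, you could make Step 3 global, for all $s,t$, and drop local-to-global entirely. You would then need a Greendlinger-type lemma with constants uniform in $n$. You would also have to rule out the long relator piece lying on the lift of $\bar\sigma$, or being split between the two sides of the bigon; your sketch does not address this.
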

\subsection{The boundary map} Before introducing the boundary map, we first fix some notation for the boundary. Fix a basepoint $w\in G$. Let $\gamma_1,\gamma_2 \colon [0,\infty)\to G$ be two geodesic rays in $G$ starting at $w$, and $t$ be such that $\gamma_1(t)$ and $\gamma_2(t)$ are the last points along the geodesic rays that satisfy $d_G(\gamma_1(t),\gamma_2(t))\leq 2\delta$. Given the setup of the previous sentence, the \textit{Gromov product} $(\gamma_1|\gamma_2)_w=t$. When working in $\bdry \gbar$, we define the Gromov product in an analogous way and use the same notation. The Gromov boundary is equipped with a \textit{visual metric} $\rho$, that satisfies $\rho(\gamma_1,\gamma_2) \asymp \mathrm{exp}(-\varepsilon (\gamma_1|\gamma_2)_w)$. To be more precise, the visual metric is bi-Lipshitz to $\mathrm{exp}(-\varepsilon (\gamma_1|\gamma_2)_w)$. We will write $\rho_{G}$ and $\rho_{\gbar}$ when referring to the visual metric in $\bdry G$ and $\bdry \gbar$, respectively. 

We can now introduce the map from $\bdry_\lambda G$ to $\bdry \gbar$. Let $\varphi\colon \bdry_\lambda G\to \bdry \gbar$ be the map defined as follows: letting $p\in \bdry_\lambda G$, let $\gamma_p$ be a geodesic ray in $G$ representing $p$. By Proposition \ref{qgeodesictransversal}, $\pi\circ \gamma_p$ is a quasi-geodesic ray in $\gbar$. Let $\varphi(p)$ be a geodesic in $\gbar$ which is bounded Hausdorff distance from $\pi \circ \gamma_p$. Note, since $G$ is hyperbolic, such a geodesic exists by stability of quasi-geodesics. 

\begin{proposition}\label{prop:bdrymap}
    Let $\lambda >0$. The map $\varphi\colon \bdry_\lambda G \to \bdry \gbar$ is well-defined, injective and continuous. 
\end{proposition}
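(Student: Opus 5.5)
The plan is to verify the three properties in turn. Each reduces to comparing Gromov products in $G$ and in $\gbar$ along transversal rays, using Lemma \ref{transversal} and Proposition \ref{qgeodesictransversal}. First fix $\lambda$. Let $\eta$ be the transversality function supplied by Lemma \ref{transversal}. Let $K,N_0$ be the constants supplied by Proposition \ref{qgeodesictransversal}, and restrict to $n$ a multiple of $N_0$.

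\textbf{Well-definedness.} For $p\in\bdry_\lambda G$ the image $\pi\circ\gamma_p$ is a $K$-quasi-geodesic ray in the hyperbolic group $\gbar$. By Morse stability it lies at bounded Hausdorff distance from a geodesic ray, so it determines a unique point of $\bdry\gbar$; the choice of geodesic in the definition of $\varphi$ does not matter. For independence of the representative $\gamma_p$, take two $\eta$-transversal rays from $w$ representing $p$. They are at Hausdorff distance at most $C(\delta)$ in $G$. Since $\pi$ is $1$-Lipschitz, their images are at Hausdorff distance at most $C(\delta)$ in $\gbar$, and hence converge to the same boundary point.

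\textbf{Continuity.} The aim is a lower bound on Gromov products in $\gbar$ in terms of those in $G$. Take $p,q\in\bdry_\lambda G$ with $(\gamma_p|\gamma_q)_w=t$. Then $d_G(\gamma_p(s),\gamma_q(s))\le 2\delta$ for $s\le t$, and the same bound holds after applying $\pi$. So $\pi\circ\gamma_p$ and $\pi\circ\gamma_q$ are $K$-quasi-geodesics from $\pi(w)$ that fellow travel up to time $t$. By stability, the geodesic rays $\varphi(p),\varphi(q)$ fellow travel up to time at least $t/K-C$, where $C=C(K,\hat\delta)$ and $\hat\delta$ is the hyperbolicity constant of $\gbar$. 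Therefore $(\varphi(p)|\varphi(q))\ge t/K-C$, which gives
\[
\rho_{\gbar}(\varphi(p),\varphi(q))\lesssim \rho_G(p,q)^{1/K}.
\]
Thus $\varphi$ is Hölder, and in particular continuous.

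\textbf{Injectivity.} This is the main obstacle, because $\pi$ can collapse distances. Take $p\neq q$ in $\bdry_\lambda G$ and form the bi-infinite geodesic $\ell$ in $G$ from $p$ to $q$. I will argue that $\ell$ is $\eta'$-transversal for some $\eta'$ depending only on $\eta$ and $\delta$. The reason is that $\ell$ lies in the $2\delta$-neighbourhood of $\gamma_p\cup\gamma_q$, so $\diam(\ell\cap N(hE(g),M))$ is bounded by a sum of terms of the form $\eta(M+2\delta)$. This needs care: I would also have to control the portion of $\ell$ near the branch point, perhaps by choosing $n$ uniformly large relative to $\eta'$. Proposition \ref{qgeodesictransversal} in its line version (with constants for $\eta'$) then shows that $\pi\circ\ell$ is a $K'$-quasi-geodesic line in $\gbar$. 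Its two ends are at bounded distance from $\varphi(p)$ and $\varphi(q)$ respectively: the subrays of $\ell$ are asymptotic to $\gamma_p,\gamma_q$ up to a finite segment, and that segment stays finite after applying $\pi$. A quasi-geodesic line in a hyperbolic space has two distinct endpoints, so $\varphi(p)\ne\varphi(q)$.

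The step I expect to be hardest is making the transversality of $\ell$ uniform, so that a single choice of $N_0$ works for all pairs simultaneously.
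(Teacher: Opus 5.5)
Your arguments for well-definedness and H\"older continuity are the paper's. So is your strategy for injectivity: show the bi-infinite geodesic $\ell$ from $q$ to $p$ is $\eta'$-transversal, then apply the line version of Proposition \ref{qgeodesictransversal}. The gap is in the transversality of $\ell$.

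From $\ell\subset N(\gamma_p\cup\gamma_q,2\delta)$ alone you can bound the diameter of the part of $\ell\cap N(hE(g),M)$ lying near $\gamma_p$, and separately the part lying near $\gamma_q$, each by about $\eta(M+2\delta)+4\delta$. Nothing bounds the distance between these two pieces, so the ``sum of terms'' estimate does not follow. A priori $\ell$ could meet $N(hE(g),M)$ far out along $\gamma_q$, leave it around the branch point, and meet it again far out along $\gamma_p$. Your suggested remedy of choosing $n$ large cannot help. $\eta'$-transversality is a statement about $\ell$ inside $G$ and does not involve $n$ at all.

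The missing ingredient is quasiconvexity of the cosets $hE(g)$, which the paper uses. It comes from relative hyperbolicity, or directly from the fact that $\langle g\rangle$ is quasi-isometrically embedded and has finite index in $E(g)$. With it, for $a,b\in\ell\cap N(hE(g),M)$ the whole segment $\ell[a,b]$ lies in $N(hE(g),M')$, with $M'\geq M$ depending only on $M$, $\delta$ and the quasiconvexity constant.

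Now suppose $a$ is near $\gamma_p$ and $b$ is near $\gamma_q$. The closed sets of points of $\ell$ within $2\delta$ of $\gamma_p$, resp.\ $\gamma_q$, cover $\ell$, so by connectedness there is $c\in\ell[a,b]$ within $2\delta$ of both rays. Transversality of $\gamma_p$ and $\gamma_q$ then gives $d_G(a,c)\le\eta(M'+2\delta)+4\delta$ and $d_G(c,b)\le\eta(M'+2\delta)+4\delta$. The paper's midpoint argument does the same job.

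The resulting $\eta'(M)=2\eta(M'+2\delta)+8\delta$ depends only on $\eta$, $\delta$ and the quasiconvexity constant, not on $p,q$. So the uniformity you single out as the hardest step is automatic: one $N_0(\eta')$ serves all pairs. Taking $n$ a common multiple of $N_0(\eta)$ and $N_0(\eta')$ makes continuity and injectivity hold simultaneously.
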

\begin{proof}
Fix $\lambda >0$. Let $\delta$ denote the hyperbolicity constant of $G$, and we increase it so that ideal triangles (some of whose vertices can be on $\bdry G$) are also $\delta$-slim. Let $p,q\in \bdry_\lambda G$, and $\gamma_p,\gamma_q$ be geodesic rays in $G$ representing $p,q$, respectively. 
\\\\
\textit{$\varphi$ is well-defined.} Assume $p=q$. Then, the geodesic rays $\gamma_p,\gamma_q$ are bounded Hausdorff distance from each other. Since the quotient map $\pi$ is 1-Lipschitz, the same is true for $\pi\circ \gamma_p, \pi\circ\gamma_q$. By construction of $\varphi$, the representatives of their images $\varphi(p),\varphi(q)$ will also be bounded Hausdorff distance from each other, hence $\varphi(p)=\varphi(q)\in \bdry \gbar$.
\\\\
\textit{Continuity.} We will show that the map $\varphi$ is H{\"o}lder continuous; there exist $A,\alpha$ such that $\rho_{\gbar}(\varphi(p),\varphi(q))\leq A\rho_{G}(p,q)^\alpha$ for all $p,q\in \bdry_\lambda G$.

Let $(\gamma_p|\gamma_q)_w=t$, and $x=\gamma_p(t), y=\gamma_q(t)$. Since $\pi$ is 1-Lipschitz, $d_{\gbar} (\pi(x),\pi(y))\leq 2\delta$. In addition, since $\pi\circ \gamma_p,\pi\circ \gamma_q$ are $K$-quasi-geodesics, $d_{\gbar}(w,x)=d_{\gbar}(w,y)\geq (\gamma_p|\gamma_q)_w/K-K$. The quasi-geodesics $\pi\circ \gamma_p,\pi\circ \gamma_q$ are of uniform bounded distance $R$ from the geodesics $\varphi(p),\varphi(q)$. Hence, for two points $x'= \varphi(p)(t'), y'= \varphi(q)(t')$, we have $d_{\gbar}(x',y')\leq 2R+2\delta$, and $t'\geq (\gamma_p|\gamma_q)_w/K-K-R$. 

The Gromov product has the following standard property: if two points along $\varphi(p),\varphi(q)$ are bounded distance from each other, then there exists $D'$ such that $(\varphi(p)|\varphi(q))_{\pi(w)}\geq D-D'$, where $D$ is the distance of those points from the basepoint $\pi(w)$. Using this, we obtain that there exists $D'$ such that $(\varphi(p)|\varphi(q))_{\pi(w)}\geq t/K-K-R-D'$. Therefore, taking $\rho_{\gbar}(\varphi(p),\varphi(q))\asymp \mathrm{exp}(-\varepsilon (\varphi(p)|\varphi(q))_{\pi(w)})$, it follows that: $$\rho_{\gbar}(\varphi(p),\varphi(q))\leq \mathrm{exp}(-\e(t/K-K-R-D))=A\,\mathrm{exp}(-\varepsilon(\gamma_p|\gamma_q)_w)^\alpha \asymp A\rho_G(p,q)^\alpha$$ where $A= \mathrm{exp}(K+R+D)$ and $\alpha =1/K$.
\\\\
\textit{Injectivity.} Let $p,q\in \bdry_\lambda G$ be distinct points. For any two such points on the boundary, there exists a bi-infinite geodesic ray $\gamma_{pq}\colon(-\infty,\infty)\to G$ connecting them, such that $\gamma_{pq}|_{[0,\infty)}$ fellow-travels with $\gamma_p$, and $\gamma_{pq}|_{(-\infty,0]}$ with $\gamma_q$.  
To show injectivity, it suffices to show that $\gamma_{pq}$ is an $\eta'$-transversal for some $\eta'$, since, by Proposition \ref{qgeodesictransversal}, the transversals $\gamma_p,\gamma_q,\gamma_{pq}$ will then be mapped to quasi-geodesics in $\gbar$ and hence $\varphi(p)\neq \varphi(q)$.

 Now we show that there exists $\eta'$ such that $\gamma_{p,q}$ is an $\eta'$-transversal:  let $g\in G$ be an infinite order element, and recall that $(G,E(g))$ is relatively hyperbolic. We will show that there exists a function $\eta'$ such that for all $M\geq 0, h\in G$, $\diam (\gamma_{pq}\cap N(hE(g),M))\leq \eta'(M)$.
 
 Consider the semi-ideal geodesic triangle $\gamma_p\cup\gamma_{pq}\cup \gamma_q$. By our choice of $\delta$, we have $\gamma_{p,q}\subset N(\gamma_p\cup\gamma_q,\delta)$. Now, assume $[x,y]\subset \gamma_{pq}$ is contained in the neighbourhood $N(hE(g),M)$ for some $M>0$. Let $x',y' \in \gamma_p\cup \gamma_q$ be $\delta$-close to $x,y$, respectively. We will assume that $x',y'$ lie on different geodesic rays $\gamma_p,\gamma_q$ respectively. Let $m$ be the midpoint of $[x,y]$ and assume, without loss of generality, that it is $\delta$-close to some point $m'\in\gamma_p$. There exists some constant $B$ such that any geodesic with endpoints $x',m'$ will be contained in $N(hE(g), B(M+\delta))$. This follows from the fact that $G$ is hyperbolic relative to $E(g)$, and hence the collection of left cosets of $E(g)$ is quasi-convex \cite[Theorem 4.1, Lemma 4.3, Theorem 8.5]{drutusapir}. Since $\gamma_p\in G$ is an $\eta$-transversal geodesic ray, we have that the length of the geodesic satisfies $[x',m']\leq \eta (\delta B +MB)$. Putting this all together, $$[x,y]=2[x,m]\leq d_G(x,x')+[x',m']+d_G(m',m)\leq 2\delta + \eta (\delta B +MB)$$ for any $[x,y]\subset \gamma_{pq}$ that is contained in $N(hE(g),M)$. An analogous argument holds if $x,y$ are both close to the same geodesic ray; to see this, note that the argument for using the midpoint $m$ can be avoided, and the rest of the argument is similar, taking $y,y'$ in place of $m,m'$. Hence, setting $\eta'(M)=2\delta + \eta (\delta B +MB)$, we get that for all $h\in G, M\geq 0$, $\diam (\gamma_{pq}\cap N(hE(g),M)\leq \eta'(M)$.  
\end{proof}
The above proposition is exactly what is required to map graphs in $\bdry G$ to graphs in $\bdry \gbar$. It guarantees that, as long as the points on $\bdry G$ that make up the graph are contained in $\bdry_\lambda G$ for some $\lambda$, the map $\varphi$ will map them continuously and injectively to points in $\bdry \gbar$ that make up a graph in the quotient boundary.
\section{Graphs}\label{sec:6}
In order to prove Theorem \ref{maintheorem} and, in turn, Theorem \ref{maincorollary} and Corollary \ref{generalnonplanar}, we need to show that when $K_{3,3}$ or $K_5$ (Figure \ref{fig:K33}) embeds in $\bdry G$, they can be modified to avoid a collection of obstacles and contain the original graph as an embedded minor. In fact, we will prove a more general result: we can do this for any finite simplicial graph of valence at most 4.

Given a graph in a metric space $Z$ and collection of obstacles $\V$ satisfying Assumption \ref{assumptiongeneral} as defined in Section \ref{quasiarcssection}, Lemma \ref{endpoints} can be applied to vertices to obtain new endpoints that $\alpha$-avoid $\V$ for some $\alpha$, and, for all $\varepsilon>0$, each edge between them will yield a new arc that is contained in its $\e$-neighbourhood. Next, applying Lemma \ref{quasiarcs} will yield a $\lambda$-quasi arc for each edge, which $\lambda$-avoids $\V$ for some $\lambda$. The following shows that we have some control over the output of this procedure:
\begin{lemma}\label{lem:creatine}
    Let $(Z,\rho)$ be an $N$-doubling, $L$-linearly connected, compact metric space. Let $\V$ be an $L$-separated collection of compact subsets $V\subseteq Z$ and $D\colon  \V \to (0, \infty)$ be a scale function, such that $V \in \V$ is $L$-porous and $L$-avoidable on scales below $D(V)$ (Assumption \ref{assumptiongeneral}). Let $\G$ be a finite metric simplicial graph of valence at most 4, and $f\colon \G \to Z$ a topological embedding. 
    Then there exists $\lambda$ and a continuous map $\hat{f}\colon \G\to Z$, so that $\hat{f}(E)$ is a $\lambda$-quasi arc that $\lambda$-avoids $\V$ for all edges $E\in E(\G)$. Moreover, for all $E,E'\in E(\G)$ with $E\neq E'$:
    \begin{enumerate}[(i)]
        \item  if $E,E'$ do not share any vertices, then $\hat{f}(E)\cap \hat{f}(E')=\emptyset$ 
        \item if $E,E'$ share a vertex, then $\hat{f}(E)\cap \hat{f}(E')\subseteq N_{d/C}(E\cap E')$, where $d$ is half the minimum distance between distinct vertices in $V(\G)$, and $C=2\lambda+1$.
    \end{enumerate}
\end{lemma}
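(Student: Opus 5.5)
We build $\hat f$ vertex by vertex and then edge by edge. Throughout, the key parameter is $\varepsilon>0$, which we fix at the end.

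\emph{Step 1: separation data from the embedding.} Since $\G$ is finite and $f$ is a topological embedding into a compact space, the images of disjoint closed pieces of $\G$ are at positive distance. We choose $\varepsilon$ smaller than the following quantities:
\begin{itemize}
\item one third of the minimal distance between $f(E)$ and $f(E')$ over pairs of edges sharing no vertex;
\item one third of $\rho(f(v),f(v'))$ for distinct vertices $v,v'$;
\item a quantity $\varepsilon_0$, defined below, controlling edges that share a vertex.
\end{itemize}
For $\varepsilon_0$: given a radius $r>0$ and adjacent edges $E,E'$ meeting at $v$, compactness shows that the sets $f(E)\setminus B(f(v),r)$ and $f(E')\setminus B(f(v),r)$ are disjoint compacta. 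Hence they are at some positive distance $\tau(r)$, and we take $\varepsilon<\tau(r)/3$ for a suitable $r$.

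\emph{Step 2: moving vertices and edges.} Apply Lemma \ref{endpoints} at each vertex $v$, with this $\varepsilon$, to obtain a point $v'$ within $\varepsilon$ of $f(v)$ that $\alpha$-avoids $\V$. For each edge, concatenate short linearly-connected arcs as in that lemma to obtain an arc joining the new endpoints inside $N(f(E),\varepsilon)$. Since every vertex receives a single new point $v'$, the edges at $v$ still share their endpoint, so the assignment remains a continuous map on $\G$. Next apply Lemma \ref{quasiarcs} to each such arc. This produces a $\lambda_E$-quasi-arc that $\lambda_E$-avoids $\V$ and $\varepsilon$-follows the arc, so it lies in $N(f(E),2\varepsilon)$. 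Finiteness of $E(\G)$ lets us set $\lambda=\max_E\lambda_E$. We also need $\lambda$ to be independent of $\varepsilon$, which is the delicate point below. Define $\hat f$ on each edge by a parametrisation of this quasi-arc.

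\emph{Step 3: the intersection properties.} For (i), $\hat f(E)\subseteq N(f(E),2\varepsilon)$ and $\hat f(E')\subseteq N(f(E'),2\varepsilon)$, and these neighbourhoods are disjoint because of the first constraint on $\varepsilon$. For (ii), any common point lies within $2\varepsilon$ of both $f(E)$ and $f(E')$. By Step 1 it therefore lies within $r+2\varepsilon$ of $f(v)$, and hence within $r+3\varepsilon$ of $\hat f(v)$.

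\emph{The main obstacle: circularity of constants.} To land in $N_{d/C}(E\cap E')$ with $C=2\lambda+1$, we need $r+3\varepsilon<d/(2\lambda+1)$. Here $d$ is read in the ambient metric at the relocated vertices, or equivalently at the original ones up to $\varepsilon$. But $\lambda$ could depend on $\varepsilon$ through $\alpha$. I would resolve this by noting that Theorem \ref{ms7.2} gives $\lambda=\lambda(N,L,\nu)$, with $\nu$ depending only on the avoidance constant of the endpoints.

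This suggests the following order of choices:
\begin{enumerate}
\item First choose the vertex points $v'$ with a fixed $r_0\le 1/5L$ in Proposition \ref{MS7.3}. Their displacement is then at most $\diam(Z)r_0$, and the avoidance constant $8L^2/r_0$ is fixed. If necessary, first perturb the embedding $f$ so that its vertices already avoid $\V$; the displacement can be made as small as we like at the cost of a worse but fixed avoidance constant.
\item This fixes $\lambda$, and hence $C$ and $d$.
\item Only then choose $r$ and $\varepsilon$ for the edge arcs. Lemma \ref{quasiarcs} allows $\varepsilon$ arbitrary with the same $\lambda$, since $\lambda$ depends only on $\alpha$ and the endpoints are already fixed.
\end{enumerate}

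Finally, take $r<d/(2(2\lambda+1))$ and $\varepsilon$ correspondingly small, which gives (ii).
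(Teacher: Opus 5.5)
Your skeleton (Lemma \ref{endpoints}, then Lemma \ref{quasiarcs}, then $\varepsilon$ below some separation constants) is the paper's. Fixing $r$ before $\varepsilon$ via $\tau(r)$ is cleaner than the paper's $d'_{\min}$, which itself depends on $\varepsilon$. The gap is at the point you flag: your order of choices does not break the circularity, it only relocates it.

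Once $r_0$ is fixed first, so are the vertex displacement $\diam(Z)r_0$ and the connecting arcs from $f(v)$ to $v'$, of diameter up to $L\diam(Z)r_0$. The arcs you feed into Lemma \ref{quasiarcs} therefore lie only in $N(f(E),L\diam(Z)r_0)$, not in $N(f(E),\varepsilon)$. Shrinking $\varepsilon$ afterwards controls nothing below the scale $r_0$. Your Step 3 argument for (ii) then needs $L\diam(Z)r_0+\varepsilon\le\tau(r)/2$. Now $\tau(r)\le 2r$ (take points of $f(E)$ and $f(E')$ at distance exactly $r$ from $f(v)$), and you need $r<d/(2(2\lambda+1))$. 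Together these force $L\diam(Z)r_0<d/(2(2\lambda+1))$, where $\lambda$ is the constant of Theorem \ref{ms7.2} for the avoidance constant $8L^2/r_0$. That is the original problem with $r_0$ in place of $\varepsilon$. It is not an artefact of the estimate: all edges at $v$ are rerouted from $f(v)$ to the same point $v'$, so their images can genuinely meet at distance of order $r_0$ from $v'$. Neither Proposition \ref{MS7.3} nor Theorem \ref{ms7.2} says anything about how $\lambda$ grows as $r_0\to0$.

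Your fallback is to first perturb $f$ to a topological embedding whose vertices already avoid $\V$, then rerun Step 1 on it and never move vertices again. That would decouple the choices, but it is the missing idea rather than a routine step. It requires reattaching up to four edges to a displaced vertex by pairwise disjoint arcs that miss the rest of $f(\G)$. Lemma \ref{endpoints} does not do this, and nothing else in the paper supplies it.

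Even granting such an embedding, your step (3) uses that Lemma \ref{quasiarcs} yields one $\lambda$ for every $\varepsilon$. Otherwise the conditions $r+\varepsilon<d/(2\lambda(\varepsilon)+1)$ and $\varepsilon\le\tau(r)/4$ form the same loop again. The paper's proof of that lemma does not give this independence: it shrinks its parameter $r$ as $\varepsilon\to0$ and outputs a $\frac{4S+3r}{r^2}$-quasi-arc. It also cannot hold for arbitrary arcs. Suppose $J$ has thin spikes $J[a_k,b_k]$ with $\rho(a_k,b_k)=\eta_k\to0$ and $\diam(J[a_k,b_k])/\eta_k\to\infty$. Then any arc that $\varepsilon$-follows $J$ with $\varepsilon\ll\eta_k$ must go around the $k$-th spike, so its quasi-arc constant blows up as $\varepsilon\to0$.

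A smaller slip: the factors $1/3$ for the edge distances and for $\tau(r)$ should be $1/4$. A common point of two $2\varepsilon$-neighbourhoods only yields points of $f(E)$ and $f(E')$ at distance $<4\varepsilon$.

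For calibration, the paper's own proof does not resolve the circularity either. It simply takes $\varepsilon<\min\{d/C,d_{\min}/8,d'_{\min}/8\}$. Yet $C=2\lambda+1$ depends on $\varepsilon$ through the avoidance constant produced by Lemma \ref{endpoints}, and $d'_{\min}$ depends on $\varepsilon$ by definition. You have located the delicate point correctly, but the proposal as written does not get past it.
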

\begin{proof}
Let $\varepsilon>0$ be sufficiently small as determined later in the proof. Let $V,W\in V(\G)$ be the images of vertices under $f$. We denote the image of the edge that has endpoints $V$ and $W$ by $E_{V,W}$. We obtain the required map as follows: 

First, apply Lemma \ref{endpoints} to $E_{V,W}$. Let $E'_{v,w}$ denote the arc connecting the two new endpoints, $v,w$. $E'_{v,w}$ is contained within the $\e$-neighbourhood of $E_{V,W}$. Now, apply Lemma \ref{quasiarcs} to $E'_{v,w}$. Let $e_{v,w}$ denote the resulting $\lambda$-quasi arc which $\e$-follows $E'_{v,w}$ and $\lambda$-avoids $\V$. The arc $e_{v,w}$ is contained in the $2\e$-neighbourhood of the original edge. We define $\hat{f}$ to be the map which takes $E_{V,W}$ to $e_{v,w}$ for each edge of $\G$.

\textit{Continuity}: Since $\hat{f}$ is continuous when restricted to each edge, and $\G$ is locally finite, by the gluing lemma $\hat{f}\colon \G\to Z$ is continuous. 

\textit{Disjoint edges remain disjoint}: Let $d_{\mathrm{min}}$ be the minimum distance in $Z$ between the images of two distinct, disjoint edges of $\G$. Let $\e > d_{\mathrm{min}}/8$. Now, let $E_{V,W}$ and $E_{U,T}$ be images of a pair of disjoint edges of $\G$. By Lemma \ref{endpoints}, the resulting arcs $E'_{v,w}$,$E'_{u,t}$ are contained within the $\varepsilon$-neighbourhood of $E_{V,W},E_{U,T}$, respectively. Since $\e> d_{\mathrm{min}}/8$, the arcs $E'_{v,w}$ and $E'_{u,t}$ remain disjoint. Applying Lemma \ref{quasiarcs}, we obtain two quasi-arcs which $\e$-follow $E'_{v,w}$ and $E'_{u,t}$, respectively. In particular they are within distance $\e$ from them. These arcs remain disjoint. To see this, note that we have picked $\e$ to be less than half the distance between $E'_{v,w}$ and $E'_{u,t}$. 

\textit{Intersections remain within bounded neighbourhoods}: Now assume $E_{V,W}$ and $E_{V,U}$ share a vertex $V$. We follow an approach that is similar to the one for disjoint edges. We instead pick the minimum distance between $E_{V,W}$ and $E_{V,U}$ outside of the $2\e$-neighbourhood around $V$. Let $d'_{\mathrm{min}}$ be the minimum such distance across the images of all pairs of edges of $\G$ that share endpoints. By the above argument, picking $\e<d'_{\mathrm{min}}/8$ guarantees that the quasi-arcs $e_{v,w}$ and $e_{v,u}$ are disjoint outside of the $2\e$-neighbourhood of $V$.

Putting this all together, we can choose $\e < \mathrm{min}\{d/C, d_{\mathrm{min}}/8, d'_{\mathrm{min}}/8\}$, which implies that all conditions above are satisfied.
\end{proof}
We now have a way to modify the image of $\G$ to avoid obstacles, and can proceed to construct a graph contained in the modified version of it (that is, $\hat{f}(\G)$) which contains $\G$ as a minor, hence proving a significant part of the main theorem. An obstruction to this is that it is possible $\hat{f}(\G)$ is not embedded into $Z$, since, within the $2\e$-neighbourhoods around the vertices, the procedure above may have introduced a number of new intersections. Hence, we must first ``clean up" $\hat{f}(\G)$ by minimising intersections around the vertices to find a minor that topologically embeds into $\hat{f}(\G)$ and thus $Z$. it is worth noting that, in the case that the graph is of valence at most 3, the modified graph is in fact homeomorphic to $\G$; we prove this as a remark within the proof of the below proposition. 
\begin{proposition}\label{tren}
    Let $(Z,\rho)$ be a metric space. Let $\G$ be a finite metric simplicial graph of valence at most $4$, $E(\G)$ be its set of edges, and $V(\G)$ its set of vertices. Let $f\colon \G \to Z$ be a continuous function such that $f(E)$ is a $\lambda$-quasi-arc for all edges $E\in E(\G)$. Assume the following holds for all $E,E'\in E(\G)$ with $E\neq E'$:
    \begin{enumerate}[(i)]
        \item \label{assumption1} if $E,E'$ do not share any vertices, then $f(E)\cap f(E')=\emptyset$ 
        \item\label{assumption} if $E,E'$ share a vertex, then $f(E)\cap f(E')\subseteq N_{d/C}(E\cap E')$, where $d$ is half the minimum distance between distinct vertices in $V(\G)$, and $C=2\lambda+1$ 
    \end{enumerate}
    then there exists $\G'\subseteq f(\G)$ such that $\G'$ contains $\G$ as a minor.
\end{proposition}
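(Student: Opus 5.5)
Away from the vertices the edges are already disjoint, so all the work is local: near each vertex $v$ we replace the tangled star of (at most four) edges by a small embedded tree that still connects the four edges. Contracting these small trees then gives back $\G$. I read assumption (ii) as applying to the images: $f(E)\cap f(E')$ lies in the $d/C$-neighbourhood of $f(v)$, where $v$ is the shared vertex.

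\textbf{Step 1 (locality).} For each vertex $v$ let $B_v=N_{d/C}(f(v))$. The balls $B_v$ are pairwise disjoint since $d/C<d$.

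Take an edge $E=[v,w]$ and a point $x\in f(E)$ with $\rho(x,f(v))\le d/C$. The $\lambda$-quasi-arc property gives $\diam f(E)[f(v),x]\le \lambda d/C<d/2$. So the initial subarc of $f(E)$ from $f(v)$ to its last point in $B_v$ stays within distance $d/2$ of $f(v)$. This is the reason for the choice $C=2\lambda+1$.

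Let $a_E^v$ be the last point of $f(E)$ in $\overline{B_v}$ along $f(E)$ travelling from $f(v)$ towards $f(w)$. Define $a_E^w$ symmetrically. The middle pieces $m_E=f(E)[a_E^v,a_E^w]$ meet the balls only at their endpoints. Different middle pieces are pairwise disjoint:
\begin{itemize}
\item for non-adjacent edges this is (i);
\item for adjacent edges, (ii) puts any intersection inside $B_v$, and middle pieces avoid the open balls.
\end{itemize}
One detail needs checking: the middle piece of $E$ must not enter $B_w$ before its last visit to $B_v$. The bound above shows the initial subarc lies in $N_{d/2}(f(v))$, which is disjoint from $B_w$, so this holds.

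\textbf{Step 2 (local trees).} At each vertex $v$ of valence $k\le 4$ consider $X_v=\bigcup_{E\ni v} f(E)[f(v),a_E^v]$. This is a connected union of arcs, hence a Peano continuum, and it lies in $N_{d/2}(f(v))$.

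Inside $X_v$ we build a tree $T_v$ whose leaves are exactly the points $a_E^v$, and which meets each $m_E$ only at $a_E^v$. The construction is iterative:
\begin{itemize}
\item take an arc $\beta_1\subset f(E_1)[f(v),a^v_{E_1}]$;
\item for each further edge $E_j$, follow $f(E_j)$ backwards from $a^v_{E_j}$ until it first hits the tree built so far, and add that subarc.
\end{itemize}
Each added arc meets the existing tree in exactly one point, so the result is a finite tree. Its leaves are the $a_E^v$, and possibly $f(v)$ when $f(v)$ is a leaf; in that case we prune it.

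\textbf{Step 3 (the minor).} Set $\G'=\bigcup_E m_E\cup\bigcup_v T_v\subseteq f(\G)$. Different $T_v$ are disjoint because they lie in disjoint $d/2$-balls. Each $T_v$ meets the middle pieces only at its leaves. Hence $\G'$ is a topologically embedded graph.

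Its vertices are the branch points and leaves of the $T_v$; its edges are the middle pieces together with the segments of the trees. Contracting every edge of each tree $T_v$, which is connected, collapses $T_v$ to a single vertex $\hat v$, and each $m_E$ becomes an edge from $\hat v$ to $\hat w$. This recovers $\G$ exactly, so $\G$ is a minor of $\G'$.

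When the valence is at most 3, each $T_v$ is homeomorphic to a star, i.e. a tripod or an arc, so $\G'$ is homeomorphic to $\G$. With valence 4 a tree on four leaves may have two branch points, which is why only a minor is obtained.

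\textbf{Main obstacle.} The hard part is Step 2 together with the locality bookkeeping in Step 1. The points $a_E^v$ must be chosen so that the middle pieces are genuinely disjoint from all the local trees, and each tree must meet each middle piece only at its single endpoint.

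The key tool is the quasi-arc diameter estimate. It confines every initial subarc, and everything built from it, to the $d/2$-ball. This is exactly where $C=2\lambda+1$ is used.
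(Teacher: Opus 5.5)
Your proof is correct, and its skeleton matches the paper's. You cut each edge near its endpoints and use the $\lambda$-quasi-arc diameter bound (the reason for $C=2\lambda+1$) to confine the initial pieces near $f(v)$, so the middle pieces are disjoint by (i) and (ii). You then untangle the initial pieces at each vertex using extremal intersection points, and contract.

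The difference is in how the local step is carried out. The paper cuts at the last point at distance $2d/C$ from $f(v)$ and treats valences $1,2,3,4$ case by case. It chooses successive last intersection points $p,q,r,s$ and joins consecutive ones by arcs $\tilde e_{a,b}$. You instead grow a tree: for each further edge, you attach the arc from $a^v_{E_j}$ back to its first hit on the tree built so far. This requires $\beta_1$ to be the whole initial piece $f(E_1)[f(v),a^v_{E_1}]$, so that $f(v)$ lies in the tree and the first hit exists.

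Your version buys two things. First, the facts that $T_v$ is an embedded tree and meets the middle pieces only at the points $a^v_E$ hold by construction, not by inspecting configurations. Second, the valence bound is never used, so your argument proves the proposition for every finite simplicial graph.

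The paper's radius $2d/C$ buys slack. Your cut points lie at distance exactly $d/C$ from $f(v)$. So disjointness of distinct middle pieces, and nondegeneracy of the attached arcs, rely on $N_{d/C}$ being the open neighbourhood. If it is read as closed, cut at any radius $r\in(d/C,2d/C]$ instead. Your argument then goes through with the bound $\lambda r<d$ in place of $\lambda d/C<d/2$.
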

\begin{proof}
   We first set some notation. We use $e\cdot e'$ to denote the concatenation of two arcs $e,e'$. We will denote edges of $\Gamma$ by $E_{V,W}$, where $V,W\in V(\G)$ are the vertices that are incident to this edge. 
   We consider the image of $\G$ under $f$: \begin{itemize}
        \item for each vertex $V\in V(\G)$ we denote the point $f(V)$ by $v$.
        \item for each edge $E_{V,W}$ we have a $\lambda$-quasi-arc $f(E_{V,W})$ with endpoints $v=f(V)$ and $w=f(W)$. We let this quasi-arc be denoted by $e_{v,w}$.
    \end{itemize} 
    Choose points $x_{v,w}$ on the arcs $e_{v,w}$ as follows: \\ Let $x_{v,w}$ be the last point along the arc $e_{v,w}$, starting at $v$, that realises the distance $\rho(v,x_{v,w})=2d/C$.
    \\\\
    \textbf{\textit{Subarcs avoid neighbourhoods around the vertices.}} Consider the subarc of $e_{v,w}$ with endpoints $x_{v,w},x_{w,v}$, denoted $\hat{e}_{v,w}=e_{v,w}[x_{v,w},x_{w,v}]$. We claim that $\hat{e}_{v,w}\cap N_{d/C}(v\cup w)=\emptyset$. For a contradiction, assume that $\hat{e}_{v,w}\cap N_{d/C}(v\cup w)\neq \emptyset$, as shown in Figure \ref{fig:diameter}, and assume that it intersects the $d/C-$neighbourhood around $w$ (if it only intersects the neighbourhood around $v$, an analogous argument holds). 
    \begin{figure}
    \centering
\begin{tikzpicture}
    \filldraw[myorange] (0,0) circle (17pt);
    \filldraw[myorange] (5,0) circle (17pt);
    \node[myorange] at (-0.6,0.8) {$N_{d/C}(v)$};
    \node[myorange] at (5.6,0.8) {$N_{d/C}(w)$};
    \filldraw[black] (0,0) circle (1pt);
    \node[black] at (0,0.2) {$v$};
    \filldraw[black] (5,0) circle (1pt);
    \node[black] at (5,0.2) {$w$};
    \filldraw[black] (1,-0.63) circle (1pt);
    \node[black] at (1,-0.4) {$x_{v,w}$};
    \filldraw[black] (3.93,0.2) circle (1pt);
    \node[black] at (3.8,0.45) {$x_{w,v}$};
    \draw (0,0) .. controls (1.6,1) .. (4.5,0);
    \draw (4.5,0) .. controls (5,-0.2) .. (0.4,-0.2);
    \draw (0.4,-0.2) .. controls (0,-0.2) .. (0.4,-0.4);
    \draw (0.4,-0.4) .. controls (2.5,-1.2) .. (4.6,-0.4);
    \draw (4.6,-0.4) .. controls (5,-0.2) .. (5,0);
\end{tikzpicture}
\caption{A case where the subarc with endpoints $x_{v,w}$ and $x_{w,v}$ intersects $N_{d/C}{(v\cup w)}$.}
\label{fig:diameter}
\end{figure}
    By definition of the point $x_{w,v}$, the subarc $\hat{e}_{v,w}$ is disjoint from $N_{d/C}(w)$. This implies that $x_{v,w}\in e_{v,w}[x_{w,v},w]$, so, in particular $e_{v,w}[v,x_{v,w}]\cap N_{d/C}(w)\neq \emptyset$. Thus by definition of $d$ and $C$,\begin{equation}\label{subarcs}
        \mathrm{diam}(e_{v,w}[v,x_{v,w}])>\rho(v,w)-d/C\geq d-d/C=\frac{(C-1)}{C}d = \frac{2\lambda d}{2\lambda+1}. 
    \end{equation} 
    However, by our assumption, $e_{v,w}$ is a $\lambda$-quasi arc, therefore $$
        \mathrm{diam}(e_{v,w}[v,x_{v,w}])\leq \lambda \rho(v,x_{v,w}) = 2\lambda d/C = \frac{2\lambda d}{2\lambda+1}$$
    which contradicts (\ref{subarcs}). Hence the subarc of $e_{v,w}$ with endpoints $x_{v,w},x_{w,v}$ is disjoint from $N_{d/C}(v\cup w)$. 
\\\\
\textit{\textbf{Minimising intersections around the vertices.}} Now, when constructing $\G'\subseteq f(\G)$, we need to ensure pairs of edges of $\G'$ intersect only at their vertices. \\\\
\textit{Vertices of valence 3.} First, we consider the case that the vertex $v$ has valence 3, so $v$ is shared by 3 edges. Hence, under $f$, there are three $\lambda$-quasi arcs with endpoint $v$, say $e_{v,w},e_{v,u},e_{v,t}$. By Assumption (\ref{assumption}), any pairwise intersections between $e_{v,w}, e_{v,u}, e_{v,t}$ are contained within $N_{d/C}(v)$. Let $p\in e_{v,w}[v,x_{v,w}]$ be the last point along $e_{v,w}$, starting at $v$, such that $p$ intersects $ e_{v,u}[v,x_{v,u}]\cup e_{v,t}[v,x_{v,t}]$. Without loss of generality, assume $p\in e_{v,u}[v,x_{v,u}]$. Now, let $q\in e_{v,t}[v,x_{v,t}]$ be the last point along $e_{v,t}[v,x_{v,t}]$, starting at $v$, which intersects $e_{v,u}[v,x_{v,u}]$. We have the following cases: 
    \begin{enumerate}
        \item $q\in e_{v,u}[v,p]$: we let $e_{v,w}'=e_{v,w}[p,x_{v,w}]$, $e_{v,u}'=e_{v,u}[p,x_{v,u}]$, and $e'_{v,t}=e_{v,t}[q,x_{v,t}]$. We denote the subarc of $e_{v,u}$ with endpoints $q$ and $p$ by $\tilde{e}_{q,p}$. This case is illustrated in Figure (\ref{fig:case2}). \label{case1}
        \item $q\in e_{v,u}[p,x_{v,u}]$: in this case, we let $e'_{v,w}=e_{v,w}[p,x_{v,w}]$, $e_{v,u}'=e_{v,u}[q,x_{v,u}]$, $e_{v,t}'=e_{v,t}[q,x_{v,t}]$, and let $\tilde{e}_{p,q}$ denote the subarc of $e_{v,u}$ with endpoints $p$ and $q$. This case is illustrated in Figure (\ref{fig:case3}). \label{case2}
    \end{enumerate}
    \begin{figure}
    \centering
\begin{tikzpicture}
    \draw[mygreen, very thick] (2,0) .. controls (3,1) .. (4,1);
    \draw[myblue, very thick] (2,0) -- (4,0);
    \filldraw[black] (4,1) circle (1pt);
    \node[black] at (4,1.2) {$x_{v,w}$};
    \filldraw[black] (4,0) circle (1pt);
    \node[black] at (4,0.2) {$x_{v,u}$};
    \filldraw[black] (0,0) circle (1pt);
    \node[black] at (-0.3,0) {$v$};
    \draw (0,0) .. controls (0.5,0.4) .. (1,0);
    \draw (1,0) .. controls (1.5,-0.5) .. (2,0);
    \draw (0,0) -- (0.5,0);
    \draw (0,0) .. controls (0.15,-0.4) .. (0.3,0);
    \draw (0.3,0) .. controls (0.4,0.2) .. (0.5,0);
    \draw[myorange, very thick] (2,0) -- (0.5,0);
    \draw[mypurple, very thick] (0.5,0) .. controls (1,-1) .. (4,-1);
    \filldraw[black] (2,0) circle (1pt);
    \node[black] at (2,0.2) {$p$};
    \filldraw[black] (0.5,0) circle (1pt);
    \node[black] at (0.4,-0.2) {$q$};
    \filldraw[black] (4,-1) circle (1pt);
    \node[black] at (4,-0.8) {$x_{v,t}$};
    \node[mygreen] at (2.3,0.8) {$e_{v,w}'$};
    \node[myblue] at (3,-0.3) {$e_{v,u}'$};
    \node[mypurple] at (2.3,-0.7) {$e_{v,t}'$};
    \node[myorange] at (1.25,0.25) {$\tilde{e}_{q,p}$};

\end{tikzpicture}
\caption{A possibility for case 1: the new vertices are $p$ and $q$.}
\label{fig:case2}
    \end{figure}
\begin{figure}
    \centering
\begin{tikzpicture}
    \draw[mygreen, very thick] (1.5,0) .. controls (2,1) .. (4,1);
    \draw[myblue, very thick] (3,0) -- (4,0);
    \filldraw[black] (4,1) circle (1pt);
    \node[black] at (4,1.2) {$x_{v,w}$};
    \filldraw[black] (4,0) circle (1pt);
    \node[black] at (4,0.2) {$x_{v,u}$};
    \filldraw[black] (0,0) circle (1pt);
    \node[black] at (-0.3,0) {$v$};
    \draw (0,0) .. controls (0.4,0.4) .. (0.75,0);
    \draw (0.75,0) .. controls (1.15,-0.5) .. (1.5,0);
    \draw (0,0) -- (1.5,0);
    \draw (0,0) .. controls (1,-1) .. (2,0);
    \draw (2,0) .. controls (2.5,0.5) .. (3,0);
    \draw[myorange, very thick] (1.5,0) -- (3,0);
    \node[myorange] at (2.25,-0.25) {$\tilde{e}_{p,q}$};
    \draw[mypurple, very thick] (3,0) .. controls (3.7,-1) .. (4,-1);
    \filldraw[black] (1.5,0) circle (1pt);
    \node[black] at (1.5,0.2) {$p$};
    \filldraw[black] (3,0) circle (1pt);
    \node[black] at (2.9,-0.2) {$q$};
    \filldraw[black] (4,-1) circle (1pt);
    \node[black] at (4,-0.8) {$x_{v,t}$};
    \node[mygreen] at (3,1.3) {$e_{v,w}'$};
    \node[myblue] at (3.3,0.3) {$e_{v,u}'$};
    \node[mypurple] at (3.2,-0.7) {$e_{v,t}'$};

\end{tikzpicture}
\caption{A possibility for case 2: the new vertices are $p$ and $q$.}
\label{fig:case3}
    \end{figure}
We claim that each of the four arcs constructed in both cases introduce no new intersections, and that these new arcs only share endpoints. We prove the first case, the second follows using similar arguments. Firstly, all four arcs are indeed arcs since they are subarcs of $e_{v,w},e_{v,u}$ and $e_{v,t}$. By definition of $p$ as the last point of intersection, $e'_{v,w}$ intersects the other arcs only at the endpoint $p$. Note that, in Case (\ref{case1}), $p$ is also the last point of intersection along $e_{v,u}$, hence the same holds for $e'_{v,u}$. An analogous argument holds for the point $q$ and $e'_{v,t}$ being disjoint from other arcs apart from at $q$. Finally, the subarc $\tilde{e}_{q,p}$ does not intersect any other arcs apart from at the endpoints, since by construction $e'_{v,w}, e'_{v,u}$ and $e'_{v,t}$ do not intersect $e_{v,u}[v,p]$ at any points other than $p$ and $q$. Moreover, none of the four arcs intersect $e_{v,w}[x_{v,w},w],e_{v,u}[x_{v,u},u]$ or $e_{v,t}[x_{v,t},t]$ (apart from at the shared endpoints $x_{v,w}$, $x_{v,u}$, $x_{v,t}$), because of assumptions (\ref{assumption1}), (\ref{assumption}).
\\\\
\textit{Vertices of valence 2.} In the case that the vertex $v$ has valence 2, an analogous argument holds: letting $e_{v,w},e_{w,v}$ be the images of edges adjacent to $v$, we pick the point $p\in e_{v,w}[v, x_{v,w}]$ be the last point along $e_{v,w}$, starting at $v$, that intersects $e_{w,v}$. By definition of $p$ there are no points of intersections on the subarcs $e'_{v,w}=e_{v,w}[p,x_{v,w}]$, $e_{v,u}'=e_{v,u}[p,x_{v,u}]$, other than $p$. 
\\\\
\textit{Vertices of valence 1.} Letting $e_{v,w}$ be the image of the edge adjacent to $v$, $e_{v,w}$ cannot self-intersect as, by assumption, it is a quasi-arc. Furthermore, $e_{v,w}[v,x_{v,w}]$ cannot intersect any other edges by Assumption (\ref{assumption1}). Hence we let $e'_{v,w}=e_{v,w}[v,x_{v,w}]$.
\\\\
\textit{Vertices of valence 4.} Now, let $v$ be a vertex of valence 4. We have left this case for last since we need to adjust the construction used for valence 3 to account for the extra edge. We do so as follows: let $e_{v,w},e_{v,u},e_{v,t}$ and $e_{v,y}$ be the arcs sharing the vertex $v$. We pick the point $p$ as above; let $p\in e_{v,w}[v,x_{v,w}]$ be the last point on this arc that intersects $e_{v,u}[v,x_{v,u}]\cup e_{v,t}[v,x_{v,t}]\cup e_{v,y}[v,x_{v,y}]$, assuming without loss of generality that $p\in e_{v,u}[v,x_{v,u}]$. Now, pick $q\in e_{v,u}[v,x_{v,u}]$ to be the last point of intersection between $e_{v,u}[v,x_{u,t}]$ and $e_{v,w}[p,x_{v,w}]\cup e_{v,t}[v,x_{v,t}]\cup e_{v,y}[v,x_{v,y}]$. Let $r\in e_{v,t}[v,x_{v,t}]$ be the last point of intersection with $e_{v,w}[p,x_{v,w}]\cup e_{v,u}[q,x_{v,u}]\cup e_{v,y}[v,x_{v,y}]$. Finally, pick $s\in e_{v,y}[v,x_{v,y}]$ to be the last point intersecting any of the other three arcs. 

We have the following two cases: 
\begin{enumerate}
    \item First, if $s\in e_{v,w}[p,x_{v,w}]\cup e_{v,u}[q,x_{v,u}]\cup e_{v,s}[r,x_{v,t}]$ then by construction it must be equal to one (or more) of the points $p,q,r$. An example of this is illustrated in Figure \ref{fig:valence42}.
    \item Otherwise, none of $e_{v,w}[p,x_{v,w}]$, $e_{v,u}[q,x_{v,u}]$, $e_{v,t}[r,x_{v,t}]$ intersect the arc $e_{v,y}[v,x_{v,y}]$, hence by construction at least two out of the points in $\{p,q,r\}$ must be equal. An example of this is illustrated in Figure \ref{fig:valence41}.
\end{enumerate}  In both cases, we have three points of intersection between the four arcs, included in the set $\{p,q,r,s\}$. Let $e'_{v,w}= e_{v,w}[p,x_{v,w}]$, $e'_{v,u}=e_{v,u}[q,x_{v,u}]$, $e'_{v,t}=e_{v,t}[r,x_{v,t}]$, $e'_{v,y}=e_{v,y}[s,x_{v,y}]$. In addition, whenever two of the points in the set $\{p,q,r,s\}$ are consecutive points on the same arc $e_{v,i}[v,x_{v,i}]$ for some $i\in \{w,u,t,y\}$, let the subarc of $e_{v,i}[v,x_{v,i}]$ which connects them be denoted by $\tilde{e}_{a,b}$, where $a\neq b$ are the points it connects, noting that these subarcs do not intersect any of the other arcs, apart from at the endpoints $a$ and $b$, by our construction. 
\begin{figure}
    \centering
     \begin{tikzpicture}
        \draw[very thick, myblue] (4,0) -- (5,0);
        \draw (0,0) -- (4,0);
        \draw (0,0) .. controls (1,-1) .. (2,0);
        \draw[very thick, mygreen] (2,0) .. controls (3,1) .. (5,1);
        \draw (0,0) .. controls (1.25,-0.5) .. (2.5,0);
        \draw (2.5,0) .. controls (3.75,0.7) .. (4,0);
        \draw[very thick, myorange] (4.25,-1) .. controls (4.5,-2) .. (5,-2);
        \draw (4,0) -- (4.25,-1); 
        \draw[very thick, mypurple] (4.25,-1) .. controls (4.25,-1) .. (5,-1);
        \draw (0,0) .. controls (1,-1) .. (4.25,-1);
        \filldraw[black] (2,0) circle (1pt);
        \filldraw[black] (4,0) circle (1pt);
        \filldraw[black] (4.25,-1) circle (1pt);
        \filldraw[black] (5,0) circle (1pt);
        \filldraw[black] (5,1) circle (1pt);
        \filldraw[black] (5,-1) circle (1pt);
        \filldraw[black] (5,-2) circle (1pt);
        \filldraw[black] (0,0) circle (1pt);
        \node[black] at (0,0.25) {$v$};
        \node[black] at (2,0.25) {$p$};
        \node[black] at (3.9,-0.25) {$q$};
        \node[black] at (3.8,-1.25) {$r=s$};
        \node[black] at (5,1.25) {$x_{v,w}$};
        \node[black] at (5,0.25) {$x_{v,u}$};
        \node[black] at (5,-0.75) {$x_{v,t}$};
        \node[black] at (5,-1.75) {$x_{v,y}$};
    \end{tikzpicture}
    \qquad
    \begin{tikzpicture}
        \draw[very thick, myblue] (4,0) -- (5,0);
        \draw (2,0) -- (4,0);
        \draw[very thick, mygreen] (2,0) .. controls (3,1) .. (5,1);
        \draw[very thick, myorange] (4.25,-1) .. controls (4.5,-2) .. (5,-2);
        \draw (4,0) -- (4.25,-1); 
        \draw[very thick, mypurple] (4.25,-1) .. controls (4.25,-1) .. (5,-1);
        \filldraw[black] (2,0) circle (1pt);
        \filldraw[black] (4,0) circle (1pt);
        \filldraw[black] (4.25,-1) circle (1pt);
        \filldraw[black] (5,0) circle (1pt);
        \filldraw[black] (5,1) circle (1pt);
        \filldraw[black] (5,-1) circle (1pt);
        \filldraw[black] (5,-2) circle (1pt);
        \node[black] at (2,0.25) {$p$};
        \node[black] at (3.9,-0.25) {$q$};
        \node[black] at (3.75,-1) {$r=s$};
        \node[black] at (5.5,1) {$x_{v,w}$};
        \node[black] at (5.5,0) {$x_{v,u}$};
        \node[black] at (5.5,-1) {$x_{v,t}$};
        \node[black] at (5.5,-2) {$x_{v,y}$};
        \node[mygreen] at (3.5,1.25) {$e'_{v,w}$};
        \node[myblue] at (4.5,0.25) {$e'_{v,u}$};
        \node[mypurple] at (4.6,-0.75) {$e'_{v,t}$};
        \node[myorange] at (4,-1.55) {$e'_{v,y}$};
        \node[black] at (3,0.25) {$\tilde{e}_{p,q}$};
        \draw[-stealth] (3,-0.6) -- (4,-0.6);
        \node[black] at (2.7,-0.55) {$\tilde{e}_{q,s}$};
    \end{tikzpicture}
    \caption{Cleaning up around a vertex of valence 4: a possibility for case 1.}
    \label{fig:valence42}
\end{figure}
\begin{figure}
    \centering
    \begin{tikzpicture}
        \draw[very thick, myblue] (4,0) -- (5,0);
        \draw (0,0) -- (4,0);
        \draw[very thick, mygreen] (2,0) .. controls (3,1) .. (5,1);
        \draw[very thick, mypurple] (4,0) .. controls (4.5,-1) .. (5,-1);
        \draw[very thick, myorange] (0.5,0) .. controls (2,-2) .. (5,-2);
        \draw (0.5,0) .. controls (0.1,0.5) .. (0,0);
        \draw (4,0) .. controls (3.5,0.8) .. (3,0);
        \draw (3,0) .. controls (2.25,-1.1) .. (1.5,0);
        \draw (1.5,0) .. controls (0.75,1.1) .. (0,0);
        \draw (2,0) .. controls (1.5,-0.5) .. (1,0);
        \draw (1,0) .. controls (0.5,0.5) .. (0,0);
        \filldraw[black] (2,0) circle (1pt);
        \filldraw[black] (4,0) circle (1pt);
        \filldraw[black] (5,0) circle (1pt);
        \filldraw[black] (5,1) circle (1pt);
        \filldraw[black] (5,-1) circle (1pt);
        \filldraw[black] (5,-2) circle (1pt);
        \filldraw[black] (0,0) circle (1pt);
        \filldraw[black] (0.5,0) circle (1pt);
        \node[black] at (-0.25,0) {$v$};
        \node[black] at (0.4,-0.2) {$s$};
        \node[black] at (2,0.25) {$p$};
        \node[black] at (3.6,-0.25) {$q=r$};
        \node[black] at (5,1.25) {$x_{v,w}$};
        \node[black] at (5,0.25) {$x_{v,u}$};
        \node[black] at (5,-0.75) {$x_{v,t}$};
        \node[black] at (5,-1.75) {$x_{v,y}$};
\end{tikzpicture}
\qquad
\begin{tikzpicture}
        \draw[very thick, myblue] (4,0) -- (5,0);
        \draw (0.5,0) -- (4,0);
        \draw[very thick, mygreen] (2,0) .. controls (3,1) .. (5,1);
        \draw[very thick, mypurple] (4,0) .. controls (4.5,-1) .. (5,-1);
        \draw[very thick, myorange] (0.5,0) .. controls (2,-2) .. (5,-2);
        \filldraw[black] (2,0) circle (1pt);
        \filldraw[black] (4,0) circle (1pt);
        \filldraw[black] (5,0) circle (1pt);
        \filldraw[black] (5,1) circle (1pt);
        \filldraw[black] (5,-1) circle (1pt);
        \filldraw[black] (5,-2) circle (1pt);
        \filldraw[black] (0.5,0) circle (1pt);
        \node[black] at (0.4,-0.2) {$s$};
        \node[black] at (2,0.25) {$p$};
        \node[black] at (3.6,-0.25) {$q=r$};
        \node[black] at (5.5,1) {$x_{v,w}$};
        \node[black] at (5.5,0) {$x_{v,u}$};
        \node[black] at (5.5,-1) {$x_{v,t}$};
        \node[black] at (5.5,-2) {$x_{v,y}$};
        \node[mygreen] at (3.5,1.25) {$e'_{v,w}$};
        \node[myblue] at (4.5,0.25) {$e'_{v,u}$};
        \node[mypurple] at (4.1,-0.75) {$e'_{v,t}$};
        \node[myorange] at (2,-1.2) {$e'_{v,y}$};
        \node[black] at (1.25,0.25) {$\tilde{e}_{s,p}$};
        \node[black] at (3,0.25) {$\tilde{e}_{p,q}$};
       \end{tikzpicture}
    \caption{Cleaning up around a vertex of valence 4: a possibility for case 2.}
    \label{fig:valence41}
\end{figure}
\\\\
\textbf{\textit{Creating the new graph $\G'$.}} The final step will be to construct the edges and vertices of a graph $\G'$, which contains $\G$ as a minor and topologically embeds in $f(\G)$. For each arc $e_{v,w}\in f(E(\G))$ with endpoints $v,w\in f(V(\G))$, we take the following concatenation of subarcs: \begin{equation}\label{finaledges}
    e''_{v,w}=e_{v,w}'\cdot \hat{e}_{v,w}\cdot e'_{w,v}
    \end{equation}
    In the first part of the proof, we showed that $\hat{e}_{v,w}=e_{v,w}[x_{v,w},x_{w,v}]$ is indeed an arc, so the whole concatenation is an arc: notice that $e'_{v,w}$ and $e'_{w,v}$ are entirely disjoint from each other, and by the above argument they only intersect $\hat{e}_{v,w}$ once at their shared endpoint $x_{v,w}$ and $x_{w,v}$, respectively. Hence each of $e'_{v,w}, \hat{e}_{v,w}$ and $e'_{w,v}$ is an arc, and the only pairwise intersections between them are at the endpoints $x_{v,w},x_{w,v}$, so $e''_{v,w}$ is itself an arc.  
   Hence, we can embed the desired graph $\G'$ into $f(\G)$ by constructing its edges as follows:\begin{itemize}
       \item $e''_{v,w}$ as constructed above,
       \item any arc of the form $\tilde{e}_{a,b}$ included in the image of an edge of $\G$ adjacent to a vertex of valence 3 or 4, as constructed above.
   \end{itemize}
    To construct the set of vertices of $\G'$, for each vertex $v$ of $\G$, take the set $\{p,q,r,s\}$ defined above to be the set of vertices of $\G'$, noting that, whenever some points in $\{p,q,r,s\}$ are the same, they define the same vertex in $\G'$. In addition, note that, in this graph, no two edges intersect pairwise apart from at their shared endpoints. Indeed, by construction, the edges $e''_{v,w}$ consist of subarcs of the form $\hat{e}_{v,w}=e_{v,w}[x_{v,w},x_{w,v}]$, which are disjoint from any other arcs since they do not intersect the $d/C$-neighbourhood around the vertices, and subarcs of the form $e'_{v,w}$, which are also disjoint from other arcs by definition, as we chose their endpoint to be the last point along an edge that intersects any other arc. Finally, edges of the form $\tilde{e}_{a,b}$ are disjoint from other arcs since their endpoints are two consecutive points of intersection with other arcs. It follows that the arcs that define edges of $\G'$ only intersect at shared endpoints, and $\G'$ indeed defines an embedding of a graph in $Z$.
\\\\
\textbf{\textit{Minors of $\G'$.}} It remains to show that $\G$ is a minor of the graph $\G'$. It suffices to show that we can obtain $\G$ from $\G'$ by contracting edges; that is, identifying an edge of $\G'$ and its endpoints with a single point. Therefore, we consider the minor of $\G'$ obtained by contracting all edges of the form $\tilde{e}_{a,b}$, and denote it by $\G''$. Contracting such an edge identifies the points $a,b\in \{p,q,r,s\}$ to a single vertex. For each image of a vertex $v$ of $\G$, all endpoints in the set $\{p,q,r,s\}$ are identified to a single vertex, say $v'$, shared by exactly as many edges as $v$ was. Every edge of $\G''$ is in bijection with an edge of $\G$. To see this, take two vertices $v,w\in \G''$. We will show that these vertices are connected by an edge if and only if they were connected by an edge in $\G$. Indeed, if $v,w$ are distinct vertices connected by an edge, it will be of the form $e''_{v,w}$, hence the corresponding vertices $V,W\in V(\G)$ are connected by an edge $E_{V,W}$ of $\G$. Similarly, if $V,W$ are connected by an edge in $\G$, the corresponding vertices $v,w$ will be connected by an edge of the form $e''_{v,w}$ in $\G''$.
\end{proof}
\begin{remark}
    In the case that $\G$ has valence at most 3, the argument above can be adjusted slightly to show that we can get a resulting graph $\G'$ topologically embedded in $f(\G)$ that is isomorphic to $\G$. To see this, returning to Figures (\ref{fig:case2}) and (\ref{fig:case3}), notice that within each figure there exists a vertex of valence 3 with adjacent edges which do not intersect each other. More precisely, focusing on Figure (\ref{fig:case2}), let $\bar{e}_{v,t}$ be the concatenation of $\tilde{e}_{q,p}$ and $e'_{v,t}$. The endpoint $p$ is then the common vertex shared between the three edges $e'_{v,w},e'_{v,u}.\bar{e}_{v,t}$. The same argument can be made for the case in Figure (\ref{fig:case3}), we view the concatenation of $\tilde{e}_{p,q}$ and $e'_{v,w}$ as a single edge, denoted $\bar{e}_{v,w}$, letting $q$ be the shared vertex. The cases where a vertex is of valence less than 3 are straightforward. 

   We define the graph $\G'$ as before, but with the addition that we view edges of the form $\tilde{e}_{a,b}$ and $e'_{v,i}$, for some $a,b\in \{p,q,r,s\}$, $v,i\in f(V(\G))$, as a single edge $\bar{e}_{v,i}$. The resulting graph has the same number of vertices as $\G$, and, each edge adjacent to a vertex, say $E_{V,W}$, gives rise to an edge of the form either $e''_{v,w}$ or $\bar{e}_{v,w}$. Hence $\G'$ has the same number of edges and vertices, and the same valence as $\G$, and the two graphs are isomorphic. 

    Therefore, reducing the statement of the Proposition \ref{tren} to the case where $\G$ has valence at most 3, we get the stronger result that (a graph isomorphic to) $\G$ is contained within $f(\G)$.
\end{remark}
\section{Proving the main theorem}\label{sec:proofs}
\subsection{Main theorem}
The aim of this subsection is to discuss and prove the main theorem of the paper (Theorem \ref{maintheorem}). Before moving onto the proof of the theorem, we state a more general version of it which also holds, in which we allow the group $G$ to have torsion:
\begin{theorem}[Theorem \ref{maintheorem}]\label{maintheoremtorsion}
    Let $G$ be a one-ended hyperbolic group such that $G$ does not split over any elementary subgroup. Let $\G$ be a finite simplicial graph of valence at most 4, such that $\G$ embeds into $\bdry G$. Then for all infinite order elements $g\in G$, there exists $k>0$ such that for all multiples $n$ of $k$, we have that $\gbar=G\big/ \langle\langle g^n\rangle\rangle$ is hyperbolic and there exists a graph $\hat{\G}$ that topologically embeds into $\bdry \gbar$ and contains $\G$ as a minor.

    If, in addition, $G$ is torsion-free, the above holds for all large enough $n$.
\end{theorem}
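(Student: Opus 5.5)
The plan is to assemble the results of Sections \ref{sec:2}--\ref{sec:6}. Fix an infinite order $g\in G$ and let $\V$ be the collection of limit sets $\{hg^{\pm\infty}\}$ of left cosets of $E(g)$ (Definition \ref{obstacles}).

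First I will verify Assumption \ref{assumptiongeneral} for $(\bdry G,\rho_G)$ and $\V$.
\begin{enumerate}[(a)]
\item Since $G$ is one-ended, $\bdry G$ is compact, doubling and locally connected. Since it has no cut points, it is linearly connected with some constant $L$ (the standard Bonk--Kleiner/Bowditch fact); I enlarge $L\geq 10$ as needed.
\item Since $G$ has no elementary splitting, Theorem \ref{gm7.9} gives that $\bdry G$ has no local or global cut points. This requires $G$ not virtually Fuchsian; in the virtually Fuchsian case $\bdry G=\mathbb{S}^1$ and only cycles/paths embed, which I will handle separately or simply note is excluded where needed.
\item Corollary \ref{Egconditions} (through Proposition \ref{lem:mssep}) then gives that $\V$ is $L$-separated and that each $V\in\V$ is $L$-porous and $L$-avoidable below $D(V)=e^{-\e d_V}$.
\end{enumerate}

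Next I take an embedding $f\colon\G\to\bdry G$ and apply Lemma \ref{lem:creatine}. This yields $\lambda$ and a continuous $\hat f\colon\G\to\bdry G$ in which each edge is a $\lambda$-quasi-arc that $\lambda$-avoids $\V$, and which satisfies conditions (i) and (ii). Proposition \ref{tren} then produces a graph $\G'\subseteq\hat f(\G)$ that is topologically embedded and contains $\G$ as a minor.

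Every point of $\hat f(\G)$ lies on some edge image, so it $\lambda$-avoids $\V$; hence $\G'\subseteq\bdry_\lambda G$. The vertices of $\G$ also avoid $\V$, since they are endpoints of the edge arcs.

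For this $\lambda$, Lemma \ref{transversal} gives a transversality function $\eta$. Proposition \ref{qgeodesictransversal} then gives $K$ and $N_0$. Corollary \ref{nosplitbdry} gives $k_1$ such that for multiples of $k_1$ the quotient $\gbar$ has no elementary splitting; I only need hyperbolicity of $\gbar$ here, which follows from Dehn filling results \cite{delzant,olshanskii,gromov1987hyperbolic} for long fillings.

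I set $k$ to be a common multiple of $N_0$, $k_1$ and the normality constant of Remark \ref{dehnsetup}, further replaced by a multiple avoiding the finite bad set of Definition \ref{def:dehn}. In the torsion-free case, Remark \ref{dehnsetuptf} removes the normality constraint. I expect that $N_0$ can also be dropped there, by inspecting the proof of \cite[Proposition 2.12]{mackaysisto}, since its hypotheses are "sufficiently long fillings". This is what gives "all large enough $n$".

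Finally, for such $n$, Proposition \ref{prop:bdrymap} gives a continuous injection $\varphi\colon\bdry_\lambda G\to\bdry\gbar$. Restricted to the compact set $\G'$, $\varphi$ is a continuous injection into a Hausdorff space, hence an embedding. So $\hat\G:=\varphi(\G')$ is a topologically embedded graph in $\bdry\gbar$ containing $\G$ as a minor.

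The main obstacle I anticipate is uniformity of the constants. The function $\eta$, and hence $K$ and $N_0$, must depend only on $\lambda$, and $\lambda$ must depend only on $\G$, $f$ and $\V$, not on $n$. This holds because the whole construction in $\bdry G$ precedes the choice of $n$; I will make that order of quantifiers explicit.

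A secondary point is the torsion-free "all large $n$" claim, which requires Proposition \ref{qgeodesictransversal} to hold for all sufficiently large $n$ rather than just for multiples of $N_0$. I would try to justify this by viewing $\langle g^n\rangle$ as a long filling of the cyclic group $E(g)$.
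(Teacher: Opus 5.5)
Your proposal is correct and follows the paper's proof essentially step for step: Corollary~\ref{Egconditions} to verify Assumption~\ref{assumptiongeneral}, then Lemma~\ref{lem:creatine} and Proposition~\ref{tren} inside $\bdry G$, then Lemma~\ref{transversal}, Proposition~\ref{qgeodesictransversal} and Proposition~\ref{prop:bdrymap} to carry $\G'\subseteq\bdry_\la G$ into $\bdry\gbar$, and your explicit treatment of the order of quantifiers, of the constant $N_0$, and of the compact-to-Hausdorff embedding is more careful than the paper's. The two loose ends you flag are not addressed in the paper's proof either; however, you cannot simply declare the virtually Fuchsian case excluded when $G$ has torsion (hyperbolic triangle groups are one-ended, have property FA, and have circle boundary), so it needs a separate argument: there $\G$ is a cycle or a disjoint union of paths, and it suffices to find a circle in $\bdry\gbar$, which exists once $\gbar$ is one-ended for long fillings (using Corollary~\ref{nosplitbdry}), since $\bdry\gbar$ is then a Peano continuum without cut points.
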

\begin{proof} Let $G$ be a hyperbolic, one-ended group, such that $G$ does not split over the elementary subgroup $E(g)$ for any $g\in G$. By Remark \ref{dehnsetup}, there exists some $k$ such that $\langle g^k \rangle \unlhd E(g)$. Therefore, fixing $n$ to be any multiple of $k$, set $\gbar =G\big/ \langle\langle g^n\rangle\rangle$. Then, $\gbar$ can be realised as a Dehn filling of $G$.

We remark that, if, in addition, $G$ is torsion-free, by Remark \ref{dehnsetuptf}, $\gbar =G\big/ \langle\langle g^n\rangle\rangle$ is a Dehn filling of $G$ for all large enough $n$. Hence, the remainder of this proof holds for both the torsion as well as the torsion-free case for $n$ large enough.

We have that $\gbar$ is itself hyperbolic. Indeed, this is a result of \cite{delzant}, \cite{olshanskii} and \cite{gromov1987hyperbolic}. 

Now, let $(Z,\rho)$ denote $\bdry G$ with the usual visual metric on the boundary, described in Section \ref{bdrymapsection}. Assume $Z$ contains a graph of valence at most 4, and let $\G$ denote its image as a subset of $Z$. Moreover, let $\V$ denote the collection of the limit sets of left cosets of $E(g)$ in $Z$. We want to show that $\bdry\gbar$ contains a graph $\G'$ with $\G$ as a minor. We approach this by showing that there exists such a graph in $Z$ that avoids the points in $\V$. This suffices; by Lemma \ref{transversal} and Proposition \ref{qgeodesictransversal}, given a constant $\la$ such that a set of points in $Z$ $\la$-avoids $\V$, we can represent those points by geodesic rays in $G$ whose images under the quotient map are quasi-geodesic rays in $\gbar$. The corresponding map from $Z$ to $\bdry \gbar$ will then allow a graph that $\la$-avoids $\V$ to be mapped to a graph in $\bdry \gbar$ by Proposition \ref{prop:bdrymap}.

It was shown in Corollary \ref{Egconditions} that the collection of obstacles $\V$ on the boundary is $L$-separated, porous, and avoidable, where $L$ is the linear connectedness constant for $Z$. Therefore, applying Lemma \ref{lem:creatine} to $\G$, we get that there exists $\lambda$ and a continuous map $\hat{f}\colon \G\to Z$, so that $\hat{f}(E)$ is a $\lambda$-quasi arc that $\lambda$-avoids $\V$ for all edges $E\in E(\G)$. Moreover, the map $\hat{f}$ satisfies Conditions (\ref{assumption1}) and (\ref{assumption}) of Lemma \ref{lem:creatine}. Therefore, we can apply Proposition \ref{tren} to obtain a graph $\G'$ topologically embedded in $Z$ such that $\G$ is a minor of $\G'$. Since $\hat{f}(E)$ $\la$-avoids $\V$ for all edges $E$ of $\G$, we have that $\hat{f}(\G)$ $\la$-avoids $\V$ hence all edges of $\G'$ avoid $\V$. It follows that $\bdry \gbar$ contains a graph $\G'$ such that $\G'$ has minor $\G$, where $\G$ is a graph of valence at most 4 contained in $\bdry G$. 
\end{proof}
\subsection{Menger curve boundaries}
This subsection proves Theorem \ref{maincorollary}. Namely, we show that if $G$ is a hyperbolic, torsion-free group such that $\bdry G$ is a Menger curve, and $g\in G$ an infinite order element, then for all large enough $n$, $\bdry \gbar$ is also a Menger curve for $\gbardef$. 

Before proceeding with the proof, we state some basic definitions and results: we first focus on formally defining the Menger universal curve and some useful properties. We proceed by mentioning results on the dimension of the boundary, which, as mentioned in the introduction, relates to cohomological properties of the group. In particular, we will discuss the cohomological dimension of the group and its Dehn fillings, stating necessary results for the proof of Theorem \ref{maincorollary}.
\subsubsection{The Menger curve}
The Menger universal curve, also called Menger sponge, was defined by Menger in 1926 \cite{menger}. It is described by the following iterative procedure. Starting with a cube, the first step is to divide it into 27 equal cubes. The next step is to remove the middle cube of each face, as well as the one in the centre of the original cube. The result is 20 small cubes. Repeat the process for each of the 20 cubes, and continue iteratively. The first two repetitions of this process are illustrated in Figure (\ref{fig:menger}).

The Menger curve is \textit{universal}, meaning that every curve is homeomorphic to a subset of the Menger curve \cite{menger}. Here, curve means 1-dimensional compact metric space. In particular, every graph is homeomorphic to a subset of the Menger curve, so it includes a copy of the graphs $K_{3,3}$ and $K_5$ (Figure \ref{fig:K33}). This, in our case, implies that there exists a copy of $K_{3,3}$ and $K_5$ in $\bdry G$.

\begin{figure}
\centering
\scalebox{1.5}{
\begin{tikzpicture}
  {\input{Figures/menger2}}
\end{tikzpicture}}
    \caption{The Menger sponge after 2 iterations.}
    \label{fig:menger}
\end{figure}
\subsubsection{Group cohomology and the dimension of the boundary} 
Before we proceed with proving Theorem \ref{maincorollary}, we first discuss the dimension of the boundary and how it relates to cohomological properties of the group. Corollary 1.4 of \cite{bestvinamess} implies that, if $G$ is a group with one-dimensional boundary, $H^k(G;\mathbb{Z}G)=0, \forall k\geq 3$. In particular, $\mathrm{ecd}(G)=2$, where $\mathrm{ecd}$ denotes the \textit{essential cohomological dimension}, in the sense of \cite{wang2021spectral}: the largest $k$ such that $H^k(G;\mathbb{Z}G)\neq 0$. Recall that, by \cite[Proposition 1.1]{bieri1978relative} we have a long exact sequence for the pair $(G,\Psub)$, where $\Psub$ is a collection of subgroups of $G$: \begin{equation}\label{leq1}
    ... \to H^{k-1}(\Psub;\mathbb{Z}G) \to H^k(G,\Psub;\mathbb{Z}G)\to H^k(G,\mathbb{Z}G)\to ...
\end{equation}
Viewing the group $\gbar$ as a Dehn filling of $G$, we can apply results on the cohomological dimension of Dehn fillings from \cite{wang2021spectral}. In particular, \cite[Corollary 1.4]{wang2021spectral} implies that, if $\mathrm{ecd(G,\Psub)}=2$, then $\mathrm{ecd}(\gbar,\hat{\Psub})\leq 2$, as long as $\gbar$ is a sufficiently long $F_\infty$ Dehn filling of the pair $(G,\Psub)$. We have the following long exact sequence for the Dehn filling pairs: \begin{equation}\label{leq2}
    \to ... H^{k-1}(\gbar,\hat{\Psub};\mathbb{Z}\gbar)\to H^k(\gbar;\mathbb{Z}\gbar) \to H^k(\hat{\Psub};\mathbb{Z}\gbar) \to ...
\end{equation}
We use this machinery and view the group pair $(\gbar,\hat{\Psub})$ as a Dehn filling of $(G,E(g))$, where $\hat{\Psub}$ consists of the subgroup $E(g)/\langle g^n\rangle$, allowing us to prove the following:
\begin{proposition}\label{cohomlemma}
    Let $G$ and $\gbar$ be hyperbolic groups as in Theorem \ref{maintheoremtorsion}. Then, if $\bdry G$ is one-dimensional, so is $\bdry \gbar$.
\end{proposition}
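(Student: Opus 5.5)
Since $\gbar$ is hyperbolic, Bestvina--Mess \cite[Corollary 1.4]{bestvinamess} gives $\dim \bdry \gbar = \mathrm{ecd}(\gbar) - 1$, where $\mathrm{ecd}(\gbar)$ is the largest $k$ with $H^k(\gbar;\mathbb{Z}\gbar)\neq 0$. So it suffices to show $\mathrm{ecd}(\gbar)=2$. We know $\mathrm{ecd}(G)=2$, and we work with the pair $(G,\Psub)$ with $\Psub=\{E(g)\}$, viewing $(\gbar,\hat{\Psub})$, $\hat{\Psub}=\{E(g)/\langle g^n\rangle\}$, as a Dehn filling (Remarks \ref{dehnsetuptf} and \ref{dehnsetup}).

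\emph{Step 1: compute the relative ecd of $(G,\Psub)$.} Apply the long exact sequence (\ref{leq1}). Since $E(g)$ is virtually $\mathbb{Z}$, it is of type $F_\infty$ and $H^k(E(g);\mathbb{Z}G)$ is a direct sum of copies of $H^k(E(g);\mathbb{Z}E(g))$ (it is finitely generated). Hence it vanishes except in degree $1$. For $k\geq 3$, both $H^{k-1}(\Psub;\mathbb{Z}G)$ and $H^k(G;\mathbb{Z}G)$ vanish, so $H^k(G,\Psub;\mathbb{Z}G)=0$. In degree $2$, the term $H^2(G,\Psub;\mathbb{Z}G)$ surjects onto $H^2(G;\mathbb{Z}G)\neq 0$, since the next term $H^2(\Psub;\mathbb{Z}G)$ is $0$. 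Thus $\mathrm{ecd}(G,\Psub)=2$.

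\emph{Step 2: pass to the filling.} $G$ is hyperbolic, hence $F_\infty$, and $E(g)$ is $F_\infty$, so the pair satisfies the finiteness hypotheses of \cite[Corollary 1.4]{wang2021spectral}. The quotient $E(g)/\langle g^n\rangle$ is finite, hence $F_\infty$. By that corollary, for sufficiently long fillings we get $\mathrm{ecd}(\gbar,\hat{\Psub})\le 2$. Sufficiently long fillings correspond to all large multiples $n$ of $k$ (or all large $n$ when $G$ is torsion-free), by the same finite-exclusion argument as in Corollary \ref{nosplitbdry}.

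\emph{Step 3: return to $\gbar$.} Use the sequence (\ref{leq2}). The peripheral subgroup of $\hat{\Psub}$ is finite, so $H^k(\hat{\Psub};\mathbb{Z}\gbar)=0$ for all $k\geq 1$; for a finite group this is coinduced cohomology, which is trivial in positive degrees. For $k\ge 3$ the sequence then gives $H^k(\gbar;\mathbb{Z}\gbar)\cong H^k(\gbar,\hat{\Psub};\mathbb{Z}\gbar)=0$. Hence $\mathrm{ecd}(\gbar)\le 2$ and $\dim\bdry\gbar\le 1$.

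For the lower bound, $\bdry \gbar$ contains an embedded graph by Theorem \ref{maintheoremtorsion}; alternatively, $\gbar$ is one-ended with no elementary splittings by Corollary \ref{nosplitbdry}, so $\bdry\gbar$ is connected and not a point. Either way $\dim\bdry\gbar\ge 1$.

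\emph{Main obstacle.} The delicate step is checking the precise hypotheses of \cite[Corollary 1.4]{wang2021spectral}: what exactly counts as an ``$F_\infty$ Dehn filling'', and whether its conclusion is stated for the relative or the absolute ecd. If it is stated for relative cohomology, Steps 1 and 3 are exactly the translations needed. I would first verify the index conventions in (\ref{leq2}), since the shift between relative and absolute degrees is where an off-by-one error could creep in.
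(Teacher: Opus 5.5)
Your proposal is correct and follows the paper's proof essentially step for step. Bestvina--Mess reduces the problem to bounding $\mathrm{ecd}(\gbar)$. The sequence (\ref{leq1}), together with $H^{k}(E(g);\mathbb{Z}G)=0$ for $k\geq 2$, controls the relative cohomology of $(G,E(g))$ as in Claim \ref{claims}(1). Wang's Corollary 1.4 passes this to $(\gbar,\hat{\Psub})$, and the vanishing of $H^k(F;\mathbb{Z}\gbar)$ for the finite peripheral subgroup (Claim \ref{claims}(2)) brings you back to $\gbar$.

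Your write-up is slightly tidier than the paper's in three respects. You use the standard indexing $H^k(\gbar,\hat{\Psub})\to H^k(\gbar)\to H^k(\hat{\Psub})$, whereas the paper's (\ref{leq2}) is indexed inconsistently with (\ref{leq1}). You verify $\mathrm{ecd}(G,\Psub)=2$ exactly, which is the hypothesis of Wang's corollary as the paper states it. And you supply the lower bound $\dim\bdry\gbar\geq 1$, which the paper leaves implicit.
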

In proving this, we make use of the following facts:
\begin{claim}\label{claims}
    Let $G$ be a hyperbolic group satisfying the assumptions of Proposition \ref{cohomlemma}. Then the following hold:
    \begin{enumerate}
        \item $H^k(G,E(g);\mathbb{Z}G)=0$ for all $k\geq 3$.
        \item For any finite subgroup $F$ of $\gbar$, $H^k(F;\mathbb{Z}\gbar)=0$ for all $k \geq 3$.
    \end{enumerate}
\end{claim}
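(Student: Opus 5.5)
We prove the two parts of Claim \ref{claims} separately. Part (1) comes from the long exact sequence (\ref{leq1}) together with the vanishing of the cohomology of $G$ and of $E(g)$ in high degrees. Part (2) comes from the standard description of cohomology of finite groups with coefficients in an induced (free) module.

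For (1), we take $\Psub=\{E(g)\}$ and look at the segment of (\ref{leq1})
\[
H^{k-1}(E(g);\mathbb{Z}G)\to H^k(G,E(g);\mathbb{Z}G)\to H^k(G;\mathbb{Z}G).
\]
By Corollary 1.4 of \cite{bestvinamess}, since $\bdry G$ is one-dimensional, $H^k(G;\mathbb{Z}G)=0$ for all $k\geq 3$. So by exactness it suffices to show $H^{k-1}(E(g);\mathbb{Z}G)=0$ for $k\geq 3$, that is, $H^j(E(g);\mathbb{Z}G)=0$ for $j\geq 2$.

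To see this, note that $E(g)$ is virtually $\mathbb{Z}$, and in particular of type $F_\infty$ (or at least $FP_\infty$). As an $E(g)$-module, $\mathbb{Z}G$ restricts to a direct sum $\bigoplus_{[G:E(g)]}\mathbb{Z}E(g)$ of free modules, indexed by cosets. Because $E(g)$ is $FP_\infty$, its cohomology commutes with direct sums, so
\[
H^j(E(g);\mathbb{Z}G)\cong\bigoplus H^j(E(g);\mathbb{Z}E(g)).
\]
For a virtually cyclic infinite group, $H^j(E(g);\mathbb{Z}E(g))$ is nonzero only for $j=1$: it equals $H^j(C;\mathbb{Z}C)$ for a finite-index cyclic subgroup $C$ via Shapiro's lemma and finite-index restriction, and $\mathbb{Z}$ has cohomological dimension $1$ with ends computing $H^1$. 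Hence it vanishes for $j\geq 2$, and (1) follows. We also note that $H^0$ vanishes since $E(g)$ is infinite, though this is not needed.

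For (2), let $F\leq\gbar$ be finite. Then $\mathbb{Z}\gbar$ restricted to $F$ is the free module $\bigoplus_{F\backslash\gbar}\mathbb{Z}F$. Since $F$ is finite, it is of type $FP_\infty$ over $\mathbb{Z}$, with a finite-type resolution, so cohomology again commutes with this direct sum. It therefore suffices that $H^k(F;\mathbb{Z}F)=0$ for $k\geq 1$. This is the standard fact that $\mathbb{Z}F\cong\mathrm{Hom}_\mathbb{Z}(\mathbb{Z}F,\mathbb{Z})$ is coinduced, so by Shapiro's lemma $H^k(F;\mathbb{Z}F)\cong H^k(\{1\};\mathbb{Z})=0$ for $k\geq1$. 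In particular the group vanishes for $k\geq 3$.

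The main technical point, and the place where care is needed, is the interchange of cohomology with infinite direct sums. This requires the finiteness property $FP_\infty$ for $E(g)$ and for $F$, which holds since both are virtually cyclic or finite. If one prefers to avoid this, one can instead use Shapiro's lemma to identify $H^j(E(g);\mathbb{Z}G)$ with cohomology of $G$ with coefficients in a coinduced module and argue directly. The first route is cleaner.
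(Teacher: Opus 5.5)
Your proof is correct and follows essentially the same route as the paper: the long exact sequence (\ref{leq1}) together with Bestvina--Mess for $H^k(G;\mathbb{Z}G)$, then the vanishing of $H^{k-1}(E(g);\mathbb{Z}G)$ by writing $\mathbb{Z}G$ as a direct sum of copies of $\mathbb{Z}E(g)$. From there, the finiteness property of $E(g)$ lets cohomology commute with that sum (the content of \cite[Lemma 13.10.3]{topmethods}, which the paper cites), finite-index invariance reduces to $\mathbb{Z}$, and part (2) is handled by the same free-module argument; your explicit Shapiro's lemma computation of $H^k(F;\mathbb{Z}F)=0$ for $k\geq 1$ fills in the step the paper leaves as ``by the same argument''.
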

\begin{proof} Firstly, we note that, by \cite[Corollary 1.4]{bestvinamess}, $H^k(G;\mathbb{Z}G)=0$, because of the assumption that $\bdry G$ is one-dimensional.
    \begin{enumerate}
        \item By the long exact sequence in Equation (\ref{leq1}), letting $\Psub =E(g)$, it suffices to show that $H^{k-1}(E(g);\mathbb{Z}G)=0$. To see this, it is enough to show $H^{k-1}(E(g);\mathbb{Z}E(g))=0$; indeed, by \cite[Lemma 13.10.3]{topmethods}, if $H<G$ is a subgroup of type $F_\infty$ and $A$ is a free $\mathbb{Z}H$-module, $H^n(H;\mathbb{Z}H)=0$, implies $H^n(H;A)=0$ for any $n$. To use this result, note that $E(g)$ is of type $F_\infty$ since it is hyperbolic. Moreover, $\mathbb{Z}G$ is indeed a free $\mathbb{Z}E(g)$-module: this is because we can view $\mathbb{Z}G$ as the direct sum over all cosets of $E(g)$ in $G$: $\mathbb{Z}G=\oplus_{\mathrm{cosets}} \mathbb{Z}E(g)$. Now, to show that $H^{k-1}(E(g);\mathbb{Z}E(g))=0$, we use the fact that 
        the cohomology of a group and a finite index subgroup are isomorphic (see, for instance, \cite[Corollary 1.7.10]{loh2019group}). Since $E(g)$ is elementary, in particular it is virtually $\mathbb{Z}$, and $\mathbb{Z}$ has essential cohomological dimension one. Hence $H^k(E( g);\mathbb{Z}E(g))=0, \forall k\geq 2$, implying that $H^{k-1}(E(g);\mathbb{Z}G)=0$. Hence by Equation (\ref{leq1}), $H^k(G,E(g);\mathbb{Z}G)=0$, which proves the claim.
        \item  Since $F$ is a finite group, in particular it is of type $F_\infty$, and $\gbar$ is of type $F_\infty$ since it is hyperbolic. By the same argument as for the subgroup $E(g)$ above, we deduce that $H^k(F;\mathbb{Z}\gbar)=0, \forall k\geq 3$.
    \end{enumerate}
\end{proof}
\begin{proof}[Proof of Proposition \ref{cohomlemma}]
    By \cite[Corollary 1.4]{bestvinamess}, $\bdry G$ being one-dimensional implies that $\mathrm{ecd}(G)=2$. By the same result, in order to show that $\bdry \gbar$ is one-dimensional, it suffices to show that $\mathrm{ecd}(\gbar)=2$. By \cite[Corollary 1.4]{wang2021spectral}, we know that $\mathrm{ecd}(\gbar,\hat{\Psub})\leq 2$ for all sufficiently long $F_\infty$ Dehn fillings of $G$. By part (1) of Claim \ref{claims}, $H^k(\gbar,F;\mathbb{Z}\gbar)=0$. Therefore, in view of Equation (\ref{leq2}), it suffices to show that $H^k(F;\mathbb{Z}\gbar)=0$ for all $k \geq 3$, which is exactly the statement of part (2) of Claim \ref{claims}.
\end{proof}
\subsubsection{Hyperbolic groups with Menger curve boundary and the proof of Theorem \ref{maincorollary}}
In this subsection, we will combine Theorem \ref{maintheoremtorsion} and Proposition \ref{cohomlemma}, to show that, if we start with a one-ended hyperbolic group with Menger curve boundary, then the quotient group $\gbar$ also has Menger curve boundary. We first state a more general version which also holds for torsion groups. This result follows as a corollary of Theorem \ref{maintheoremtorsion}:
\begin{theorem}\label{maincortorsion}
    Let $G$ be a hyperbolic group with Menger curve boundary. Let $g\in G$ be an infinite order element.
    Then there exists $k> 0$ such that for all multiples $n$ of $k$, $\gbar = G\big/ \langle\langle g^n \rangle \rangle$ is a hyperbolic group with Menger curve boundary.

    If, in addition, $G$ is torsion-free, then the above holds for all large enough $n$.
\end{theorem}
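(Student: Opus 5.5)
The plan is to assemble the Menger curve conclusion from three ingredients: the absence of elementary splittings in $\gbar$, one-dimensionality of $\bdry\gbar$, and non-planarity of $\bdry\gbar$. Then I will invoke the Kapovich--Kleiner trichotomy \cite[Theorem 4]{kapovichkleiner} to conclude.

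\textbf{Step 1: hypotheses on $G$.} Since $\bdry G$ is a Menger curve, it is connected, locally connected, and has no local or global cut points. It is also non-planar, so $G$ is not virtually Fuchsian, because virtually Fuchsian groups have planar boundary. Theorem \ref{gm7.9} then gives that $G$ has no elementary splitting. In particular $G$ does not split over a finite group, so $G$ is one-ended by Stallings' theorem. Thus $G$ satisfies the hypotheses of Theorem \ref{maintheoremtorsion}.

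\textbf{Step 2: choice of $k$.} Corollary \ref{nosplitbdry}, Theorem \ref{maintheoremtorsion} (applied to both $K_{3,3}$ and $K_5$, or just one of them) and Proposition \ref{cohomlemma} each produce a $k$, or a threshold in the torsion-free case. Each of them also excludes only finitely many $n$ among the admissible ones, since they come from ``sufficiently long filling'' statements. I will take $k$ to be a common multiple of the individual $k$'s, enlarged by passing to a further multiple so that every excluded finite set is avoided. In the torsion-free case I instead take $n$ larger than the maximum of the thresholds. With this choice, for every relevant $n$ the group $\gbar$ is hyperbolic and has no non-trivial elementary splitting.

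\textbf{Step 3: properties of $\bdry\gbar$.} By Menger universality, $K_{3,3}$ embeds in $\bdry G$, and it has valence $3\le 4$. Theorem \ref{maintheoremtorsion} then yields a graph $\hat\G$ embedded in $\bdry\gbar$ that contains $K_{3,3}$ as a minor. By Kuratowski's theorem $\hat\G$ is non-planar, so $\bdry\gbar$ is non-planar. In the valence-3 case, the Remark after Proposition \ref{tren} even gives $K_{3,3}$ itself. Proposition \ref{cohomlemma} gives that $\bdry\gbar$ is one-dimensional.

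\textbf{Step 4: applying the trichotomy.} To apply \cite[Theorem 4]{kapovichkleiner}, $\bdry\gbar$ must be one-dimensional and $\gbar$ must have no elementary splitting. Both hold, since non-elementarity of $\gbar$ follows from the one-dimensional boundary. The theorem then says $\bdry\gbar$ is a Menger curve, a Sierpi\'nski carpet, or $\mathbb S^1$. The last two are planar, so $\bdry\gbar$ is a Menger curve.

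\textbf{Main obstacle.} The delicate point is the bookkeeping of Step 2: arranging a single $k$ that is compatible with all three results, each of which holds only for multiples of its own $k$ and only for sufficiently long fillings. I also need to check that Proposition \ref{cohomlemma}'s ``sufficiently long $F_\infty$ filling'' condition is met by the chosen $n$. The plan is to handle this in the same way as in the proof of Corollary \ref{nosplitbdry}, taking a large multiple that escapes the finite exceptional set.
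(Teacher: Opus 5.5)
Your proposal is correct and follows the paper's proof essentially step for step: Theorem \ref{gm7.9} shows that $G$ has no elementary splitting, Corollary \ref{nosplitbdry} transfers this to $\gbar$, Proposition \ref{cohomlemma} gives one-dimensionality, Theorem \ref{maintheoremtorsion} applied to $K_{3,3}$ gives non-planarity, and Kapovich--Kleiner finishes. Two of your checks are slightly more careful than the paper, which simply takes a large enough multiple of the $k$ from Remark \ref{dehnsetup}: your choice of a single common multiple $k$ exceeding every excluded value, so that all its multiples are admissible, and your verification that $G$ is not virtually Fuchsian before invoking Theorem \ref{gm7.9}.
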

\begin{proof}[Proof of Theorem \ref{maincortorsion}] 
Let $(Z,\rho)$ denote $\bdry G$ with the usual visual metric on the boundary (described in Section \ref{bdrymapsection}). Let $\gbar=G\big/ \langle\langle g^n\rangle\rangle$ for any large enough multiple $n$ of $k$, where $k$ is set as in Remark \ref{dehnsetup}. In the special case where $G$ is torsion-free, the proof below holds for all large enough $n$, by Remark \ref{dehnsetuptf}. \\\\
{\textit{Hyperbolicity and splittings.}} As seen in Theorem \ref{maintheoremtorsion}, $\gbar$ is hyperbolic. We claim that $\gbar$ admits no non-trivial elementary splittings. Since the Menger curve is connected and has no cut points (see, for instance, \cite[Theorem XII]{anderson1958}), then by Corollary \ref{gm7.9}, $G$ admits no elementary splittings. Applying Corollary \ref{nosplitbdry}, it follows that $\gbar$ also admits no elementary splittings.
\\\\
{\textit{Dimension of the boundary.}} Since $Z$ is a Menger curve, it is one-dimensional \cite{anderson1958}. By Proposition \ref{cohomlemma}, the boundary $\bdry \gbar$ is also one-dimensional.
\\\\
{\textit{Non-planarity.}} Finally, it remains to show that the boundary of $\gbar$ is non-planar. By Theorem \ref{maintheoremtorsion} and universality of the Menger curve, $\bdry \gbar$ contains a graph with $K_{3,3}$ minor. $K_{3,3}$ is non-planar, hence the graph with $K_{3,3}$ minor is itself non-planar by Kuratowski's theorem. Thus $\bdry \gbar$ is not planar by \cite[Result C]{claytor1934topological}, since it contains a non-planar graph. Note, the same argument holds if one uses $K_5$ rather than $K_{3.3}$. Since $\gbar$ has no elementary splittings and one-dimensional boundary, by \cite[Theorem 4]{kapovichkleiner}, $\bdry \gbar$ is a Menger curve. 
\end{proof}
\subsection{Non-planar boundaries}
Theorem \ref{maintheoremtorsion} has strong implications on the topology of the boundary of $\gbar$, since it shows that if a non-planar graph exists in the original boundary, then one exists in the quotient boundary. It turns out that this is a sufficient condition to assert non-planarity of the quotient boundary in a more general case than that of the Menger curve; even if the boundary is not one-dimensional, as long as the group $G$ is one-ended, the existence of a non-planar graph implies that the boundary $\bdry \gbar$ is non planar. 

As before, we state a version of Theorem \ref{generalnonplanar} that accounts for torsion groups:
\begin{corollary}[Corollary \ref{generalnonplanar}]\label{corctorsion}
Let $G$ be a one-ended hyperbolic group with non-planar boundary. Let $g\in G$ be an infinite order element. Then there exists $k> 0$ such that for all multiples $n$ of $k$, the hyperbolic group $\gbar = G\big/ \langle\langle g^n\rangle\rangle$ also has non-planar boundary.

If, in addition, $G$ is torsion-free, then the above holds for all large enough $n$.
\end{corollary}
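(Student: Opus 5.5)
The plan is to reduce the corollary to the graph statement of Theorem \ref{maintheoremtorsion}, applied with $\G=K_{3,3}$ or $\G=K_5$. The one genuinely new ingredient is the case where $G$ does split over an elementary subgroup, which that theorem does not cover.

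\emph{Step 1: finding a non-planar graph in $\bdry G$.} Since $G$ is one-ended, $\bdry G$ is connected and locally connected \cite{bestvinamess, bowditch1999treelike, levitt1998non, swarup1996cut}, and it has no global cut points \cite{bowditch1998cut,swarup1996cut}. By \cite[Page 835]{claytor1934topological}, such a continuum is non-planar if and only if it contains a non-planar graph. Any non-planar graph contains a subdivision of $K_{3,3}$ or $K_5$ by Kuratowski's theorem. Hence $\bdry G$ contains a topologically embedded copy of $K_{3,3}$ or $K_5$, and both are simplicial graphs of valence at most $4$.

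\emph{Step 2: the quotient side.} Fix $k$ as in Remark \ref{dehnsetup}, and take $n$ a large multiple of $k$ (any large $n$ in the torsion-free case, by Remark \ref{dehnsetuptf}). Then $\gbar$ is hyperbolic \cite{delzant, olshanskii, gromov1987hyperbolic}. Suppose we produce a graph $\hat\G\subset\bdry\gbar$ with $K_{3,3}$ (or $K_5$) as a minor. Then $\hat\G$ is non-planar by Kuratowski's theorem. A subspace of $\mathbb{S}^2$ cannot contain a non-planar graph, so $\bdry\gbar$ is non-planar. This direction needs no cut-point or dimension information about $\bdry\gbar$, so neither one-endedness of $\gbar$ nor Proposition \ref{cohomlemma} is required.

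\emph{Step 3: producing $\hat\G$.} The finite set of excluded exponents must be avoided as in the proof of Corollary \ref{nosplitbdry}. I split into two cases.

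\textbf{Case A: $G$ admits no non-trivial elementary splitting.} Then Theorem \ref{maintheoremtorsion} applies verbatim to the graph from Step 1.

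\textbf{Case B: $G$ admits a non-trivial elementary splitting.} Here I would rerun the proof of Theorem \ref{maintheoremtorsion}, checking where the no-splitting hypothesis enters. It is used in exactly one place: Corollary \ref{Egconditions}, through Theorem \ref{gm7.9}, to get porosity and avoidability of the obstacles $\V$ from Proposition \ref{lem:mssep}. The hyperbolicity of $\gbar$, the separation of $\V$, Lemma \ref{transversal}, Proposition \ref{qgeodesictransversal}, Proposition \ref{prop:bdrymap}, Lemma \ref{lem:creatine} and Proposition \ref{tren} do not use it.

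My approach is to localise the Mackay--Sisto argument to a neighbourhood of the embedded graph. Away from the limit sets of edge groups of the JSJ decomposition, the boundary is locally cut-point free. I would first isotope the copy of $K_{3,3}$ or $K_5$ so that its vertices avoid the (countable, nowhere dense) set of local cut points, using Lemma \ref{endpoints}-type perturbations. Then only the edges ever pass through local cut points. When an edge crosses a local cut pair $\{hg^{\pm\infty}\}$ that is itself an obstacle, I would reroute it inside the adjacent piece of Bowditch's JSJ tree. A point of $V$ which is a local cut point has at least two local complementary components on each side, and every piece is non-degenerate because $G$ is one-ended. Avoidability should then survive in the weaker form needed by Lemma \ref{quasiarcs}: the shifted arc only has to stay in the same complementary components and need not $\e$-follow across the cut pair.

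\emph{Main obstacle.} I expect the main obstacle to be Case B. There, Proposition \ref{lem:mssep} is not available as stated, and one must verify uniform porosity and avoidability constants relative to a fixed compact graph rather than the whole boundary. If the local rerouting fails, my fallback is to replace $G$ by a quasiconvex rigid vertex group of its JSJ decomposition. Such a group has no elementary splitting, and I would need it to carry a non-planar subgraph of $\bdry G$. I would then check that its Dehn filling by the induced power of $g$ embeds in $\bdry\gbar$ via Proposition \ref{prop:bdrymap}. This fallback is where I am least certain, since non-planarity could a priori arise only from gluing planar pieces.
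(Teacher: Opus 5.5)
Your Case A is the paper's proof. Claytor plus Kuratowski give a topologically embedded $K_{3,3}$ or $K_5$ in $\bdry G$, and Theorem \ref{maintheoremtorsion} carries a graph with that minor into $\bdry\gbar$. You improve on it in two small ways:
\begin{itemize}
\item you make explicit that $\G$ can be taken of valence at most $4$, whereas the paper applies the theorem to an arbitrary non-planar graph;
\item you note that containing a non-planar graph already rules out an embedding in $\mathbb{S}^2$, so the paper's check that $\bdry\gbar$ is connected without cut points is superfluous.
\end{itemize}
You are also right that the corollary's hypotheses do not exclude elementary splittings. The paper handles this by asserting, via Theorem \ref{gm7.9}, that $G$ has none. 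That theorem needs the absence of \emph{local} cut points, and one-endedness only excludes global ones; the group $G$ constructed below is one-ended, has non-planar boundary, and splits over $\mathbb{Z}$. So what the paper actually establishes is your Case A.

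The gap is Case B, and localising Mackay--Sisto cannot close it.
\begin{itemize}
\item Your picture of the local cut points is wrong. Once $G$ has a two-ended splitting, that set is $G$-invariant and non-empty, hence dense in $\bdry G$, not nowhere dense.
\item More seriously, $g$ is arbitrary. If $E(g)=\langle g\rangle$ is the edge group of a splitting $G=A*_{\langle g\rangle}B$, then every $V=\{hg^{\pm\infty}\}$ is a cut pair. An edge of your graph whose endpoints lie in different components of $\bdry G\setminus V$ meets $V$ however it is rerouted.
\item Dropping $\e$-following also forfeits the control of intersections that Lemma \ref{lem:creatine} and Proposition \ref{tren} rely on.
\item Correspondingly, for long fillings $\gbar\cong\bar A*_{\mathbb{Z}/n}\bar B$ with $\bar A=A/\langle\langle g^n\rangle\rangle$. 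So $\bdry\gbar$ is disconnected, and any graph in it lies in the boundary of a single filled piece.
\end{itemize}
A localised argument may work when $g$ lies in a rigid vertex group, but you do not carry it out.

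Non-planarity need not live in a single piece. Let $R$ be the fundamental group of a compact hyperbolic $3$-manifold with totally geodesic boundary, so $\bdry R$ is a Sierpi\'{n}ski carpet. Let $c\in R$ be primitive and not conjugate into a boundary subgroup, and let $G$ be the double of $R$ along $\langle c\rangle$.
\begin{itemize}
\item $\bdry G$ contains $\bdry R$ together with an arc from $c^{+\infty}$ to $c^{-\infty}$ running through the other copy and meeting $\bdry R$ only at its endpoints.
\item The points $c^{\pm\infty}$ do not lie on a common peripheral circle; otherwise $c$ would preserve the corresponding boundary plane.
\item In any planar embedding of a carpet, the complementary domains are bounded by peripheral circles. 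Hence this subspace, and so $\bdry G$, is non-planar.
\end{itemize}
But for $g=c$, every non-planar graph in $\bdry\gbar$ must sit in a translate of $\bdry(R/\langle\langle c^n\rangle\rangle)$, the boundary of a filling of a carpet group. That is exactly the situation the paper leaves open in its second open question. So your fallback is not merely uncertain: Case B needs a genuinely new idea, which neither your sketch nor the paper supplies.
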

\begin{proof}
    Since $G$ is one-ended, by \cite{bestvinamess, bowditch1999treelike, levitt1998non,swarup1996cut} $\bdry G$ is locally connected. In addition, by \cite{bowditch1998cut, swarup1996cut} $\bdry G$ has no (global) cut points. 

    Hence, we can apply the result of \cite[Page 835]{claytor1934topological}; in particular, this implies that, since $\bdry G$ is connected with no cut points, it is non-planar if and only if it contains a non-planar graph, say $\G$. 
    
    By Kuratowski's theorem, a graph is non-planar if and only if it contains either $K_{3,3}$ or $K_5$ as a minor. Hence, any graph containing $\G$ as a minor will also contain $K_{3,3}$ or $K_5$ as a minor, and will itself be non-planar. Therefore, we will show that $\bdry \gbar$ contains a graph with $\G$ as a minor, hence it contains a non-planar graph, and that it is connected with no cut points and no non-trivial elementary splittings. This will imply that it is itself non-planar by \cite[Page 835]{claytor1934topological}. 

    Applying Theorem \ref{maincortorsion} implies that there exists a graph $\G'$ which contains $\G$ as a minor. To see that $\bdry \gbar$ is also connected with no cut points, recall that by Corollary \ref{gm7.9}, $\bdry G$ has no non-trivial elementary splittings. Note that $G$ is not virtually Fuchsian since its boundary is non-planar by assumption. Hence $\bdry \gbar$ has no non-trivial elementary splittings by Corollary \ref{nosplitbdry}. In order to see that it has no cut points and is connected, we note that $\gbar$ is also not virtually Fuchsian; indeed, since $\bdry \gbar$ contains the non-planar graph $\G$, it cannot be homeomorphic to $\mathbb{S}^1$.

\end{proof}
\bibliography{references.bib}

@article{mackaysisto,
  title = {Quasi-Hyperbolic Planes in Relatively Hyperbolic Groups},
  author = {Mackay, John M. and Sisto, Alessandro},
  year = 2020,
  month = jan,
  journal = {Annales Academiae Scientiarum Fennicae Mathematica},
  volume = {45},
  number = {1},
  pages = {139--174},
  issn = {1239629X, 17982383},
  doi = {10.5186/aasfm.2020.4511},
  urldate = {2025-11-27},
  langid = {english}
}

@article{mackay,
  title = {Existence of Quasi-Arcs},
  author = {Mackay, John M.},
  year = 2008,
  month = jun,
  journal = {Proceedings of the American Mathematical Society},
  volume = {136},
  number = {11},
  eprint = {0708.0824},
  primaryclass = {math},
  pages = {3975--3981},
  issn = {0002-9939, 1088-6826},
  doi = {10.1090/S0002-9939-08-09444-6},
  urldate = {2025-12-29},
  archiveprefix = {arXiv},
  langid = {english}
}

@article{osin2006elementary,
  title={Elementary subgroups of relatively hyperbolic groups and bounded generation},
  author={Osin, Denis V.},
  journal={International Journal of Algebra and Computation},
  volume={16},
  number={01},
  pages={99--118},
  year={2006},
  publisher={World Scientific}
}

@article{anderson1958,
  title={One-dimensional continuous curves and a homogeneity theorem},
  author={Anderson, Richard D.},
  journal={Annals of Mathematics},
  volume={68},
  number={1},
  pages={1--16},
  year={1958},
  publisher={JSTOR}
}

@article{grovesmanning,
  title={Dehn fillings and elementary splittings},
  author={Groves, Daniel and Manning, Jason F.},
  journal={Transactions of the American Mathematical Society},
  volume={370},
  number={5},
  pages={3017--3051},
  year={2018}
}

@article{kapovichkleiner,
  title={Hyperbolic groups with low-dimensional boundary},
  author={Kapovich, Michael and Kleiner, Bruce},
  journal={Annales scientifiques de l'Ecole normale sup{\'e}rieure},
  volume={33},
  number={5},
  pages={647--669},
  year={2000}
}

@article{petrosyan2024cohomology,
  title={Cohomology of group theoretic {Dehn} fillings {II}},
  author={Petrosyan, Nansen and Sun, Bin},
  journal={Advances in Mathematics},
  volume={437},
  pages={109412},
  year={2024},
  publisher={Elsevier}
}

@article{bestvinamess,
  title={The boundary of negatively curved groups},
  author={Bestvina, Mladen and Mess, Geoffrey},
  journal={Journal of the American Mathematical Society},
  volume={4},
  number={3},
  pages={469--481},
  year={1991},
  publisher={JSTOR}
}

@book{bowditch1999treelike,
  title={Treelike structures arising from continua and convergence groups},
  author={Bowditch, Brian H.},
  journal={},
  volume={662},
  year={1999},
  publisher={American Mathematical Soc.}
}

@article{levitt1998non,
  title={Non-nesting actions on real trees},
  author={Levitt, Gilbert},
  journal={Bulletin of the London Mathematical Society},
  volume={30},
  number={1},
  pages={46--54},
  year={1998},
  publisher={Cambridge University Press}
}

@article{swarup1996cut,
  title={On the cut point conjecture},
  author={Swarup, Gadde},
  journal={Electronic Research Announcements of the American Mathematical Society},
  volume={2},
  number={2},
  pages={98--100},
  year={1996}
}

@article{mackaysisto2024maps,
  title={Maps between relatively hyperbolic spaces and between their boundaries},
  author={Mackay, John M. and Sisto, Alessandro},
  journal={Transactions of the American Mathematical Society},
  volume={377},
  number={02},
  pages={1409--1454},
  year={2024}
}

@article{sisto2012metric,
  title={On metric relative hyperbolicity},
  author={Sisto, Alessandro},
  journal={arXiv preprint arXiv:1210.8081},
  year={2012}
}

@article{claytor1934topological,
  title={Topological immersion of Peanian continua in a spherical surface},
  author={Claytor, Schiefelin},
  journal={Annals of Mathematics},
  volume={35},
  number={4},
  pages={809--835},
  year={1934},
  publisher={JSTOR}
}

@article{wang2021spectral,
  title={A spectral sequence for {Dehn} fillings},
  author={Wang, Oliver H.},
  journal={Algebraic \& Geometric Topology},
  volume={21},
  number={5},
  pages={2257--2272},
  year={2021},
  publisher={Mathematical Sciences Publishers}
}

@book{topmethods,
  title={Topological methods in group theory},
  author={Geoghegan, Ross},
  year={2008},
  publisher={Springer}
}

@article{loh2019group,
  title={Group cohomology},
  author={L{\"o}h, Clara},
  journal={Lecture notes http://www. mathematik. uni-regensburg. de/loeh/teaching/grouphom\_ss19},
  year={2019}
}

@article{bieri1978relative,
  title={Relative homology and {P{oincar{\'e}}} duality for group pairs},
  author={Bieri, Robert and Eckmann, Beno},
  journal={Journal of Pure and Applied Algebra},
  volume={13},
  number={3},
  pages={277--319},
  year={1978},
  publisher={North-Holland}
}

@article{delzant,
author = {Thomas Delzant},
title = {Sous-groupes distingu\'es et quotients des groupes hyperboliques},
volume = {83},
journal = {Duke Mathematical Journal},
number = {3},
publisher = {Duke University Press},
pages = {661 -- 682},
year = {1996},
doi = {10.1215/S0012-7094-96-08321-0},
URL = {https://doi.org/10.1215/S0012-7094-96-08321-0}
}

@article{gromov1987hyperbolic,
title={Hyperbolic Groups},
author={Gromov, Mikhael},
editor={Gersten, S. M.},
journal={Essays in Group Theory},
year={1987},
publisher={Springer New York},
pages={75--263},
}

@article{olshanskii,
  title={Almost every group is hyperbolic},
  author={Ol'Shanskii, Alexander Yu.},
  journal={International Journal of Algebra and Computation},
  volume={2},
  number={01},
  pages={1--17},
  year={1992},
  publisher={World Scientific Publishing Company}
}

@ARTICLE{dgp,
       author = {{Dahmani}, Fran{\c{c}}ois and {Guirardel}, Vincent and {Przytycki}, Piotr},
        title = {No-splitting property and boundaries of random groups},
      journal = {arXiv e-prints},
         year = {2009},
          doi = {10.48550/arXiv.0904.3854},
archivePrefix = {arXiv},
       eprint = {0904.3854}
}

@article{champetier1995proprietes,
  title={Propri{\'e}t{\'e}s statistiques des groupes de pr{\'e}sentation finie},
  author={Champetier, Christophe},
  journal={Advances in Mathematics},
  volume={116},
  number={2},
  pages={197--262},
  year={1995},
  publisher={New York: Academic Press, 1965-}
}

@article{danielski2022right,
  title={Right-angled Coxeter groups with Menger curve boundary},
  author={Danielski, Daniel},
  journal={Bulletin of the London Mathematical Society},
  volume={54},
  number={3},
  pages={977--995},
  year={2022},
  publisher={Wiley Online Library}
}

@article{Kuratowski1930,
author = {Kuratowski, Casimir},
journal = {Fundamenta Mathematicae},
language = {fre},
number = {1},
pages = {271-283},
title = {Sur le problème des courbes gauches en Topologie},
url = {http://eudml.org/doc/212352},
volume = {15},
year = {1930},
}

@article{benakliPolyedresAvecNombre1993,
  title = {Poly{\`e}dres Avec Un Nombre Fini de Types de Sommets},
  author = {Benakli, Nadia},
  year = 1993,
  journal = {Comptes rendus de l'Acad{\'e}mie des sciences. S{\'e}rie 1, Math{\'e}matique},
  volume = {317},
  number = {9},
  pages = {863--866},
  publisher = {Elsevier},
  address = {Paris},
  issn = {0764-4442},
}

@article{dymaraBoundariesRightangledHyperbolic2007,
  title = {Boundaries of Right-Angled Hyperbolic Buildings},
  author = {Dymara, Jan and Osajda, Damian},
  year = 2007,
  journal = {Fundamenta Mathematicae},
  volume = {197},
  pages = {123--165},
  issn = {0016-2736, 1730-6329},
  doi = {10.4064/fm197-0-6},
  urldate = {2026-06-12},
  langid = {english}
}

@article{bowditch1998cut,
  title={Cut points and canonical splittings of hyperbolic groups},
  author={Bowditch, Brian H.},
  journal={Acta Math},
  volume={180},
  number={145},
  pages={186},
  year={1998}
}

@article{bowditch2012relatively,
  title={Relatively hyperbolic groups},
  author={Bowditch, Brian H.},
  journal={International Journal of Algebra and Computation},
  volume={22},
  number={03},
  pages={1250016},
  year={2012},
  publisher={World Scientific}
}

@article{grovesDehnFillingRelatively2008,
  title = {Dehn Filling in Relatively Hyperbolic Groups},
  author = {Groves, Daniel and Manning, Jason F.},
  year = 2008,
  month = nov,
  journal = {Israel Journal of Mathematics},
  volume = {168},
  number = {1},
  pages = {317--429},
  issn = {1565-8511},
  doi = {10.1007/s11856-008-1070-6},
  urldate = {2026-07-22},
  langid = {english}
}

@article{osinPeripheralFillingsRelatively2007,
  title = {Peripheral Fillings of Relatively Hyperbolic Groups},
  author = {Osin, Denis V.},
  year = 2007,
  month = feb,
  journal = {Inventiones mathematicae},
  volume = {167},
  number = {2},
  pages = {295--326},
  issn = {1432-1297},
  doi = {10.1007/s00222-006-0012-3},
  urldate = {2026-07-22},
  langid = {english}
}

@article{drutusapir,
  title = {Tree-Graded Spaces and Asymptotic Cones of Groups},
  author = {Dru{\c t}u, Cornelia and Sapir, Mark},
  year = 2005,
  month = sep,
  journal = {Topology},
  volume = {44},
  number = {5},
  pages = {959--1058},
  issn = {00409383},
  doi = {10.1016/j.top.2005.03.003},
  urldate = {2026-09-09},
  copyright = {https://www.elsevier.com/tdm/userlicense/1.0/},
  langid = {english}
}

@article{bonkschramm,
  title = {Embeddings of {{Gromov}} Hyperbolic Spaces},
  author = {Bonk, Mario and Schramm, Oded},
  year = 2000,
  month = jun,
  journal = {Geometric \& Functional Analysis GAFA},
  volume = {10},
  number = {2},
  pages = {266--306},
  issn = {1420-8970},
  doi = {10.1007/s000390050009},
  urldate = {2026-09-14},
  langid = {english}
}

@article{menger,
 author = {Menger, Karl},
 title = {Allgemeine {R{\"a}ume} und {Cartesische} {R{\"a}ume}. {II}: {{\"U}ber} umfassendste {{\(n\)}}-dimensionale {Mengen}.},
 fjournal = {Proceedings. Akadamie van Wetenschappen Amsterdam},
 journal = {Proc. Akad. Wet. Amsterdam},
 issn = {0370-0348},
 volume = {29},
 pages = {1125--1128},
 year = {1926},
 language = {German},
 zbMATH = {2587559},
 JFM = {52.0595.01}
}
\bibliographystyle{alpha}
\end{document}